\newcommand{\Q}{\mathbf{Q}}
\newcommand{\C}{\mathbf{C}}
\newcommand{\N}{\mathbf{N}}
\newcommand{\Z}{\mathbf{Z}}
\renewcommand{\O}{\mathscr{O}}
\newcommand{\R}{\mathbf{R}}
\newcommand{\Hyp}{\mathbb{H}}
\newcommand{\Sm}{\mathsf{Sm}}
\newcommand{\Smf}{\mathsf{Smf}}
\newcommand{\sS}{\mathsf{S}}
\newcommand{\Ch}{\mathsf{Ch}}
\newcommand{\Sch}{\mathsf{Sch}}
\newcommand{\op}{\mathrm{op}}
\DeclareMathOperator{\Hom}{Hom}
\DeclareMathOperator{\GL}{GL}
\DeclareMathOperator{\Gal}{Gal}
\DeclareMathOperator{\id}{id}
\DeclareMathOperator{\pr}{pr}
\newcommand{\rel}{\mathrm{rel}}
\DeclareMathOperator{\Spec}{Spec}
\DeclareMathOperator{\Sp}{Sp}
\DeclareMathOperator{\Spwf}{Spwf}
\DeclareMathOperator{\Tot}{Tot}
\DeclareMathOperator{\reg}{reg}
\newcommand{\Fil}{F}
\DeclareMathOperator{\ch}{ch}
\DeclareMathOperator{\Cone}{Cone}
\DeclareMathOperator{\MF}{mf}
\renewcommand{\top}{\mathrm{top}}
\newcommand{\rig}{\mathrm{rig}}
\newcommand{\syn}{\mathrm{syn}}
\newcommand{\dR}{\mathrm{dR}}
\newcommand{\et}{\mathrm{\acute{e}t}}
\DeclareMathOperator{\spec}{sp}
\DeclareMathOperator{\Gd}{Gd}
\DeclareMathOperator{\diag}{diag}
\DeclareMathOperator{\sgn}{sgn}
\DeclareMathOperator*{\holim}{holim}
\newcommand{\del}{\partial}
\newcommand{\ol}[1]{\overline{#1}}
\newcommand{\ul}[1]{\underline{#1}}
\newcommand{\<}{\langle}
\renewcommand{\>}{\rangle}
\renewcommand{\dot}{\bullet}
\newcommand{\qis}{\simeq}
\newtheorem{thm}[equation]{Theorem}
\newtheorem*{thm*}{Theorem}
\newtheorem{cor}[equation]{Corollary}
\newtheorem*{cor*}{Corollary}
\newtheorem{lemma}[equation]{Lemma}
\newtheorem{prop}[equation]{Proposition}
\theoremstyle{definition}
\newtheorem{dfn}[equation]{Definition}
\newtheorem*{dfn*}{Definition}
\theoremstyle{remark}
\newtheorem{rem}[equation]{Remark}
\newtheorem*{rem*}{Remark}
\newtheorem{rems}[equation]{Remarks}
\newtheorem*{ex*}{Example}
\newtheorem{Nr}[equation]{}
\numberwithin{equation}{section}
\title[Karoubi's relative Chern character and the syntomic regulator]{Karoubi's relative Chern character, the rigid syntomic regulator, and the Bloch-Kato exponential map}
\author{Georg Tamme}
\address{Fakult\"at f\"ur Mathematik, Universit\"at Regensburg, 93040 Regensburg, Germany}
\email{georg.tamme@ur.de}
\urladdr{mathematik.uni-r.de/tamme}
\date{\today}
\begin{document}
\begin{abstract}
We construct a variant of Karoubi's relative Chern character for smooth, separated schemes over the ring of integers in a $p$-adic field and prove a comparison with the rigid syntomic regulator. For smooth projective schemes we further relate the relative Chern character to the \'etale $p$-adic regulator via the Bloch-Kato exponential map. This reproves a result of Huber and Kings for the spectrum of the ring of integers and generalizes it to all smooth projective schemes as above.
\end{abstract}

\maketitle

\tableofcontents

\section*{Introduction}

If $A$ is a Banach algebra one can view it as an abstract ring and consider its algebraic K-theory, or one can take the topology into account and then consider its topological K-theory. There is a natural map from the former to the latter and so one can form the homotopy fibre of this map giving the \emph{relative} K-theory. Karoubi's relative Chern character \cite{KarCR, Kar87, CK} is a homomorphism
\[
K_{i}^{\rel}(A) \to HC_{i-1}(A)
\]
mapping relative K-theory to continuous cyclic homology. It was an idea of Karoubi \cite{KaroubiConnexions, KarCR} that the relative Chern character could be used for the construction of regulators. In accordance with this idea Hamida  \cite{HamidaBorel} established, 
for $A=\mathbf C$ the field of complex numbers, a precise relation between the relative Chern character and the Borel regulator \cite{Borel1}
\[
K_{2n-1}(\mathbf C) \to \mathbf C.
\]
Karoubi's construction works equally well in the case of ultrametric Banach algebras and building on previous work by Hamida \cite{HamidaCR} we proved the $p$-adic analogue of the above result in \cite{TammeBorel}, giving the precise relation of the relative Chern character with the $p$-adic Borel regulator introduced by Huber and Kings \cite{HK}.

Changing perspective, let $X$ be a smooth variety over $\mathbf C$. Again we have the algebraic K-theory of $X$ but we can also consider the topological K-theory of the complex manifold associated to $X$. It is natural to ask for a generalization of the previous results to this situation. Here the analogue of the cyclic homology groups are the quotients of the de Rham cohomology by the Hodge filtration and Borel's regulator is replaced by Beilinson's regulator mapping algebraic K-theory to Deligne-Beilinson cohomology. In this setup we proved a comparison between the relative Chern character and Beilinson's regulator in \cite{TammeBeil}.

It is the goal of the present paper to prove the $p$-adic analogue of this result. Let $R$ be a complete discrete valuation ring with field of fractions $K$ of  characteristic $0$ with perfect residue field $k$ of characteristic $p$ and consider a smooth $R$-scheme $X$. The $p$-adic analogue of Beilinson's regulator is the rigid syntomic regulator, i.e.~the Chern character with values in rigid syntomic cohomology, introduced by Gros \cite{Gros} and developed systematically by Besser \cite{Besser}. We introduce topological and hence relative K-theory of $X$, and relative cohomology groups $H_{\rel}^{*}(X,n)$ mapping naturally to the rigid syntomic cohomology groups. These are the target of the relative Chern character in the $p$-adic situation. Our main result is
\begin{thm*}
Let $X$ be a smooth $R$-scheme and $i >0$. The diagram
\[
\xymatrix{
K_{i}^{\rel}(X) \ar[d]_{\ch_{n,i}^{\rel}} \ar[r] & K_{i}(X) \ar[d]^{\ch_{n,i}^{\syn}}\\
H^{2n-i}_{\rel}(X, n) \ar[r] & H^{2n-i}_{\syn}(X,n)
}
\]
commutes. 
\end{thm*}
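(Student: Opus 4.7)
My plan is to realise both Chern characters within a single Chern--Weil theoretic framework and then match them via a comparison of explicit complexes and secondary characteristic classes. The strategy parallels my earlier treatment of the complex analogue \cite{TammeBeil}, but with rigid syntomic cohomology replacing Deligne--Beilinson cohomology.

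First, I would model all four corners of the diagram as hypercohomology of presheaves on an appropriate site of smooth $R$-schemes (for example, via simplicial weak formal hypercovers), so that the two regulators and both horizontal maps are induced by actual maps of presheaves of spectra or complexes. Algebraic K-theory, the rigid-analytic ``topological'' K-theory of the generic fibre, and their homotopy fibre supply the top row. For the bottom row, Besser's construction realises rigid syntomic cohomology as hypercohomology of a cone complex $\mathcal{S}(n)$ assembled from the Hodge-filtered de Rham complex of $X_{K}$ and the rigid complex of the special fibre; the relative complex $\mathcal{R}(n)$ computing $H^{*}_{\rel}(X,n)$ should then be a companion cone designed so that the natural map $\mathcal{R}(n) \to \mathcal{S}(n)$ induces the bottom horizontal arrow.

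The main step is to produce a Chern--Weil description of the rigid syntomic Chern character $\ch^{\syn}_{n,i}$: I would show that it is represented, on the model of $B\GL$ over a suitable simplicial hypercover, by an explicit characteristic form of an integrable universal connection together with its Frobenius structure on the rigid generic fibre. With this description in hand, Karoubi's relative Chern character $\ch^{\rel}_{n,i}$ is, essentially by construction, the secondary characteristic class of the pair formed by the algebraic connection and a rigid-analytic trivialisation, valued in $\mathcal{R}(n)$. Commutativity of the diagram then reduces to the formal identity that pushing this secondary class forward along the cone map $\mathcal{R}(n) \to \mathcal{S}(n)$ recovers the primary Chern--Weil form.

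The hardest part will be the construction of the Chern--Weil model for $\ch^{\syn}_{n,i}$. Besser defines his Chern character axiomatically, through a universal first Chern class and the splitting principle, whereas Karoubi's construction is explicitly analytic and relies on integration of connection forms. Reconciling the two requires setting up a sufficiently functorial model of rigid syntomic cohomology on a site of simplicial weak formal schemes together with their rigid-analytic generic fibres, proving that Besser's Chern character lifts to this model, and verifying compatibility with the Frobenius. The Frobenius equivariance required in the rigid setting introduces genuine $p$-adic subtleties absent from the complex case of \cite{TammeBeil}; once it has been handled, the comparison of the two characteristic classes should proceed in analogy with that earlier work.
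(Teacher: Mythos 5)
Your overall framework---model all four corners by functorial complexes over simplicial models of $B\GL$, realise each regulator by a universal class, and compare the classes---is the right one, and it is essentially how the paper sets things up (via the formalism of ``regulator data'' in Section \ref{sec:Construction}). But the step you single out as the hardest, namely producing a Chern--Weil model of $\ch^{\syn}_{n,i}$ by an explicit characteristic form of a universal connection compatible with Frobenius, is left entirely unconstructed in your proposal, and it is exactly the difficulty the actual argument is designed to avoid. As written, your plan defers the whole content of the theorem to this unproved construction; I am not aware of such a Frobenius-equivariant Chern--Weil representative in the literature, and the eigenvalue condition $(1-\phi/p^{n})$ entering the syntomic complex has no evident interpretation in terms of connection forms. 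So there is a genuine gap: the proposal does not contain the idea that actually makes the comparison go through.

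That idea is a rigidity statement, not an explicit formula. Because $E_{\dot}\GL_{r}$ is simplicially contractible, the cohomology of every mapping fibre $\MF(\Gamma_{0}(B_{\dot}\GL_{r})\to\Gamma_{1}(a(E_{\dot}\GL_{r})))$ occurring in either construction maps isomorphically onto $\Fil^{n}H^{2n}_{\dR}(B_{\dot}\GL_{r,K}/K)$ (Lemma \ref{nov1204} and its analogues). Hence each universal class---$\ch^{\rel}_{n,(r)}$ and $\ch^{\syn}_{n,(r)}$ alike---is \emph{uniquely determined} by the property of mapping to the de Rham Chern character class $\ch^{\dR}_{n,(r)}$, and the two classes automatically correspond under the natural map relating the relative and syntomic complexes. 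The commutativity of the square is then a purely formal consequence of two naturality lemmas for regulator data (Lemmas \ref{nov1301} and \ref{nov1302}), applied after interpolating through the auxiliary complexes $\widetilde{R\Gamma}_{\rel}$ and the intermediate K-group $K_{i}(X,X_{k})$. No Chern--Weil description of the syntomic side is ever needed; the explicit connection-form picture enters only in Section \ref{sec:Karoubi}, in the separate comparison with Karoubi's original construction. If you want to salvage your route, you should replace the Chern--Weil step by this uniqueness argument; otherwise you must actually build the Frobenius-compatible universal connection, which is a substantially harder (and here unnecessary) undertaking.
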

If $X$ is proper the lower horizontal map is in fact an isomorphism and both groups are given by
$H^{2n-i-1}_{\dR}(X_{K}/K)/\Fil^{n}H^{2n-i-1}_{\dR}(X_{K}/K)$
which, in turn, is naturally isomorphic to the weight $n-1$ part $HC_{i-1}^{(n-1)}(X_K)$ in the
$\lambda$-decomposition of the cyclic homology of $X_K/K$ \cite{WeibelHodgeCyclic}.
Moreover, for projective $X$ and finite $k$, Parshin's conjecture would imply that also the upper horizontal map is rationally an isomorphism.

One of the possible advantages of this approach to the syntomic regulator is that Karoubi's constructions give quite explicit formulas. For instance, in the case $X=\Spec(R)$, these have been used in the comparison of Karoubi's regulator with the $p$-adic Borel regulator \cite{TammeBorel} and in computer calculations by Choo and Snaith \cite{ChooSnaith}.

Another motivation to study the relative Chern character and its relation to the syntomic regulator goes back to an idea of Besser.
In contrast to the Beilinson regulator or Soul\'e's \'etale $p$-adic regulator, the rigid syntomic regulator explicitly depends on the choice of the local model $X/R$ of the generic fibre $X_{K}/K$. In computations (e.g.~\cite{BesserK1surface,Besser-deJeu}) this leads to integrality assumptions one would like to remove. Besser proposed the use of Karoubi's relative Chern character in order to obtain a model independent replacement for the syntomic regulator. In fact, one can define topological and hence relative K-theory of $K$-schemes using the associated rigid space, and the techniques of this paper give a relative Chern character $K_{i}^{\rel}(X_{K}) \to H^{2n-i-1}_{\dR}(X_{K}/K)/\Fil^{n}$. If $X$ is a smooth, proper $R$-scheme there is a natural map $K_{*}^{\rel}(X) \to K_{*}^{\rel}(X_{K})$ and the relative Chern character for $X$ factors through the relative Chern character of $X_{K}$. 

In particular, if one assumes Parshin's conjecture, the relative Chern character would give a description of the syntomic regulator for $X$ smooth, projective over $R$ with finite residue field solely in terms of the generic fibre $X_{K}$. In general, a good understanding of topological K-theory is still missing.

From the Theorem we also get the following corollary (see \ref{apr1201}): 
\begin{cor*}
Assume that $k$ is finite and $X/R$ is smooth and projective. Then
\[
\xymatrix@C+0.5cm{
K_{i}^{\rel}(X) \ar[d]^{\ch_{n,i}^{\rel}} \ar[r] & K_{i}(X) \ar[d]^{r_{p}}\\
H^{2n-i-1}_{\dR}(X_{K}/K)/\Fil^{n} \ar[r]^-{\exp} & H^{1}(K, H^{2n-i-1}_{\et}(X_{\ol K}, \Q_{p}(n)))
}
\]
commutes.
\end{cor*}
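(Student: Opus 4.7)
The plan is to deduce the Corollary from the main Theorem of the paper together with the standard compatibility between the rigid syntomic regulator and Soulé's étale $p$-adic regulator via the Bloch--Kato exponential map.

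First I would apply the main Theorem. Since $X$ is smooth and projective over $R$, the lower horizontal map $H^{2n-i}_{\rel}(X,n) \to H^{2n-i}_{\syn}(X,n)$ in the Theorem is an isomorphism and both groups identify canonically with $H^{2n-i-1}_{\dR}(X_K/K)/\Fil^n$, as noted directly below the Theorem's statement. Under this identification, the Theorem asserts that $\ch^{\rel}_{n,i}$ equals the composition of $K^{\rel}_i(X) \to K_i(X)$ with $\ch^{\syn}_{n,i}$, viewed now as a map to $H^{2n-i-1}_{\dR}(X_K/K)/\Fil^n$. Consequently the Corollary reduces to establishing the equality
\[
\exp \circ \ch^{\syn}_{n,i} = r_p
\]
as maps $K_i(X) \to H^1(K, H^{2n-i-1}_{\et}(X_{\ol K}, \Q_p(n)))$.

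Next I would invoke the standard compatibility of syntomic and étale $p$-adic regulators. Because $X$ is smooth and projective over $R$ with finite residue field $k$, the Galois representation $H^{2n-i-1}_{\et}(X_{\ol K}, \Q_p(n))$ is crystalline (hence de Rham) by the crystalline comparison theorem of Fontaine--Messing, Faltings and Tsuji, and its filtered $K$-module $D_{\dR}$ is naturally isomorphic to $H^{2n-i-1}_{\dR}(X_K/K)$ as a filtered vector space. The Bloch--Kato exponential then realizes $H^{2n-i-1}_{\dR}(X_K/K)/\Fil^n$ as $H^1_f(K, H^{2n-i-1}_{\et}(X_{\ol K}, \Q_p(n))) \subseteq H^1(K, H^{2n-i-1}_{\et}(X_{\ol K}, \Q_p(n)))$. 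The needed equality $\exp \circ \ch^{\syn}_{n,i} = r_p$ is then the now-classical statement that under this identification the rigid syntomic regulator lands in $H^1_f$ and coincides there with the étale regulator; this is essentially due to Niziol, and is implicit in the description of the syntomic regulator as the ``crystalline component'' of Soulé's regulator via the Fontaine--Messing period map.

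The main technical obstacle is the bookkeeping required to apply this comparison in the precise formulation used in the paper: one must verify that Besser's variant of rigid syntomic cohomology, the Frobenius and Tate-twist conventions, and the sign normalization of the exponential all match those appearing in the syntomic--étale comparison statement, so that it can be applied verbatim. Once this identification is in place, concatenating the commutative square provided by the main Theorem with the commutative square expressing $\exp \circ \ch^{\syn}_{n,i} = r_p$ yields the diagram of the Corollary.
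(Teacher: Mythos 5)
Your proposal is correct and follows essentially the same route as the paper: reduce via Theorem \ref{thm:Comparison} to the identity $\exp\circ\ch^{\syn}_{n,i}=r_{p}$, which the paper obtains from Nizio\l's comparison results as packaged in \cite[Prop.~9.9, Cor.~9.10, Prop.~9.11]{Besser}. The ``bookkeeping'' you flag is exactly what those citations discharge (together with the paper's remark that its variant of $R\Gamma_{\syn}$ satisfies Besser's Proposition 6.3, so his later constructions apply verbatim).
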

Here $r_{p}$ is the \'etale $p$-adic regulator and $\exp$ is the Bloch-Kato exponential map of the $p$-adic $\mathrm{Gal}(\ol K/K)$-representation $H^{2n-i-1}_{\et}(X_{\ol K}, \Q_{p}(n))$.
This corollary may be seen as a generalization of the main result of Huber-Kings \cite[Theorem 1.3.2]{HK} which is the case $X=\Spec(R)$ and amounts to the commutativity of
\[
\xymatrix{
K_{2n-1}(R) \ar[r]^-{r_{p}} \ar[dr]_-{p\text{-adic Borel}} & H^{1}(K, \Q_{p}(n))\\
& K=D_{\dR}(\Q_{p}(n)) \ar[u]_{\exp}
}
\]
to all smooth, projective $R$-schemes (see our Corollary \ref{cor:HK}). 

\smallskip

A result related to ours is proven by Chiarellotto, Ciccioni, and Mazzari in \cite{Mazzari}. They provide an alternative construction of the rigid syntomic regulator in terms of higher Chow groups and syntomic cycle classes. A key step in their construction is the compatibility of the de Rham and rigid cycle classes under the specialization map from de Rham cohomology of the generic fibre to rigid cohomology of the special fibre, whereas our result in some sense rests on the compatibility of topological and rigid Chern classes.

For an interpretation of the relative cohomology groups, introduced here, in stable homotopy theory of schemes we refer the reader to \cite{DegliseMazzari}.

\smallskip

Let us describe the contents of this paper in more detail. 
Karoubi's original construction of the relative Chern character uses integration of certain $p$-adic differential forms over standard simplices. A reformulation of this construction 
is given in Section \ref{sec:Karoubi}.
The key ingredient that enables us to compare the relative Chern character and the rigid syntomic regulator is a new description of the former in Section \ref{sec:Construction} that is similar to the construction of Chern class maps on higher K-groups by Beilinson \cite{Beilinson}, Huber \cite{Huber}, and Besser \cite{Besser}. This is made possible by the functorial complexes of Besser \cite{Besser} and Chiarellotto-Ciccioni-Mazzari \cite{Mazzari}. Their construction is recalled in Section \ref{sec:Functorial} with some simplifications coming from the systematic use of Gro{\ss}e-Kl\"onne's dagger spaces \cite{GK, GKRigid}. The necessary background from rigid resp. ``dagger'' geometry is collected in Section \ref{sec:Rigid}. In Section \ref{sec:K} we recall Karoubi and Villamayor's definition of topological K-theory for ultrametric Banach rings \cite{KV, Calvo} and extend it to smooth, separated $R$-schemes.
The necessary comparison between the two constructions of the relative Chern character is proven in Theorem \ref{thm:KaroubisRelChern}.
The main comparison theorem (Theorem \ref{thm:Comparison}) then follows rather formally. 
Applications are given in Subsection \ref{ssec:applications}.

\subsection*{Acknowledgments}
The results presented here emerged from my thesis \cite{TammeDiss}. It is a pleasure to thank my
advisor Guido Kings for his interest and constant support.
I would like to thank the California Institute of Technology, where most of this work was done, and
especially Matthias Flach for their hospitality and the Deutsche Forschungsgemeinschaft for
financial support.
Furthermore, I would like to thank Nicola Mazzari for interesting discussions about this work and the referee for several useful remarks.

\subsection{Notation}\label{sec:notation}
For $p \in \N_{0}=\{0,1,2,\dots\}$ we denote by $[p]$ the finite set $\{0,\dots, p\}$ with its natural order. The category of finite ordered sets with monotone maps is the \emph{simplicial category} $\Delta$.  The unique injective map $[p-1] \to [p]$ that does not hit $i$ is denoted by $\del^{i}$. Similarly, $s^{i}\colon [p+1] \to [p]$ is the unique surjective map such that $s^{i}(i)=s^{i}(i+1)$. These induce morphisms $\del_{i}, s_{i}$ (resp. $\del^{i}, s^{i}$) on every (co)simplicial object called (co)face and (co)degeneracy morphisms, respectively. 

For a group object $G$ we define the simplicial objects $E_{\dot}G$ and $B_{\dot}G$ by $E_{p}G=G^{\times (p+1)}$ and $B_{p}G = G^{\times p}$ with the usual faces and degeneracies (see e.g.~\cite[0.2]{HK}).

\section{Preliminaries on rigid geometry}
\label{sec:Rigid}

In the definition of rigid cohomology one usually works with rigid analytic spaces and their de Rham cohomology. However, as de Rham cohomology is not  well behaved for rigid spaces one has to introduce some overconvergence condition. An elegant approach to do this is to replace rigid spaces by Gro{\ss}e-Kl\"onne's dagger spaces \cite{GK, GKRigid}. We recall some basic definitions and facts which will be needed in the rest of the paper.

Let $R$ be a complete discrete valuation ring with field of fractions $K$ of characteristic 0 and residue field $k$ of characteristic $p>0$. Fix an absolute value $|\,.\,|$ on $K$.

\begin{Nr}[Rings]\label{Nr:DaggerRings}
The $K$-algebra of \emph{overconvergent} power series in $n$ variables $\ul x = (x_{1}, \dots, x_{n})$ is   
\(
K\<\ul x\>^{\dag} := \left\{\sum a_{\nu}\ul x^{\nu} \big|a_{\nu}\in K, \exists\rho>1:|a_{\nu}|\rho^{|\nu|}\xrightarrow{|\nu|\to\infty} 0\right\}.
\)
We denote by $R\<\ul x\>^{\dag}$ the sub-$R$-algebra of overconvergent power series with $R$-coefficients. A $K$- (resp. $R$-)\emph{dagger algebra} is a quotient of some $K\<\ul x\>^{\dag}$ (resp. $R\<\ul x\>^{\dag}$).

The algebra of overconvergent power series carries the Gau{\ss} norm $\left|\sum_{\nu} a_{\nu}\ul x^{\nu}\right| =\sup_{\nu}|a_{\nu}|$. Its completion with respect to this norm is the Tate algebra of convergent power series $K\<\ul x\> =\{\sum_{\nu}a_{\nu}x^{\nu} \big| |a_{\nu}| \xrightarrow{|\nu|\to\infty} 0\}$. Similarly, the completion of $R\<x\>^{\dag}$ is $R\<x\>$. Quotients of these are called $K$- (resp. $R$-)\emph{affinoid}. These are Banach algebras. Up to equivalence the quotient norm on a dagger or affinoid algebra does not depend on the chosen representation as a quotient of an algebra of overconvergent, respectively, convergent power series.

To any $R$-algebra $A$ one can associate its \emph{weak completion} $A^\dag$ (cf.~\cite[Dfn.~1.1]{MW}).
If $A$ is an $R$-algebra of finite type and $A\cong R[\ul x]/I$ is a presentation,
then there is an induced isomorphism  $A^{\dag} \cong R\<\ul x\>^{\dag}/IR\<\ul x\>^{\dag}$. In particular, $A^\dag$ is an $R$-dagger algebra.
Similarly, there is a weak completion for normed $K$-algebras.

The categories of $R$- resp. $K$-dagger and affinoid algebras admit tensor products. 
E.g., if $A$ and $B$ are $R$-dagger algebras, their tensor product is $A\otimes_R^\dag B := (A\otimes_R B)^\dag$. Given presentations
 $A\cong R\<\ul x\>^{\dag}/I$ and $B\cong R\<\ul y\>^{\dag}/J$, there is a natural isomorphism $A \otimes_{R}^{\dag} B \cong R\<\ul x, \ul y\>^{\dag}/(I+J)$.
\end{Nr}

\begin{Nr}[Spaces]\label{Nr:DaggerSpaces}
We only sketch the main points here, referring the reader to \cite{GK, GKRigid} for details. Similarly as one defines rigid analytic spaces that are locally isomorphic to max-spectra of $K$-affinoid algebras with a certain Grothendieck topology, \emph{dagger spaces} are defined by taking the max-spectra $\Sp(A)$ of $K$-dagger algebras $A$ as building blocks \cite[2.12]{GKRigid}. For any dagger space $X$ one has an associated rigid space $X^{\rig}$ (``completion of the structure sheaf'') and a natural map of G-ringed spaces $X^{\rig} \xrightarrow{u} X$ which is an isomorphism on the underlying G-topological spaces \cite[2.19]{GKRigid}.

There exists a \emph{dagger analytification functor} $X\mapsto X^{\dag}$ from the category of $K$-schemes of finite type to the category of $K$-dagger spaces. There is a natural morphism of locally G-ringed spaces $X^{\dag} \xrightarrow{\iota} X$ which is final for morphisms from dagger spaces to $X$ \cite[3.3]{GKRigid}.

We also need the notion of \emph{weak formal schemes} (\cite[Ch. 3]{GK} and originally \cite{Mer}). Let $A$ be an $R$-dagger algebra and $\ol A = A\otimes_{R} k$. Then $D(\ol f) \mapsto A\<\frac{1}{f}\>^{\dag}$ where $f\in A$ lifts $\ol f\in \ol A$ defines a sheaf of local rings on the topological space $\Spec(\ol A)$. The corresponding locally ringed space is the \emph{weak formal $R$-scheme} $\Spwf(A)$. A general weak formal $R$-scheme is a locally ringed space that is locally isomorphic to some $\Spwf(A)$. 

Sending $\Spwf(A)$ for an $R$-dagger algebra $A$ to $\Sp(A\otimes_{R} K)$ induces the \emph{generic fibre} functor $\mathscr X \mapsto \mathscr X_{K}$ from weak formal $R$-schemes to $K$-dagger spaces and there is a natural \emph{specialization map} $\spec\colon \mathscr X_{K} \to \mathscr X$.

Taking the weak completion of finitely generated $R$-algebras induces the functor $X \mapsto \widehat X$ from $R$-schemes of finite type to weak formal $R$-schemes. There is a natural morphism of dagger spaces from the generic fibre of the weak completion of $X$, $\widehat X_{K}$, to the dagger analytification $X_{K}^{\dag}$ of the generic fibre $X_{K}$ of $X$. This is an open immersion if $X$ is separated and an isomorphism if $X$ is proper over $R$ (cf. \cite[Proposition 0.3.5]{BerCohomRig}). For example, if $X=\mathbb A^{1}_{R}$ then $\widehat X_{K}=\Sp(K\<x\>^{\dag})$ is the closed ball of radius 1 in $(\mathbb A^{1}_{K})^{\dag}$.

\end{Nr}

\section{Preliminaries on K-theory}
\label{sec:K}

\begin{Nr}
Let $A$ be an ultrametric Banach ring with norm $|\,.\,|$, e.g. an affinoid algebra with a fixed norm. In \cite{KV} Karoubi and Villamayor introduce K-groups of $A$ that we will denote by $K_{\top}^{-i}(A), i\geq 1$, which were further studied by Calvo \cite{Calvo}. A convenient way to define them is the following: Set 
\[
A\<x_{0}, \dots, x_{n}\> := \left\{\sum a_{\nu}\ul x^{\nu} \big|a_{\nu}\in A, |a_{\nu}|\xrightarrow{|\nu|\to\infty} 0\right\}
\]
and $A\<\Delta^n\> := A\<x_{0}, \dots, x_{n}\>/(\sum_{i} x_{i} -1)$. Then $[n]\mapsto A\<\Delta^n\>$ becomes a simplicial ring in a natural way and hence $B_{\dot}\GL(A\<\Delta^{\dot}\>)$ is a bisimplicial set (cf. \ref{sec:notation}). 
For any bisimplicial set $S_{\dot\dot}$, we denote by $\pi_*(S_{\dot\dot})$ the homotopy groups of the underlying diagonal simplicial set.
We define
\[
K^{-i}_{\top}(A) := \pi_{i}(B_{\dot}\GL(A\<\Delta^{\dot}\>)), \quad i\geq 1.
\]
That this definition coincides with the original one in \cite{KV, Calvo} follows from an argument of Anderson 
 \cite[Prop. 7.3]{TammeDiss}.

For our purposes it is important to know that one can compute the K-theory of affinoid algebras using dagger algebras. More precisely, let $R$ be a complete discrete valuation ring with field of fractions $K$ of characteristic 0 and residue field $k$ of characteristic $p>0$ as before.
We define the simplicial ring $R\<\Delta^{\dot}\>^{\dag}$ by $R\<\Delta^{n}\>^{\dag} = R\<x_{0}, \dots, x_{n}\>^{\dag}/(\sum_{i} x_{i} -1)$ with the obvious structure maps. For any $R$-dagger algebra $A$ we set $A\<\Delta^{\dot}\>^{\dag}=A\otimes_{R}^{\dag} R\<\Delta^{\dot}\>^{\dag}$ (cf. \ref{Nr:DaggerRings}) and define topological K-groups by
\[
K^{-i}_{\top}(A) := \pi_{i}(B_{\dot}\GL(A\<\Delta^{\dot}\>^{\dag})), \quad i\geq 1.
\]
Using Calvo's techniques we have shown in \cite[Prop. 7.5]{TammeDiss} that these agree with the Karoubi-Villamayor K-groups of the completion $\widehat A$ of $A$:
\[
K^{-i}_{\top}(A) \cong K^{-i}_{\top}(\widehat A), \quad i\geq 1.
\]
\end{Nr}

Now let $X = \Spec(A)$ be an affine scheme of finite type over $R$. 
\begin{dfn} We define the topological K-groups of $X$ to be the topological K-groups of the (weak) completion of $A$:
\[
K^{-i}_{\top}(X) := K^{-i}_{\top}(A^{\dag}) = K^{-i}_{\top}(\widehat A), \quad i\geq 1.
\]
\end{dfn}

\begin{rems}\label{nov0201}
(i) Note the similarity with topological complex K-theory: If $X$ is a smooth separated scheme of finite type over $\C$, and $A_{\dot}$ denotes the simplicial ring of smooth functions $X(\C) \times \Delta^{\dot} \to \C$, then $\pi_{i}(B_{\dot}\GL(A_{\dot})) \cong K^{-i}_{\top}(X(\C))$ is the connective complex K-theory of the manifold $X(\C)$.

\noindent (ii) Let $\pi\in R$ be a uniformizer of $R$ and $B$ any $R$-dagger or affinoid algebra. Then $(\pi) \subseteq B$ is topologically nilpotent. 
Calvo \cite{Calvo} proved that the reduction $B \to B/(\pi)$ induces an isomorphism $K^{-i}_{\top}(B) \xrightarrow{\cong} K^{-i}_{\top}(B/(\pi))$. This last group is the Karoubi-Villamayor K-theory of $B/(\pi)$. In particular, if $B/(\pi)$ is regular, this coincides with the Quillen K-theory: $K^{-i}_{\top}(B) \xrightarrow{\cong} K_{i}(B/(\pi))$ \cite[3.14]{Gersten}.

\noindent (iii) If, in the situation of the definition, $\pi$ is invertible on $X = \Spec(A)$, i.e., the special fibre $X_{k}$ is empty, then the completion $\widehat A$ is the zero ring and the topological K-theory of $X$ vanishes.
\end{rems}

\begin{Nr}[Connection with algebraic K-theory] Recall that for any ring $A$, the Karoubi-Villamayor K-groups \cite{KV} can be defined as
\[
 KV_i(A) = \pi_i\left(B_\dot\GL(A[\Delta^\dot])\right), \quad i\geq 1,
\]
where $A[\Delta^\dot]$ is the simplicial ring with $A[\Delta^n] = A[x_0, \dots, x_n]/(\sum_i x_i -1)$.
There is a natural map from the Quillen K-group $K_i(A)$ to $KV_i(A)$ which is an isomorphism when $A$ is regular. Since we are only interested in the case of regular rings, we will in the following identify $K_i(A) = KV_i(A)$.

Consider a smooth affine $R$-scheme $X = \Spec(A)$ as above. There is a natural map $A[\Delta^\dot] \to A^\dag\<\Delta^\dot\>^\dag$. We define the bisimplicial set
\begin{equation}\label{nov0904}
F(X) := F(A) := B_\dot\GL\left(A[\Delta^\dot]\right) \times_{B_\dot\GL\left(A^\dag\<\Delta^\dot\>^\dag\right)} E_\dot\GL\left(A^\dag\<\Delta^\dot\>^\dag\right)
\end{equation}
and the \emph{relative K-groups of $X$}
\[
 K_{i}^{\rel}(X) := \pi_{i} F(A),\quad i>0.
\]
We will also need the following finite level variant of $F(A)$:
\begin{equation}\label{dez1202}
F_{r}(A) := B_\dot\GL_{r}\left(A[\Delta^\dot]\right) \times_{B_\dot\GL_{r}\left(A^\dag\<\Delta^\dot\>^\dag\right)} E_\dot\GL_{r}\left(A^\dag\<\Delta^\dot\>^\dag\right)
\end{equation}
so that $F(A) = \varinjlim_{r} F_{r}(A)$ and $K_{i}^{\rel}(X) = \varinjlim_{r} \pi_{i}(F_{r}(A))$.
Since the projection $E_\dot\GL(A^\dag\<\Delta^\dot\>^\dag) \to B_\dot\GL(A^\dag\<\Delta^\dot\>^\dag)$ is a Kan fibration on the diagonal simplicial sets and since $E_\dot\GL(A^\dag\<\Delta^\dot\>^\dag)$ is contractible  we get:
\end{Nr}
\begin{lemma}\label{lem:KRel}
There are long exact sequences
\[
\dots \to K_{i}^{\rel}(X) \to K_{i}(X) \to K^{-i}_{\top}(X) \to K_{i-1}^{\rel}(X) \to \dots 
\]
\end{lemma}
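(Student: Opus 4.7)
The plan is to realize $F(A)$ as (a model for) the homotopy fibre of the natural map $B_\dot\GL(A[\Delta^\dot]) \to B_\dot\GL(A^\dag\<\Delta^\dot\>^\dag)$ of diagonal simplicial sets and then read off the long exact sequence from the Puppe fibre sequence.

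Concretely, I would first invoke the fact (already stated in the paragraph preceding the lemma) that for any simplicial group $G_\dot$ the map $E_\dot G_\dot \to B_\dot G_\dot$ is a Kan fibration on diagonals with contractible total space $\mathrm{diag}(E_\dot G_\dot)$. Applied to $G_\dot = \GL(A^\dag\<\Delta^\dot\>^\dag)$ this gives a Kan fibration whose base is $\mathrm{diag}(B_\dot\GL(A^\dag\<\Delta^\dot\>^\dag))$. Next, since the diagonal functor commutes with finite limits of bisimplicial sets and since pullbacks of Kan fibrations are Kan fibrations, the defining square (\ref{nov0904}) shows that $\mathrm{diag}(F(A)) \to \mathrm{diag}(B_\dot\GL(A[\Delta^\dot]))$ is itself a Kan fibration whose total fibre (over the basepoint) agrees, up to weak equivalence, with the fibre of $E_\dot \to B_\dot$ of the target group.

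With the fibration in place, the contractibility of $\mathrm{diag}(E_\dot\GL(A^\dag\<\Delta^\dot\>^\dag))$ together with the long exact sequence of a Kan fibration identifies $F(A)$ as the homotopy fibre of $B_\dot\GL(A[\Delta^\dot]) \to B_\dot\GL(A^\dag\<\Delta^\dot\>^\dag)$, yielding
\[
\dots \to \pi_i F(A) \to \pi_i B_\dot\GL(A[\Delta^\dot]) \to \pi_i B_\dot\GL(A^\dag\<\Delta^\dot\>^\dag) \to \pi_{i-1} F(A) \to \dots
\]
Finally I would substitute the definitions: the middle term is $KV_i(A) = K_i(X)$ by regularity of the smooth $R$-algebra $A$ (as recalled just before the definition of $F(A)$), the right-hand term is $K^{-i}_\top(X)$ by the definition of topological K-theory via $A^\dag\<\Delta^\dot\>^\dag$, and the left-hand term is $K^{\rel}_i(X)$ by definition.

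The only mildly subtle point is the interplay between bisimplicial sets and their diagonals: one must know that $\mathrm{diag}$ preserves pullbacks and that the diagonal of $E_\dot G_\dot \to B_\dot G_\dot$ is a Kan fibration (rather than merely a levelwise one); both are standard, so I do not expect a real obstacle. The remainder of the argument is purely formal use of the long exact sequence of a fibration.
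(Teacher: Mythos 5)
Your proposal is correct and is essentially the paper's own argument: the lemma is deduced from the fact that $E_\dot\GL(A^\dag\<\Delta^\dot\>^\dag) \to B_\dot\GL(A^\dag\<\Delta^\dot\>^\dag)$ is a Kan fibration on diagonals with contractible total space, so the pullback \eqref{nov0904} realizes $F(A)$ as the homotopy fibre and the long exact sequence of a fibration applies. The identifications of the three terms with $K_i^{\rel}(X)$, $K_i(X)=KV_i(A)$, and $K^{-i}_{\top}(X)$ are exactly those used in the text.
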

In the following, we will use the notation $\GL(X):= \GL(\Gamma(X,\O_X))$ where $X$ can be a scheme, a dagger space, or a weak formal scheme.

We extend the definition of topological and relative K-theory to smooth, separated $R$-schemes $X$ of finite type as follows. 
Write $\Delta^\dot_R := \Spec(R[\Delta^\dot])$. 
Since K-theory for regular schemes satisfies Zariski descent
we have  isomorphisms 
\begin{equation}\label{eq:AlgKDescent}
K_{i}(X) \cong \varinjlim_{U_{\dot}\to X} \pi_{i}\holim_{[q]\in\Delta}(B_{\dot}\GL(U_{q}\times_R\Delta^\dot_R)), \quad i\geq 1
\end{equation}
where $U_{\dot}\to X$ runs through all finite affine open coverings of $X$ viewed as simplicial schemes (cf. \cite[Prop. 18.1.5]{Huber}).
In analogy to \eqref{eq:AlgKDescent} we define
\begin{gather*}
K^{-i}_{\top}(X) := \varinjlim_{U_{\dot}\to X} \pi_{i} \holim_{[q]\in\Delta} B_{\dot}\GL(\widehat U_{q} \times_R \widehat\Delta^{\dot}_R), \quad i\geq 1, \text{ and}\\
K_{i}^{\rel}(X):= \varinjlim_{U_{\dot}\to X} \pi_{i} \holim_{[q]\in\Delta} F(U_{q}).
\end{gather*}
Here $\widehat \Delta^p_R$ is the weak completion of the algebraic standard simplex
 $\Delta^{p}_R$  so that if $U=\Spec(A)$ then $\widehat U \times_R \widehat\Delta^{p}_R = \Spwf(A^{\dag} \otimes_{R}^{\dag} R\<\Delta^p\>^{\dag})$ (see \ref{Nr:DaggerRings}).
These groups are contravariantly functorial in $X$.

\begin{lemma}\label{lemma:RelKSmoothSeparated}
\begin{enumerate}
\item If $X=\Spec(A)$ is affine and smooth over $R$ these definitions coincide with the earlier ones.
\item If $X$ is a smooth, separated $R$-scheme of finite type, there is an isomorphism $K^{-i}_\top(X) \cong K_i(X_k)$.
\item There are long exact sequences 
\[
\dots \to K_{i}^{\rel}(X) \to K_{i}(X) \to K^{-i}_{\top}(X) \to K_{i-1}^{\rel}(X) \to \dots 
\]
as before.
\end{enumerate}
\end{lemma}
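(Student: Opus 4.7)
The plan is to reduce both parts to standard facts. For (i), the trivial covering $\{X\to X\}$ is one term of the filtered diagram of finite affine open coverings: its \v{C}ech nerve is the constant simplicial scheme $U_q = X$, and since the homotopy limit of a constant cosimplicial diagram is its value, the contribution of this covering to the colimits defining $K_i(X)$, $K^{-i}_{\top}(X)$, and $K^{\rel}_i(X)$ is precisely $B_\dot\GL(A[\Delta^\dot])$, $B_\dot\GL(A^\dag\<\Delta^\dot\>^\dag)$, and $F(A)$, whose homotopy groups recover the earlier definitions. Hence (i) will follow once every transition map in the filtered system is shown to be a weak equivalence.

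For algebraic K-theory this is Zariski descent \cite[Prop. 18.1.5]{Huber}. For topological K-theory, the key inputs are $\widehat B/(\pi) = B/(\pi)$ for any finitely generated $R$-algebra $B$ and the analogous identification $\widehat\Delta^p_R/(\pi) = \Delta^p_k$; together with the smoothness of $X$ (hence of $X_k$), Remark~\ref{nov0201}(ii) should upgrade to a weak equivalence of cosimplicial bisimplicial sets $[q]\mapsto B_\dot\GL(\widehat U_q \times_R \widehat\Delta^\dot_R) \simeq B_\dot\GL(U_{q,k}\times_k \Delta^\dot_k)$. Zariski descent for the right-hand side is Zariski descent for Quillen K-theory of the smooth $k$-scheme $X_k$, which is classical. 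Since $F$ is defined by the strict pullback \eqref{nov0904} along the Kan fibration $E_\dot\GL \to B_\dot\GL$, descent for $F$ follows formally from descent for the two factors.

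For (ii), the same Kan fibration property makes the square defining $F(U_q)$ a homotopy pullback, so $F(U_q)$ is the homotopy fibre of $B_\dot\GL(U_q[\Delta^\dot]) \to B_\dot\GL(U_q^\dag\<\Delta^\dot\>^\dag)$. Homotopy limits preserve homotopy fibre sequences, so for each fixed covering one obtains a long exact sequence of homotopy groups as in Lemma~\ref{lem:KRel}, and since filtered colimits of abelian groups are exact, passing to $\varinjlim$ over coverings yields the long exact sequence in the statement.

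The main difficulty is promoting Remark~\ref{nov0201}(ii) from an isomorphism on homotopy groups to a weak equivalence of simplicial sets that is functorial in the open immersions appearing in the \v{C}ech nerve, so that Zariski descent for topological K-theory reduces to Zariski descent for algebraic K-theory of the special fibre. This should be achievable using the simplicial nature of Calvo's argument combined with Gersten's comparison of Karoubi--Villamayor and Quillen K-theory for regular rings, possibly after restricting to a cofinal subsystem of coverings by smooth affine opens.
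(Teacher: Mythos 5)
Your proposal is correct and follows essentially the same route as the paper: Calvo's theorem upgrades (via the natural reduction map) to a levelwise weak equivalence $B_\dot\GL(\widehat U_q \times_R \widehat\Delta^\dot_R) \xrightarrow{\sim} B_\dot\GL(U_{q,k}\times_k \Delta^\dot_k)$, so that Zariski descent for topological K-theory reduces to Zariski descent for Quillen K-theory of the smooth special fibre $X_k$, and (ii) follows from the homotopy-fibre description of $F$ together with the fact that $\holim$ preserves homotopy fibrations of fibrant simplicial sets. The only cosmetic difference is that the paper deduces descent for $K^{\rel}_*$ from the five lemma applied to the long exact sequences rather than from preservation of homotopy pullbacks, and it identifies the colimit directly with $K_i(X_k)$ rather than arguing that the filtered system is essentially constant.
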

\begin{proof}
(ii) Calvo's theorem \ref{nov0201}(ii) implies
that for a smooth affine $R$-scheme $U$ we have weak equivalences 
\[
B_{\dot}\GL(\widehat U \times_{R} \widehat\Delta^{\dot}_R) \xrightarrow{\sim} B_{\dot}\GL(U_{k} \times_{k} \Delta^{\dot}_{k}). 
\]
Since $\holim$ preserves weak equivalences between fibrant simplicial sets, for any open affine covering $U_{\dot} \to X$, viewed as simplicial scheme, we get weak equivalences 
\[
\holim_{[q]\in\Delta} B_{\dot}\GL(\widehat U_{q} \times_{R} \widehat \Delta^{\dot}_R) \xrightarrow{\sim} \holim_{[q]\in\Delta} B_{\dot}\GL((U_{q})_{k}\times_{k} \Delta^{\dot}_{k}).
\]
Taking $\pi_{i}$ and the limit over all finite affine coverings yields isomorphisms
\begin{multline*}
\varinjlim_{U_{\dot}\to X} \pi_{i} \holim_{[q]\in\Delta} B_{\dot}\GL(\widehat U_{q} \times_{R} \widehat \Delta^{\dot}_R) \xrightarrow{\cong} \varinjlim_{U_{\dot}\to X} \pi_{i} \holim_{[q]\in\Delta}  B_{\dot}\GL((U_{q})_{k}\times_{k} \Delta^{\dot}_{k}) \\
\overset{\eqref{eq:AlgKDescent}}{\cong} 
K_{i}(X_{k}).
\end{multline*}
This proves (ii). 
Using Calvo's result $K^{-i}_{\top}(A^{\dag}) \cong K_i(X_k)$
we get (i) for $K^{-*}_{\top}$. Using the five lemma the result for $K_{*}^{\rel}$ follows from this, \eqref{eq:AlgKDescent} again, and (iii).

(iii) follows from Lemma \ref{lem:KRel} and the fact that $\holim$ preserves homotopy fibrations of fibrant simplicial sets \cite[Lemma 5.12]{ThomasonAlgK}.
\end{proof}

\begin{rem}
 Using $K$-dagger or $K$-affinoid algebras in the above constructions we get a notion of topological K-theory for rigid $K$-spaces. It is likely that this coincides with the one defined by Ayoub \cite{Ayoub} using the stable homotopy category of schemes and rigid varieties, respectively.
\end{rem}

\section{Preliminaries on functorial complexes}
\label{sec:Functorial}

For our construction of the relative Chern character we need functorial complexes computing the different cohomology theories involved. The main work has been done before by Huber \cite{Huber}, Besser \cite{Besser}, and Chiarellotto-Ciccioni-Mazzari \cite{Mazzari}. The only difference in our approach is the systematic use of dagger spaces also initiated by Huber and Kings \cite{HK} which simplifies the construction of the rigid and syntomic complexes.

\begin{Nr}[Godement resolutions]\label{Nr:Godement}
We recall some facts on Godement resolutions (see \cite[Sections 3 and 4]{Mazzari} for more details and references). 
To a morphism of sites $u\colon P \to X$ and an abelian sheaf $\mathcal F$ on $X$ one associates a cosimplicial sheaf $[p]\mapsto (u_{*}u^{*})^{p+1}\mathcal F$ on $X$ where the structure maps are induced by the unit and the counit of the adjoint pair $(u^{*}, u_{*})$ between the categories of abelian sheaves on $P$ and $X$. The associated complex of sheaves on $X$ will be denoted by $\Gd_{P}\mathcal F$. There is a canonical augmentation $\mathcal F \to \Gd_{P}\mathcal F$ which is a quasi-isomorphism if $u^{*}$ is exact and conservative (e.g. \cite[Lemma 3.4.1]{Ivorra}). 

We want to use this in the situation where $P$ is a certain set of points of (the topos associated with) $X$ with the discrete topology.
The first case is that of a \emph{scheme} $X$. Here we take $P=Pt(X)$ to be the set of all points of the underlying topological space of $X$. Then $u^{*}$ is given by $\mathcal F \mapsto \coprod_{x\in X}\mathcal F_{x}$ which is exact and conservative and $\mathcal F \xrightarrow{\qis} \Gd_{Pt(X)}\mathcal F$ is the usual Godement resolution.

The second case is that of a dagger space $X$. Here it is not enough to take just the usual points of $X$. Instead, one has to use the set of prime filters on $X$ (introduced in \cite{vdPS}) as alluded to in \cite{Besser} and carried out in \cite[Section 3]{Mazzari}. 
We take $P=Pt(X)$ to be the set of prime filters (\cite[Ex.~3.2.3]{Mazzari}) on the rigid space $X^{\rig}$ associated with $X$ with the discrete topology. Then there are morphisms of sites $Pt(X) \xrightarrow[\text{\cite[3.2.3]{Mazzari}}]{\xi} X^{\rig} \xrightarrow[\text{cf. \ref{Nr:DaggerSpaces}}]{u} X$.

Since $u$ is the identity on underlying G-topological spaces we get from \cite[Lemma 3.2.5]{Mazzari} that $\xi^{*}u^{*}$ is exact and conservative. Hence for any abelian sheaf $\mathcal F$ on $X$ the augmentation $\mathcal F \to \Gd_{Pt(X)}X$ is a quasi-isomorphism. 

It is important to note that in both cases the complex $\Gd_{P}\mathcal F$ consists of flabby sheaves. 
This follows automatically since on a discrete site every sheaf is flabby and direct images of flabby sheaves are flabby.

More generally, if $\mathcal F^{*}$ is a bounded below complex of abelian sheaves on $X$ we can apply $\Gd_{P}$ to each component to get a double complex. We then denote by $\Gd_{P} \mathcal F^{*}$ its associated total complex. It follows from a simple spectral sequence argument that the induced morphism $\mathcal F^{*} \to \Gd_{P}\mathcal F^{*}$ is a quasi-isomorphism.

An important feature of the Godement resolution is its functorial behavior: If 
\[
\xymatrix{ Q\ar[r] \ar[d] & P \ar[d] \\
Y \ar[r]^{f} & X}
\]
is a commutative diagram of sites, and $\mathcal F$ (resp. $\mathcal G$) is a sheaf on $X$ (resp. $Y$), then a morphism $\mathcal F \to f_{*}\mathcal G$ induces a morphism $\Gd_{P}\mathcal F \to f_{*}\Gd_{Q}\mathcal G$ compatible with the augmentations \cite[Lemma 3.1.2]{Mazzari}.
\end{Nr}

\begin{Nr}[{Analytic de Rham cohomology}]\label{mar2005}
 Let $X$ be a smooth $K$-dagger space. There is a notion of differential forms on $X$ (cf. \cite[\S 4]{GKRigid}): For an open affinoid $U=\Sp(A)$ the differential $d\colon A=\Gamma(U,\O_{X}) \to \Gamma(U,\Omega^{1}_{X/K})$ is universal for $K$-derivations of $A$ in finite $A$-modules. As usual one constructs the de Rham complex $\Omega^{*}_{X/K}$ and defines 
\[
H^{*}_{\dR}(X/K) := \Hyp^{*}(X, \Omega^{*}_{X/K}).
\]
We define a complex of $K$-vector spaces, functorial in the $K$-dagger space $X$
\[
R\Gamma_{\dR}(X/K):=\Gamma(X, \Gd_{Pt(X)}\Omega^{*}_{X/K}).
\]
Since the complex $\Gd_{Pt(X)}\Omega^{*}_{X/K}$ consists of flabby sheaves which are acyclic for $\Gamma(X,\,.\,)$ we have natural isomorphisms
\[
H^{*}(R\Gamma_{\dR}(X/K)) \cong H^{*}_{\dR}(X/K).
\]
\end{Nr}

\begin{Nr}[{Algebraic de Rham cohomology}]\label{Nr:AlgdeRham}
 Here $X$ is a smooth, separated scheme of finite type over $K$. Its de Rham cohomology is by definition the hypercohomology of the complex of K\"ahler differential forms:
\[
H^{*}_{\dR}(X/K) := \Hyp^{*}(X, \Omega^{*}_{X/K}).
\]
It is equipped with the \emph{Hodge filtration} constructed as follows: By Nagata's compactification theorem and Hironaka's resolution of singularities there exists a \emph{good compactification of $X$}, i.e. an open immersion $X\overset{j}{\hookrightarrow} \ol X$ of $X$ in a smooth, proper $K$-scheme $\ol X$ such that the complement $D = \ol X - X$ is a divisor with normal crossings. 
 On $\ol X$ one has the complex $\Omega^{*}_{\ol X/K}(\log D)$ of differential forms with logarithmic poles along $D$. There are isomorphisms 
 \[
 H^{*}_{\dR}(X/K) \cong \Hyp^{*}(\ol X,\Omega^{*}_{\ol X/K}(\log D)),
 \]
the Hodge--de Rham spectral sequence $E_{1}^{p,q} = H^{q}(\ol X, \Omega^{p}_{\ol X/K}(\log D)) \Rightarrow  H^{*}_{\dR}(X/K)$ degenerates at $E_{1}$, and the induced filtration $\Fil^\dot H^{*}_{\dR}(X/K)$ is independent of the choice of $\ol X$. It is given by 
\[
\Fil^{n} H^{*}_{\dR}(X/K) = \Hyp^{*}(\ol X, \Omega^{\geq n}_{\ol X/K}(\log D)),
\]
$\geq n$ denoting the naive truncation.

If $f\colon X\to Y$ is a morphism of smooth, separated $K$-schemes of finite type, one can construct good compactifications $X\hookrightarrow \ol X, Y \hookrightarrow \ol Y$ such that $f$ extends to a morphism $\ol f \colon \ol X \to \ol Y$. This implies that the Hodge filtration is functorial. Moreover, the induced map $f^*\colon H^*_\dR(Y/K) \to H^*_\dR(X/K)$ is strict, i.e., $f^*(F^iH^*_\dR(Y/K)) = F^iH^*_\dR(X/K) \cap \operatorname{im}(f^*)$. Indeed,  by the Lefschetz principle and GAGA, this follows from the corresponding fact over $\C$, proven by Deligne.

Since the good compactifications of $X$ form a directed set with respect to maps under $X$ and taking the colimit along a directed set is exact, to get functorial complexes computing algebraic de Rham cohomology together with its Hodge filtration we could take the colimit of the $\Gamma(\ol X, \Gd_{Pt(\ol X)}\Omega^{*}_{\ol X/K}(\log D))$ along the system of good compactifications $\ol X$ of $X$.
However, for the comparison with analytic de Rham cohomology it is technically easier to use the following variant (cf.~\cite[Prop. 4.2.3]{Mazzari}): 

Let $X^{\dag}$ be the dagger analytification of $X$ (cf. \ref{Nr:DaggerSpaces}) and $Pt(X^{\dag})$ as in \ref{Nr:Godement}, $Pt(\ol X)$ the usual set of points of the good compactification $\ol X$ of $X$. We can form the disjoint sum $Pt(X^{\dag}) \sqcup Pt(\ol X)$ viewed as site with the discrete topology to get a commutative diagram of sites
\begin{equation}\label{eq:DiagSites}
\begin{split}
\xymatrix{
Pt(X^{\dag})  \ar[d] \ar[rr] && Pt(X^{\dag}) \sqcup Pt(\ol X) \ar[d] \\
X^{\dag} \ar[r]^{\iota}_{\text{cf. \ref{Nr:DaggerSpaces}}} & X \ar[r]^{j} & \ol X.
}
\end{split}
\end{equation}
There are natural morphisms 
$\Omega^{*}_{\ol X/K}(\log D) \to j_{*}\Omega^{*}_{X/K} \to j_{*}\iota_{*}\Omega^{*}_{X^{\dag}/K}$ which together with \eqref{eq:DiagSites} induce a natural map
\begin{equation}\label{eq:GdCompMap}
\Gd_{Pt(X^{\dag}) \sqcup Pt(\ol X)}\Omega^{*}_{\ol X/K}(\log D) \to j_{*}\iota_{*}\Gd_{Pt(X^{\dag})}\Omega^{*}_{X^{\dag}/K}.
\end{equation}
Thus we are led to define
\begin{equation}\label{eq:FilDRCompl}
\Fil^{n}R\Gamma_{\dR}(X/K) := \varinjlim_{X\hookrightarrow \ol X}\Gamma(\ol X, \Gd_{Pt(X^{\dag}) \sqcup Pt(\ol X)}\Omega^{\geq n}_{\ol X/K}(\log D))
\end{equation}•
where the limit runs over the directed set of good compactifications of $X$. 

It follows from the discussion above that there are natural isomorphisms
\[
H^{*}(\Fil^{n}R\Gamma_{\dR}(X/K)) \cong \Fil^{n}H^{*}_{\dR}(X/K),
\]
and \eqref{eq:GdCompMap} induces natural comparison maps
\begin{equation}\label{eq:ComplCompMap}
\Fil^{n}R\Gamma_{\dR}(X/K) \to R\Gamma_{\dR}(X^{\dag}/K).
\end{equation}
\end{Nr}

\section{The relative Chern character}
\label{sec:Construction}

As before $R$ denotes a complete discrete valuation ring with field of fractions $K$ of characteristic $0$ and residue field $k$ of characteristic $p>0$.
Let $\Sm_R$ be the category of smooth, separated $R$-schemes of finite type.
For $X \in \Sm_{R}$ we have its generic fibre $X_{K}$ with dagger analytification $X_{K}^{\dag}$, and its weak completion $\widehat X$ with generic fibre $\widehat X_{K}$, related by the following morphisms of locally G-ringed spaces
\[
\widehat X_{K} \subseteq X_{K}^{\dag} \xrightarrow{\iota} X_{K}.
\]
In particular, we have morphisms of complexes
\begin{equation}\label{eq:CompDeRhamRigid}
\Fil^{n}R\Gamma_{\dR}(X_{K}/K) \xrightarrow{\eqref{eq:ComplCompMap}} R\Gamma_{\dR}(X_{K}^{\dag}/K) \xrightarrow{\text{by functoriality}} R\Gamma_{\dR}(\widehat X_{K}/K).
\end{equation}
\begin{Nr}\label{dez1201}
We denote by $\Ch$ the category of complexes of abelian groups.
For a morphism $A\xrightarrow{f} B$ in $\Ch$ we denote by $\MF(A\to B) := \Cone(A \to B)[-1]$ the mapping fibre. It has the following property: If $C$ is a complex, the morphisms $C \to \MF(A\to B)$ are in one-to-one correspondence with pairs $(g,h)$ where $g\colon C \to A$ is a morphism of complexes and $h\colon C \to B[-1]$ is a homotopy such that $dh+hd=f\circ g$.
\end{Nr}
\begin{dfn} \label{nov0905}
For every integer $n$ we define a functor $R\Gamma_{\rel}(\,.\,,n)\colon \Sm_{R}^{\op} \to \Ch$ by
\[
R\Gamma_{\rel}(X,n) := \MF(\Fil^{n}R\Gamma_{\dR}(X_{K}/K)\xrightarrow{\eqref{eq:CompDeRhamRigid}} R\Gamma_{\dR}(\widehat X_{K}/K))
\]
and \emph{relative cohomology groups} $H^{*}_{\rel}(X,n) := H^{*}(R\Gamma_{\rel}(X,n))$. 
\end{dfn}
\begin{rems}\label{nov0203}
(i) These are closely related to rigid syntomic cohomology, see Lemma \ref{nov0202} below.
The complex $R\Gamma_{\rel}(X,n)$ can also be interpreted as the syntomic $P$-complex $\mathbb R\Gamma_{\mathrm f,1}(X,n)$ of \cite[2.2]{BesserColeman} for the polynomial $P=1$.

\noindent (ii) Since $X/R$ is smooth, the de Rham cohomology of $\widehat X_{K}$ is just the rigid cohomology of the special fibre $X_{k}$ (see Section \ref{sec:rigidcohom} below). Hence the relative cohomology groups sit in exact sequences
\[
\dots \to H^{i}_{\rel}(X,n) \to \Fil^{n}H^{i}_{\dR}(X_{K}/K) \to H^{i}_{\rig}(X_{k}/K) \to \dots
\]

\noindent (iii) If $X/R$ is proper then $\widehat X_{K} = X_{K}^{\dag}$ (cf. \ref{Nr:DaggerSpaces}) and by GAGA \cite[Kor. 4.5]{GK} $R\Gamma_{\dR}(X_{K}^{\dag}/K) \simeq R\Gamma_{\dR}(X_{K}/K)$, where $\simeq$ denotes a quasi-isomorphism. Hence $R\Gamma_{\rel}(X, n) \simeq \MF(\Fil^{n}R\Gamma_{\dR}(X_{K}/K) \to R\Gamma_{\dR}(X_{K}/K))$ in this case, and the degeneration of the Hodge--de Rham spectral sequence yields isomorphisms
\[
 H^i_\rel(X,n) \cong H^{i-1}_\dR(X_K/K)/\Fil^nH^{i-1}_\dR(X_K/K).
\]

\noindent (iv) For an interpretation of the relative cohomology in terms of stable $\mathbf A^{1}$-homotopy theory we refer the reader to \cite{DegliseMazzari}.
\end{rems}

The goal of this section is to construct relative Chern character maps which will be homomorphisms
\[
\ch_{n,i}^{\rel}\colon K_{i}^{\rel}(X) \to H^{2n-i}_{\rel}(X,n).
\]
We first describe an abstract formalism to obtain homomorphisms from the homotopy groups of certain simplicial sets to the cohomology of suitable functorial complexes and then specialize this to the construction of the relative Chern character and, in the next section, of the syntomic regulator. This formalization of the constructions makes it easier to compare them afterwards. 

\begin{Nr}
\label{dez0701}
We view complexes in $\Ch$ either homologically $\dots\to C_i \xrightarrow{d} C_{i-1} \to \dots$ or cohomologically $\dots\to C^{-i} \xrightarrow{d} C^{-i+1}\to \dots$ using the convention $C_i = C^{-i}$. Given $A, B \in \Ch$, we denote by $\ul\Hom(A,B)$ the mapping complex. In degree $i$ it is given by $\prod_{p} \Hom(A^{p},B^{p+i})$ with differential $f\mapsto f\circ d^{A} - (-1)^{i}d^{B}\circ f$.
In particular, cycles in degree $i$ are given by $Z^i\ul\Hom(A,B)=\Hom_{\Ch}(A,B[i])$.
If $C^{\dot,\dot}$ is a double complex, the differential of the total complex is given on $C^{p,q}$ by $d^{\mathrm{horiz}} + (-1)^{p}d^{\mathrm{vert}}$ ($p$ is the horizontal coordinate).
\end{Nr}
\begin{Nr}
\label{nov1205}
We consider the following setup: $\sS$
is a category, $a\colon \Sm_R \to \sS$ a functor, 
$\Gamma_0\colon \Sm_R^{\op} \to \Ch$ and $\Gamma_1\colon \sS^\op \to \Ch$ are functorial complexes, and we have a natural transformation $\Gamma_0 \to \Gamma_1\circ a$.

For example, $\sS$ could be the category of smooth weak formal $R$-schemes $\Smf_{R}$, $a\colon \Sm_R \to \Smf_{R}$ the 
weak completion functor $X \mapsto \widehat X$, $\Gamma_0 = \Fil^nR\Gamma_\dR((.)_K/K)$, $\Gamma_1 = R\Gamma_\dR((.)_K/K)$ and the natural transformation $\Gamma_0 \to \Gamma_1\circ a$  given by \eqref{eq:CompDeRhamRigid}.

We fix a morphism $E \to B$ in $\Sm_R$. In applications, this will typically be the morphism of simplicial scheme $E_\dot\GL_{r,R} \to B_\dot\GL_{r,R}$. Consider a map $X \xrightarrow{f} B$ in $\Sm_R$ together with a map $a(X) \xrightarrow{g} a(E)$ in $\sS$ such that
\[
 \xymatrix@C+0.5cm@R-0.4cm{
& a(E)\ar[d]\\
a(X) \ar[ur]^g \ar[r]_{a(f)} & a(B)
}
\]
commutes, in other words, an element $(f,g) \in B(X) \times_{a(B)(a(X))} a(E)(a(X))$, where $B(X):= \Hom_{\Sm_R}(X,B)$, etc. By abuse of notation we write $(B\times_{a(B)} a(E))(X)$ for this set.
Then the pair $(f,g)$ gives a commutative diagram
\[
 \xymatrix@C+0.3cm@R-0.4cm{
\Gamma_0(B) \ar[r] \ar[d]_{f^*} & \Gamma_1(a(B)) \ar[d]_{a(f)^*} \ar[r] & \Gamma_1(a(E)) \ar[dl]^-{g^*}\\
\Gamma_0(X) \ar[r] & \Gamma_1(a(X))
}
\]
in $\Ch$ and hence a morphism of complexes, i.e. a zero cycle in the $\ul\Hom$-complex,
\[
 \MF\big(\Gamma_0(B) \to \Gamma_1(a(E))\big) \to \MF \big(\Gamma_0(X) \to \Gamma_1(a(X))\big).
\]
This construction induces a morphism of complexes
\begin{multline*}
 \Z\left[(B\times_{a(B)} a(E))(X)\right] \to  \\
\ul\Hom\left(\MF \big(\Gamma_0(B) \to \Gamma_1(a(E))\big),  \MF \big(\Gamma_0(X) \to \Gamma_1(a(X))\big) \right)
\end{multline*}
where $\Z[\,.\,]$ is the free abelian group considered as a complex in degree $0$.

If $E_\dot \to B_\dot$ is a morphism of simplicial objects in $\Sm_R$, and $X^\dot$ is a cosimplicial object in $\Sm_R$, then $([p],[q])\mapsto\left(B_p\times_{a(B_{p})} a(E_p)\right)(X^q)$ is a bisimplicial set, and we get a natural map of complexes
\begin{multline}\label{dez0702}
 \Tot\Z\left[\left(B_\dot\times_{a(B_{\dot})} a(E_\dot)\right)(X^\dot)\right] \to \\
\ul\Hom\left(\MF\big(\Gamma_0(B_\dot) \to \Gamma_1(a(E_\dot))\big),  \MF \big(\Gamma_0(X^\dot) \to \Gamma_1(a(X^\dot))\big) \right).
\end{multline}
Here $\Gamma_0(B_\dot), \Gamma_0(X^\dot)$, etc. are defined as the direct sum total complexes 
and we view the simplicial, respectively cosimplicial direction as the horizontal one. E.g.~the degree $n$-component $(\Gamma_0(X^\dot))^n$ of the total complex is the possibly infinite direct sum $\bigoplus_{p,q \in \Z, p+q=n} \Gamma_0^q(X^{-p})$.
On the left hand side, the vertical direction is that coming from $X^{\dot}$. Using the sign conventions from \ref{dez0701}, we have to introduce a sign $(-1)^{q(q-1)/2}$ in bidegree $(p,q)$ in order that \eqref{dez0702} is a morphism of complexes.

On homology, \eqref{dez0702} induces for every integer $*$ a map 
\begin{multline}\label{nov0801}
 H_i\left(\Tot\Z\left[\left(B_\dot\times_{a(B_{\dot})} a(E_\dot)\right)(X^\dot)\right]\right) \to \\
\Hom \Big(H^*\left(\MF \big(\Gamma_0(B_\dot) \to \Gamma_1(a(E_\dot))\big)\right), H^{*-i}\left(\MF \big(\Gamma_0(X^\dot) \to \Gamma_1(a(X^\dot))\big) \right) \Big).
\end{multline}
In particular, any class $c\in H^{2n}\left(\MF\left(\Gamma_0(B_\dot) \to \Gamma_1(a(E_\dot))\right)\right)$ gives by composing \eqref{nov0801} with the evaluation at $c$ a map
\begin{multline}\label{nov0901}
\phantom{.}^{*}c\colon H_i\left(\Tot\Z\left[\left(B_\dot\times_{a(B_{\dot})} a(E_\dot)\right)(X^\dot)\right]\right) \to \\
H^{2n-i}\left(\MF \big(\Gamma_0(X^\dot) \to  \Gamma_1(a(X^\dot))\big)\right).
\end{multline}
\end{Nr}%
\begin{dfn}
A \emph{regulator datum} is a tuple $\omega$ consisting of (1) a category $\sS$ together with a functor $a\colon \Sm_{R} \to \sS$, (2) functors $\Gamma_{0}\colon \Sm_{R}^{\op} \to \Ch, \Gamma_{1}\colon \sS^{\op} \to \Ch$ together with a natural transformation $\Gamma_{0} \to \Gamma_{1}\circ a$, (3) a morphism of simplicial objects $E_{\dot}\to B_{\dot}$ in $\Sm_{R}$, and (4) a class $c \in H^{2n}\left(\MF\left(\Gamma_0(B_\dot) \to \Gamma_1(a(E_\dot))\right)\right)$.

To simplify notation, we denote such a regulator datum by $\omega = (\sS, E_{\dot} \to B_{\dot}, \Gamma_{0} \to \Gamma_{1}\circ a, c)$.
\end{dfn}
\begin{lemma}\label{nov1602}
A regulator datum $\omega$ induces for every cosimplicial object $X^{\dot}$ in $\Sm_{R}$ and $i\geq 0$  a homomorphism
\[
\reg_{i}(\omega)\colon \pi_i\left(\left(B_\dot\times_{a(B_{\dot})} a(E_\dot)\right)(X^\dot)\right) \to
H^{2n-i}\left(\MF \big(\Gamma_0(X^\dot) \to  \Gamma_1(a(X^\dot))\big)\right)
\]
\end{lemma}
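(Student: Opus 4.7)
The plan is to combine the chain-level homomorphism \eqref{dez0702} already constructed in \ref{nov1205} with the standard passage from homotopy groups of a bisimplicial set to the homology of its total complex, via the Hurewicz map and the Eilenberg--Zilber theorem.

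First, set
\[
S_{p,q} := \left(B_p \times_{a(B_p)} a(E_p)\right)(X^q),
\]
a bisimplicial set with canonical basepoint inherited from $B_\dot$ and $E_\dot$. The Hurewicz map for the pointed simplicial set $\diag S_{\dot\dot}$ gives a natural homomorphism
\[
h\colon \pi_i(\diag S_{\dot\dot}) \to H_i\bigl(\Z[\diag S_{\dot\dot}]\bigr)
\]
for $i\geq 1$ (and a pointed map for $i=0$). The Eilenberg--Zilber theorem provides a natural quasi-isomorphism $\Z[\diag S_{\dot\dot}] \xrightarrow{\qis} \Tot\Z[S_{\dot\dot}]$ via the shuffle map, so composition yields
\[
\pi_i(\diag S_{\dot\dot}) \longrightarrow H_i\bigl(\Tot\Z[S_{\dot\dot}]\bigr).
\]

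Second, I would postcompose with the homomorphism $\phantom{.}^{*}c$ of \eqref{nov0901} attached to the class $c$ of the regulator datum. By construction this map lands in $H^{2n-i}\bigl(\MF(\Gamma_0(X^\dot) \to \Gamma_1(a(X^\dot)))\bigr)$, and we \emph{define} $\reg_i(\omega)$ to be the composite.

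The only remaining points are compatibilities: the Hurewicz map and the Eilenberg--Zilber shuffle map are both natural in bisimplicial sets; the map \eqref{dez0702} is already a chain map thanks to the sign convention $(-1)^{q(q-1)/2}$ introduced in \ref{nov1205}; and each intermediate map is a group homomorphism for $i\geq 1$, so the composite is as well. I do not foresee any real obstacle: all substantive work has been carried out in the construction of \eqref{dez0702}, and the lemma records that one may feed its source through the standard Hurewicz--Eilenberg--Zilber machine. The main conceptual point worth emphasising in the write-up is merely the naturality of the whole construction in both the cosimplicial object $X^\dot$ and the regulator datum $\omega$, so that the formalism is usable in the subsequent comparison of the relative Chern character and the rigid syntomic regulator.
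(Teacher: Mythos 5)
Your proposal is correct and follows exactly the paper's own proof: the composite of the Hurewicz map on the diagonal, the Eilenberg--Zilber isomorphism to the homology of the total complex, and evaluation at the class $c$ via \eqref{nov0901}. Nothing further is needed.
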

\begin{proof}
The desired homomorphism $\reg_{i}(\omega)$ is the composition 
\begin{align*}
 & \pi_i\left(\left(B_\dot\times_{a(B_{\dot})} a(E_\dot)\right)(X^\dot)\right)  \\
\longrightarrow& H_i\left(\Z\left[\diag \left(B_\dot\times_{a(B_{\dot})} a(E_\dot)\right)(X^\dot)\right]\right) \qquad \text{ Hurewicz}\\
\overset{\cong}{\longrightarrow}& H_i\left(\Tot \Z\left[\left(B_\dot\times_{a(B_{\dot})} a(E_\dot)\right)(X^\dot)\right]\right) \qquad \text{ by Eilenberg-Zilber}\\
\overset{\!\phantom{.}^{*}c}{\longrightarrow}& H^{2n-i}\left(\MF\left(\Gamma_0(X^\dot) \to  \Gamma_1(a(X^\dot))\right)\right) \qquad \text{ by \eqref{nov0901}} \qedhere
\end{align*}
\end{proof}

We record the following naturality properties which are easily established. They will be used in the comparison of the relative with the syntomic Chern character in the next section.
Consider two regulator data $\omega = (\sS, E_{\dot}\to B_{\dot}, \Gamma_{0}\to\Gamma_{1}\circ a, c)$ and $\omega' = (\sS', E_{\dot}\to B_{\dot}, \Gamma_{0}'\to\Gamma_{1}'\circ a', c')$ with the same $E_{\dot}\to B_{\dot}$.

Assume moreover, that we have a functor $b\colon \sS \to \sS'$ such that $b\circ a \cong a'$, natural transformations $\Gamma_0 \to \Gamma_0'$, $\Gamma_1 \to \Gamma_1'\circ b$ and a natural homotopy $h$ between the compositions $\Gamma_0 \to \Gamma_0' \to \Gamma_1'\circ a'$ and $\Gamma_0 \to \Gamma_1\circ a \to \Gamma_1'\circ b\circ a\cong \Gamma_{1}'\circ a'$.
For every map $Z\to Y$ in $\Sm_R$ these induce a map $\MF\bigl(\Gamma_0(Y) \to \Gamma_1(a(Z))\bigr) \to \MF\big(\Gamma_0'(Y) \to \Gamma_1'(a'(Z))\big)$ (cf. \ref{dez1201}).
\begin{lemma}\label{nov1301}
If $c$ maps to $c'$ by the natural map $H^{2n}\left(\MF\left(\Gamma_0(B_\dot) \to \Gamma_1(a(E_\dot))\right)\right)\to H^{2n}\left(\MF\left(\Gamma_0'(B_\dot) \to \Gamma_1'(a'(E_\dot))\right)\right)$ then for every $X^{\dot}$ the diagram
\[
 \xymatrix@C+0.4cm@R-0.4cm{
\pi_i\left(\left(B_\dot\times_{a(B_{\dot})} a(E_\dot)\right)(X^\dot)\right) \ar[r]^-{\reg_{i}(\omega)} \ar[d] & H^{2n-i}\left(\MF\left(\Gamma_0(X^\dot) \to  \Gamma_1(a(X^\dot))\right)\right) \ar[d] \\
\pi_i\left(\left(B_\dot\times_{a'(B_{\dot})} a'(E_\dot)\right)(X^\dot)\right) \ar[r]^-{\reg_{i}(\omega')} & H^{2n-i}\left(\MF\left(\Gamma_0'(X^\dot) \to  \Gamma_1'(a'(X^\dot))\right)\right)
}
\]
commutes.
\end{lemma}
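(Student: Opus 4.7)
The plan is to show that all the ingredients building up the map $\reg_i$ in Lemma~\ref{nov1602} are natural with respect to the data $(b,\Gamma_0\to\Gamma_0',\Gamma_1\to\Gamma_1'\circ b, h)$ relating $\omega$ and $\omega'$, and then to use the hypothesis $c\mapsto c'$ to identify the two evaluations.

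First, I would make precise the morphism of mapping fibres asserted just before the lemma. Using the description from \ref{dez1201}, an element of $\MF(\Gamma_0(Y)\to\Gamma_1(a(Z)))$ in degree $n$ is a pair $(\alpha,\beta)$ with $\alpha\in\Gamma_0(Y)^n$ and $\beta\in\Gamma_1(a(Z))^{n-1}$. The natural transformations $\phi\colon\Gamma_0\to\Gamma_0'$ and $\psi\colon\Gamma_1\to\Gamma_1'\circ b$, together with the homotopy $h$, give a well-defined chain map
\[
\MF(\Gamma_0(Y)\to\Gamma_1(a(Z)))\longrightarrow \MF(\Gamma_0'(Y)\to\Gamma_1'(a'(Z))),\quad (\alpha,\beta)\longmapsto(\phi(\alpha),\,\psi(\beta)+h(\alpha)),
\]
the point being that $h$ is precisely what is needed to correct the fact that the square of natural transformations $\Gamma_0\to\Gamma_1'\circ a'$ only commutes up to homotopy. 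One checks by a direct calculation from the definition of $\MF$ that this rule commutes with the differentials. By naturality of $\phi,\psi,h$ in $Z\to Y$, this assignment is a natural transformation of functors of pairs $(Z\to Y)$ in $\Sm_R$.

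Next I would verify that for each $(f,g)\in (B\times_{a(B)}a(E))(X)$, the zero-cycle in $\ul\Hom$ produced in \ref{nov1205} is compatible with this natural transformation, i.e.\ the square
\[
\xymatrix@C+0.2cm{
\MF(\Gamma_0(B)\to\Gamma_1(a(E))) \ar[r]\ar[d] & \MF(\Gamma_0(X)\to\Gamma_1(a(X)))\ar[d] \\
\MF(\Gamma_0'(B)\to\Gamma_1'(a'(E))) \ar[r] & \MF(\Gamma_0'(X)\to\Gamma_1'(a'(X)))
}
\]
commutes. This is a routine check on pairs $(\alpha,\beta)$ using naturality of $\phi,\psi,h$ in $(f,g)$. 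Passing to the bisimplicial/cosimplicial situation and taking $\ul\Hom$, this shows that the map \eqref{dez0702} for $\omega$ followed by post-composition with the $\MF$-vertical equals pre-composition with the $\MF$-vertical followed by \eqref{dez0702} for $\omega'$. In cohomology, the hypothesis that $c$ maps to $c'$ under the vertical then forces the evaluation-at-$c$ map \eqref{nov0901} for $\omega$ to agree with the evaluation-at-$c'$ map for $\omega'$, up to the right-hand vertical of the lemma.

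Finally, the Hurewicz and Eilenberg--Zilber maps in the proof of Lemma~\ref{nov1602} are natural in morphisms of bisimplicial sets; since the left-hand vertical of the lemma is the identity on the set $(B_\dot\times_{a(B_\dot)}a(E_\dot))(X^\dot)\to (B_\dot\times_{a'(B_\dot)}a'(E_\dot))(X^\dot)$ induced by $b$ (observing that $b\circ a\cong a'$), these naturality statements assemble the ingredients above into the desired commutative square. The only real bookkeeping obstacle is keeping the signs from \ref{dez0701} and the sign $(-1)^{q(q-1)/2}$ from \ref{nov1205} consistent when writing out the chain-level formulas; once that is done, the argument is entirely formal.
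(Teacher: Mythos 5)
Your argument is correct, and it is precisely the routine verification that the paper omits (the lemma is stated with the remark that these naturality properties ``are easily established''). You correctly identify the one non-trivial ingredient, namely that the chain map of mapping fibres must be $(\alpha,\beta)\mapsto(\phi(\alpha),\psi(\beta)+h(\alpha))$ so that the homotopy $h$ absorbs the failure of strict commutativity of $\Gamma_0\to\Gamma_1'\circ a'$, after which everything — the compatibility with the zero-cycles of \ref{nov1205}, with \eqref{dez0702}, with Hurewicz and Eilenberg--Zilber, and the final evaluation at $c$ versus $c'$ — follows by naturality exactly as you describe.
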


We now consider regulator data $\omega = (\sS, E_{\dot}\to B_{\dot}, \Gamma_{0}\to\Gamma_{1}\circ a, c)$ and $\omega' = (\sS, E'_{\dot}\to B'_{\dot}, \Gamma_{0}\to\Gamma_{1}\circ a, c')$ with the same category $\sS$ and complexes $\Gamma_{0}, \Gamma_{1}$, and assume that we have a commutative diagram of simplicial objects in $\Sm_R$
\[
 \xymatrix@R-0.3cm@C-0.3cm{
E_\dot \ar[r] \ar[d] & E'_\dot\ar[d]\\
B_\dot \ar[r] & B'_\dot.
}
\]
\begin{lemma}\label{nov1302}
If $c'\in H^{2n}\left(\MF\left(\Gamma_0(B'_\dot) \to \Gamma_1(a(E'_\dot))\right)\right)$ maps to the class $c \in H^{2n}\left(\MF\left(\Gamma_0(B_\dot) \to \Gamma_1(a(E_\dot))\right)\right)$ by the induced map, then for every $X^{\dot}$ the diagram
\[
 \xymatrix{
\pi_i\left(\left(B_\dot\times_{a(B_{\dot})} a(E_\dot)\right)(X^\dot)\right) \ar[r] \ar[dr]_{\reg_{i}(\omega)}&
\pi_i\left(\left(B'_\dot\times_{a(B'_{\dot})} a(E'_\dot)\right)(X^\dot)\right) \ar[d]^{\reg_{i}(\omega')} \\
&  H^{2n-i}\left(\MF\left(\Gamma_0(X^\dot) \to  \Gamma_1(a(X^\dot))\right)\right)
}
\]
commutes.

\end{lemma}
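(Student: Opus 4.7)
The plan is to unwind the construction of $\reg_i(\omega)$ and $\reg_i(\omega')$ from Lemma~\ref{nov1602} and exhibit each of the three ingredients (the Hurewicz map, the Eilenberg--Zilber shuffle, the evaluation at the class) as natural in the simplicial pair $E_\dot \to B_\dot$. Once this naturality is in place, the conclusion follows from the single hypothesis that $c'$ maps to $c$ under the induced map on $H^{2n}$.

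Concretely, write $\phi\colon E_\dot \to E'_\dot$ and $\psi\colon B_\dot \to B'_\dot$ for the given morphisms. Post-composition with $(\psi,a(\phi))$ yields a morphism of bisimplicial sets
\[
(B_\dot \times_{a(B_\dot)} a(E_\dot))(X^\dot) \to (B'_\dot \times_{a(B'_\dot)} a(E'_\dot))(X^\dot),
\]
while contravariant functoriality of $\Gamma_0$ and $\Gamma_1\circ a$ produces a morphism of mapping fibres in the opposite direction
\[
\MF\bigl(\Gamma_0(B'_\dot) \to \Gamma_1(a(E'_\dot))\bigr) \to \MF\bigl(\Gamma_0(B_\dot) \to \Gamma_1(a(E_\dot))\bigr).
\]
The key step is to check that the pairing \eqref{dez0702} is natural in this datum, i.e., that for fixed $X^\dot$ the square
\[
\xymatrix@C-0.2cm{
\Tot\Z[(B_\dot\times_{a(B_\dot)} a(E_\dot))(X^\dot)] \ar[r] \ar[d] & \ul\Hom\bigl(\MF(\Gamma_0(B_\dot)\to\Gamma_1(a(E_\dot))),\, M_X\bigr) \ar[d] \\
\Tot\Z[(B'_\dot\times_{a(B'_\dot)} a(E'_\dot))(X^\dot)] \ar[r] & \ul\Hom\bigl(\MF(\Gamma_0(B'_\dot)\to\Gamma_1(a(E'_\dot))),\, M_X\bigr)
}
\]
commutes in $\Ch$, where $M_X:=\MF(\Gamma_0(X^\dot)\to\Gamma_1(a(X^\dot)))$ and the right vertical arrow is precomposition with the pullback. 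This is a direct diagram chase: a pair $(f,g)$ in the upper left is sent along either route to the morphism of mapping fibres induced by $(\psi\circ f, a(\phi)\circ g)$, the two descriptions agreeing by contravariant functoriality of $\Gamma_0$ and $\Gamma_1$.

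Granted this square, the lemma follows by evaluating at classes: the Hurewicz and Eilenberg--Zilber maps are natural for morphisms of bisimplicial sets, and the commutativity above on the level of $\ul\Hom$-complexes ensures that pairing with $c$ on the unprimed side agrees with pushing to the primed side and pairing with $c'$, given the assumption that $c'\mapsto c$ in $H^{2n}$. The only delicate point, rather than a substantive obstacle, is the sign bookkeeping from \ref{dez0701}--\ref{nov1205} (in particular the factor $(-1)^{q(q-1)/2}$ in bidegree $(p,q)$): one must verify that the naturality square lifts to a square of chain maps, not merely of graded abelian groups. Since this sign depends solely on the cosimplicial coordinate coming from $X^\dot$, which is untouched by the morphisms $\phi,\psi$, the compatibility is automatic.
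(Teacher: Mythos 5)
Your proposal is correct and is precisely the verification the paper leaves implicit (it only remarks that the naturality properties "are easily established"): naturality of Hurewicz and Eilenberg--Zilber for the morphism of bisimplicial sets induced by $(\psi,a(\phi))$, the adjunction square showing that the pairing \eqref{dez0702} intertwines post-composition on simplices with pullback on the mapping fibres, and the hypothesis $c'\mapsto c$ to conclude. Your observation that the sign $(-1)^{q(q-1)/2}$ only involves the cosimplicial coordinate of $X^\dot$, and is therefore untouched, correctly disposes of the one point where a check is needed.
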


We now construct a regulator datum $\omega^{\rel}_{n,(r)}$ that produces the relative Chern character.
Let $\Smf_{R}$ be the category of smooth weak formal $R$-schemes, $a\colon \Sm_R \to \Smf_{R}$ the 
weak completion functor $X \mapsto \widehat X$, $\Gamma_0$  given by $X \mapsto \Fil^nR\Gamma_\dR(X_K/K)$, $\Gamma_1$ given by $\mathscr X \mapsto R\Gamma_\dR(\mathscr X_K/K)$ and the natural transformation $\Gamma_0 \to \Gamma_1\circ a$  given by \eqref{eq:CompDeRhamRigid}. For $E_\dot \to B_\dot$ we take $E_\dot\GL_{r} \to B_\dot\GL_{r}$.
As cosimplicial object in $\Sm_R$ we will always take $X^\dot=X\times_R\Delta_{R}^\dot = \Spec(A[\Delta^\dot])$ for  some affine $X= \Spec(A)$ in $\Sm_R$.

With these choices we have
\begin{align}
 & \left(B_\dot\times_{a(B_{\dot})} a(E_\dot)\right)(X^\dot) = F_{r}(A), && \text{ cf. \eqref{dez1202},} \label{nov1201}\\
& \MF\left(\Gamma_0(X^\dot) \to  \Gamma_1(a(X^\dot))\right) = R\Gamma_\rel(X\times_{R}\Delta_{R}^\dot, n) && \text{ by Definition \ref{nov0905}.}\label{nov1202}
\end{align}
For the equality \eqref{nov1201} we use that $\widehat\GL_r(\widehat X) = \Hom_{\Smf_{R}}(\widehat X, \widehat\GL_r) \cong \GL_r(A^\dag)$.
To get a regulator datum we need to specify the class $c$. This is accomplished by 
\begin{lemma}\label{nov1204}
For each $r\geq 1$ the natural map 
\begin{multline*}
H^{2n}\left(\MF(\Fil^{n}R\Gamma_{\dR}(B_{\dot}\GL_{r,K}/K) \to R\Gamma_{\dR}(E_{\dot}\widehat\GL_{r,K}/K))\right) \to \\
H^{2n}(\Fil^{n}R\Gamma_{\dR}(B_{\dot}\GL_{r,K}/K)) = \Fil^{n}H^{2n}_{\dR}(B_{\dot}\GL_{r,K}/K)
\end{multline*}
is an isomorphism. 
\end{lemma}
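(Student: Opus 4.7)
The plan is to use the long exact sequence associated to the mapping fibre and reduce the statement to the vanishing of the de Rham cohomology of $E_\bullet\widehat\GL_{r,K}$ in positive degrees. From Definition of $\MF$ (see \ref{dez1201}), we get a long exact sequence
\[
\cdots \to H^{2n-1}\bigl(R\Gamma_{\dR}(E_\bullet\widehat\GL_{r,K}/K)\bigr) \to H^{2n}(\MF) \to \Fil^{n}H^{2n}_{\dR}(B_\bullet\GL_{r,K}/K) \to H^{2n}\bigl(R\Gamma_{\dR}(E_\bullet\widehat\GL_{r,K}/K)\bigr) \to \cdots
\]
Thus the claim reduces to showing that $H^{i}\bigl(R\Gamma_{\dR}(E_\bullet\widehat\GL_{r,K}/K)\bigr)=0$ for $i=2n-1, 2n$; since $n\geq 1$ in the cases of interest, it suffices to verify this for all $i>0$.

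To compute $H^{*}_{\dR}(E_\bullet\widehat\GL_{r,K}/K)$, I would use the standard hypercohomology spectral sequence associated to the simplicial direction,
\[
E_{1}^{p,q} = H^{q}_{\dR}(\widehat\GL_{r,K}^{p+1}/K) \Rightarrow H^{p+q}_{\dR}(E_\bullet\widehat\GL_{r,K}/K),
\]
which arises from the natural double complex filtration on the total complex $R\Gamma_{\dR}(E_\bullet\widehat\GL_{r,K}/K)$ defined via the Godement construction of \ref{mar2005}. The $d_{1}$-differential is the alternating sum of the cofaces induced by the simplicial face maps of $E_\bullet\widehat\GL_{r}$.

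Now the key input is that $E_\bullet G$ admits an extra degeneracy for any group object $G$: insertion of the identity in an outermost factor gives maps $\widehat\GL_{r}^{p+1} \to \widehat\GL_{r}^{p+2}$ satisfying the usual simplicial identities, exhibiting $E_\bullet\widehat\GL_{r}$ as simplicially homotopy equivalent (over $\Smf_{R}$) to the constant simplicial weak formal scheme $\Spwf(R)$. Applying the functor $H^{q}_{\dR}(-/K)$ componentwise, this extra degeneracy induces a cosimplicial contraction of $[p]\mapsto H^{q}_{\dR}(\widehat\GL_{r,K}^{p+1}/K)$ onto $H^{q}_{\dR}(\Sp K/K)$. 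Consequently the $E_{1}$-columns are acyclic in positive cosimplicial degree, giving $E_{2}^{p,q}=0$ for $p>0$ and $E_{2}^{0,q}=H^{q}_{\dR}(\Sp K/K)$, which is $K$ for $q=0$ and zero otherwise.

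Therefore $H^{i}_{\dR}(E_\bullet\widehat\GL_{r,K}/K)=0$ for $i>0$, which by the long exact sequence forces the comparison map to be an isomorphism. The only step requiring genuine care is verifying that the extra degeneracy of $E_\bullet\widehat\GL_{r}$ descends to an honest cosimplicial contraction on the functorial de Rham complex built from Godement resolutions; this follows from the naturality of $\Gd_{Pt(-)}$ noted at the end of \ref{Nr:Godement}, so I do not expect a real obstacle.
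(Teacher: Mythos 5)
Your proof is correct and follows the same route as the paper: the long exact sequence of the mapping fibre reduces everything to the vanishing of $H^{i}_{\dR}(E_{\dot}\widehat\GL_{r,K}/K)$ for $i>0$, which the paper deduces from the contractibility of the simplicial dagger space $E_{\dot}\widehat\GL_{r,K}$ (citing \cite[Lemma 2.11]{TammeDiss}) and which you verify directly via the descent spectral sequence and the extra degeneracy of $E_{\dot}G$. Your closing worry is unnecessary: since you apply the extra degeneracy only on the $E_{1}$-page, functoriality of $H^{q}_{\dR}$ for dagger spaces suffices and no compatibility with the Godement resolutions is needed.
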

\begin{proof}
This follows from the long exact sequence for the cohomology of a cone together with the fact that $E_{\dot}\widehat\GL_{r,K} $ is a contractible simplicial dagger space, hence has no cohomology in positive degrees (cf. \cite[Lemma 2.11]{TammeDiss}).
\end{proof}
In particular, there is a unique class
\begin{equation}\label{nov1401}
 \ch_{n,(r)}^\rel \in H^{2n}\left(\MF(\Fil^{n}R\Gamma_{\dR}(B_{\dot}\GL_{r,K}/K) \to R\Gamma_{\dR}(E_{\dot}\widehat\GL_{r,K}/K))\right)
\end{equation}
which is mapped to the degree $2n$ component $\ch_{n,(r)}^\dR \in \Fil^{n}H^{2n}_{\dR}(B_{\dot}\GL_{r,K}/K)$ of the universal Chern character class in de Rham cohomology. Since the $\ch_{n,(r)}^{\dR}$ are compatible for varying $r\geq 1$, so are the $\ch_{n,(r)}^{\syn}$.

We also need the following lemma. At this point it is crucial to work with dagger spaces.
\begin{lemma}\label{nov1206}
The natural map $R\Gamma_{\rel}(X,n) \to R\Gamma_{\rel}(X \times \widehat\Delta^{\dot}_{R},n)$ is a quasi-isomorphism for any $X\in \Sm_{R}$.
\end{lemma}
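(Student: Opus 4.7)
The plan is to reduce the quasi-isomorphism for $R\Gamma_{\rel}$ to separate homotopy invariance statements for its two constituents---the Hodge filtered algebraic de Rham complex and the de Rham complex of the dagger generic fibre of the weak completion---and then recombine them via the five lemma. Since $R\Gamma_{\rel}(-,n)$ is defined as a mapping fibre (Definition~\ref{nov0905}) and the functor $\Tot$ commutes with the formation of mapping fibres, the map in question fits into a morphism of distinguished triangles; passing to the associated long exact sequences in cohomology, it suffices to prove that the two pullbacks
\[
\Fil^{n}R\Gamma_{\dR}(X_{K}/K) \longrightarrow \Tot_{[p]\in\Delta}\Fil^{n}R\Gamma_{\dR}(X_{K}\times_{K}\Delta^{p}_{K}/K)
\]
and
\[
R\Gamma_{\dR}(\widehat X_{K}/K) \longrightarrow \Tot_{[p]\in\Delta}R\Gamma_{\dR}(\widehat X_{K}\times\widehat\Delta^{p}_{K}/K)
\]
are both quasi-isomorphisms.

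For the algebraic part I would choose the good compactification $\mathbb{P}^{p}_{K}\supset\mathbb{A}^{p}_{K}\cong\Delta^{p}_{K}$ and invoke the K\"unneth formula for Hodge filtered algebraic de Rham cohomology. The vanishing of $H^{q}_{\dR}(\mathbb{A}^{p}_{K}/K)$ for $q>0$, together with $\Fil^{j}H^{0}_{\dR}(\mathbb{A}^{p}_{K}/K)=0$ for $j\geq 1$, forces the projection $X_{K}\times\Delta^{p}_{K}\to X_{K}$ to induce an isomorphism on $\Fil^{n}H^{*}_{\dR}$ for every $p\geq 0$. For the rigid part I would use that $\widehat\Delta^{p}_{K}\cong\Sp(K\<x_{1},\dots,x_{p}\>^{\dag})$ is an open dagger polydisc and invoke the Poincar\'e lemma for dagger spaces (following \cite{GKRigid}) to conclude that $H^{*}_{\dR}(\widehat\Delta^{p}_{K}/K) = K$ is concentrated in degree $0$; equivalently, this is the homotopy invariance of rigid cohomology applied to the special fibre $X_{k}\times\mathbb{A}^{p}_{k}$. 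In both cases the pullback along the projection becomes a quasi-isomorphism for each $p\geq 0$.

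With these ingredients, the final step is a piece of cosimplicial homological algebra: since the structural maps of $\Delta^{\dot}$ are all compatible with the projection onto the $X$-factor, every coface and codegeneracy in each cosimplicial complex induces the identity on cohomology. The associated cohomology groups therefore form a constant cosimplicial abelian group whose alternating-sum cochain complex is acyclic in positive degrees. The spectral sequence of the cosimplicial filtration on $\Tot$ then has $E_{2}^{p,q}$ concentrated at $p=0$ and degenerates, giving the required quasi-isomorphism. The main obstacle I anticipate is ensuring that the K\"unneth decomposition genuinely respects the Hodge filtration; once this is settled via the explicit logarithmic de Rham complex on $\mathbb{P}^{p}\supset\mathbb{A}^{p}$, everything else is either formal or directly available in the literature.
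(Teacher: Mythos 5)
Your proposal is correct and follows essentially the same route as the paper: reduce to the two components of the mapping fibre, establish homotopy invariance of each cosimplicial level (via \cite[Prop. 5.8]{GK} for the dagger part, and via the Hodge filtration on $\mathbb{A}^{p}\subset\mathbb{P}^{p}$ for the filtered algebraic part), and conclude with the column-filtration spectral sequence of the cosimplicial total complex, whose $E_{2}$-page is concentrated in the zeroth column. The only cosmetic divergence is that you justify the filtered homotopy invariance by a K\"unneth argument where the paper appeals directly to the degeneration of the Hodge--de Rham spectral sequence; also note that $\widehat\Delta^{p}_{K}=\Sp(K\<x_{1},\dots,x_{p}\>^{\dag})$ is the \emph{closed} unit dagger polydisc, not an open one.
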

\begin{proof}
It suffices to check this for both components of the cone separately. We first show that
$R\Gamma_{\dR}(\widehat X_{K}/K) \to R\Gamma_{\dR}(\widehat X_{K} \times \widehat\Delta_{K}^{\dot}/K)$
is a quasi-isomorphism. 

By construction $R\Gamma_{\dR}(\widehat X_{K} \times \widehat\Delta_{K}^{\dot}/K)$ is the direct sum total complex of a double complex in the second quadrant. The filtration by columns
gives a convergent spectral sequence in the second quadrant (cf. \cite[5.6.1]{Weibel} for the dual homological case)
\[
E_{1}^{p,q} = H^{q}_{\dR}(\widehat X_{K}\times\widehat\Delta_{K}^{-p}/K) \Rightarrow H^{p+q}(R\Gamma_{\dR}(\widehat X_{K} \times \widehat\Delta_{K}^{\dot}/K)).
\]
The differential $d_{1}$ is induced from the cosimplicial structure of $\widehat\Delta_{K}^{\dot}$. The homotopy invariance of de Rham cohomology of dagger spaces \cite[Prop. 5.8]{GK} implies that $d_{1}^{p,q}$ is the identity if $p$ is even and zero if $p$ is odd. Hence $E_{2}^{0,q} = H^{q}_{\dR}(\widehat X_{K}/K)$, $E_{2}^{p,q}=0$ if $p<0$. It follows that the edge morphism $E_{2}^{0,q} = H^{q}_{\dR}(\widehat X_{K}/K) \to H^{q}(R\Gamma_{\dR}(\widehat X_{K} \times \widehat\Delta_{K}^{\dot}/K))$ is an isomorphism.

For $\Fil^nR\Gamma_\dR(X_K/K) \to \Fil^nR\Gamma_\dR(X_K\times \Delta^\dot_K/K)$ we argue similarly,
using in addition that $H^*_\dR(X_K/K) \to H^*_\dR(X_K\times_K \Delta^{-p}_K/K)$ is an isomorphism by the homotopy invariance of de Rham-cohomology, hence so is $F^nH^*_\dR(X_K/K) \xrightarrow{\cong} F^nH^*_\dR(X_K\times_K \Delta^{-p}_K/K)$ by strictness (cf.~\ref{Nr:AlgdeRham}).
\end{proof}
\begin{dfn}\label{nov1601}
Let $X = \Spec(A)$ be a smooth affine $R$-scheme of finite type. 
Let $\omega^{\rel}_{n,(r)}$ be the regulator datum $(\Smf_{R}, E_{\dot}\GL_{r} \to B_{\dot}\GL_{r}, \Fil^{n}R\Gamma_{\dR}((.)_{K}/K)\to R\Gamma_{\dR}((\widehat.)_{K}/K), \ch_{n,(r)}^{\rel})$.
By \ref{nov1602} this gives  homomorphisms
\[
\reg_{i}(\omega^{\rel}_{n,(r)})\colon \pi_i(F_{r}(A)) \to H^{2n-i}\left(R\Gamma_\rel(X\times \Delta^\dot,n)\right) \overset{\text{\ref{nov1204}}}{\cong}   H^{2n-i}_\rel(X,n),
\]
which are compatible for varying $r\geq 1$.
The \emph{relative Chern character} is defined to be the colimit
\[
 \ch_{n,i}^\rel\colon K_i^\rel(X) = \varinjlim_{r}\pi_i(F_{r}(A)) \xrightarrow{\varinjlim_{r}\reg_{i}(\omega^{\rel}_{n,(r)})} H^{2n-i}_\rel(X,n).
\]

\end{dfn}

We use Jouanolou's trick to extend this definition to all schemes in $\Sm_{R}$. According to Jouanolou and Thomason \cite[4.4]{WeibelKH} such a scheme $X$ admits a Jouanolou torsor $W\xrightarrow{p} X$, i.e. $W$ is affine and $p$ is a torsor for some vector bundle on $X$.
\begin{lemma}
In the above situation the map $p^*\colon R\Gamma_\rel(X,n) \to R\Gamma_\rel(W,n)$ is a quasi-isomorphism.
\end{lemma}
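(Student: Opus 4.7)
The plan is to reduce the statement to the quasi-isomorphism of the two components of the mapping fibre defining $R\Gamma_\rel$, and then apply homotopy invariance in each setting.

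First I would apply the five lemma. Pullback along $p$ gives a morphism of the long exact cohomology sequences
\[
\cdots \to H^i_\rel(X,n) \to \Fil^nH^i_\dR(X_K/K) \to H^i_\dR(\widehat X_K/K) \to \cdots
\]
and the analogous sequence for $W$ (cf.\ Remark \ref{nov0203}(ii)). Hence it suffices to prove that the two maps
\[
p_K^\ast\colon \Fil^nR\Gamma_\dR(X_K/K) \to \Fil^nR\Gamma_\dR(W_K/K),\qquad \widehat p_K^\ast\colon R\Gamma_\dR(\widehat X_K/K) \to R\Gamma_\dR(\widehat W_K/K)
\]
are quasi-isomorphisms.

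For the first map, $W_K \to X_K$ is a Zariski-locally trivial $\mathbf A^d_K$-bundle. Algebraic de Rham cohomology of smooth $K$-schemes is $\mathbf A^1$-homotopy invariant compatibly with the Hodge filtration (using the degeneration of the Hodge--de Rham spectral sequence in combination with the standard computation on $X_K\times \mathbf A^1_K$); combined with Zariski descent on $X_K$ this yields that $p_K^\ast$ is a quasi-isomorphism. For the second map, functoriality of the weak completion shows that $\widehat W \to \widehat X$ is Zariski-locally isomorphic to the projection $\widehat U \times \widehat{\mathbf A^d_R} \to \widehat U$ for affine opens $\widehat U \subset \widehat X$, so on generic fibres $\widehat W_K \to \widehat X_K$ is locally of the form $\widehat U_K \times \Sp(K\<x_1,\dots,x_d\>^\dag) \to \widehat U_K$, i.e.\ a trivial $d$-dimensional closed dagger polydisc bundle. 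Homotopy invariance of dagger de Rham cohomology \cite[Prop.~5.8]{GK} shows that pullback along such a projection is a quasi-isomorphism, and Zariski descent for dagger de Rham cohomology then globalizes this to the quasi-isomorphism $\widehat p_K^\ast$.

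The main obstacle is the descent step for the complex $R\Gamma_\dR(\widehat X_K/K)$, which is defined via a Godement resolution using prime filters rather than via a \v Cech procedure. To handle this I would choose a finite affine open cover $\{U_i\}$ of $X$ trivializing the torsor, form the associated \v Cech bicomplex both for $X$ and for $W$, apply the spectral sequence as in the proof of Lemma \ref{nov1206}, and invoke the local trivial-bundle case termwise; since the augmentation $R\Gamma_\dR(\widehat X_K/K) \to \mathrm{Tot}\,R\Gamma_\dR(\widehat U_{\bullet,K}/K)$ is a quasi-isomorphism (the Godement resolution consists of flabby sheaves, so its sections satisfy \v Cech descent), this finishes the argument.
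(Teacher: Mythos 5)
Your proposal is correct and follows essentially the same route as the paper's proof: reduce to the two components of the mapping fibre, trivialize the torsor on a finite affine cover, use \v{C}ech descent for the de Rham complexes of the associated (dagger) nerves, and conclude by homotopy invariance — citing the same \cite[Prop.~5.8]{GK} for the dagger component and the degeneration of the Hodge--de Rham spectral sequence for the filtered algebraic component. Your extra remark justifying descent for the Godement-resolution complexes (flabbiness, admissibility of the induced covering of $\widehat X_K$) is a point the paper passes over more quickly, but it is the same argument.
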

\begin{proof}
It is enough to show that $p$ induces a quasi-isomorphism on both components of the cone. We show that $R\Gamma_\dR(\widehat X_K/K) \xrightarrow{p^*} R\Gamma_\dR(\widehat W_K/K)$ is a quasi-isomorphism. Using moreover the degeneration of the Hodge--de Rham spectral sequence, the proof for $\Fil^nR\Gamma_\dR$ is similar.

Choose a finite open covering $X=\bigcup_{\alpha\in A} U_{\alpha}$ such that $p^{-1}(U_{\alpha}) \to U_{\alpha}$ is isomorphic to a trivial vector bundle $\mathbb A^{r}_{U_{\alpha}} \to U_{\alpha}$.
Let $U_{\dot} \to X$ be the \v{C}ech nerve of this covering and denote by $p^{-1}(U_{\dot}) \to W$ its base change to $W$. 
Since $\{(\widehat U_\alpha)_K\}_{\alpha\in A}$ is an admissible covering of $\widehat X_K$ it follows that $R\Gamma_\dR(\widehat X_K/K) \to R\Gamma_\dR((\widehat U_\dot)_K/K)$ and similarly $R\Gamma_\dR(\widehat W_K/K) \to R\Gamma_\dR(\widehat{p^{-1}(U_\dot)}_K/K)$ are quasi-isomorphisms. 
Hence we are reduced to the case that $W\to X$ is of the form $\mathbb A^r_X \to X$. Then the claim follows from homotopy invariance for the de Rham cohomology of dagger spaces \cite[Prop. 5.8]{GK}
\end{proof}
\begin{dfn}\label{nov0601}
Let $X$ be in $\Sm_{R}$ and choose a Jouanolou torsor $W \xrightarrow{p} X$ . We define the \emph{relative Chern character}
to be the composition
\[
 K_i^\rel(X) \xrightarrow{p^*} K_i^\rel(W) \xrightarrow{\ch_{n,i}^\rel} H^{2n-i}_{\rel}(W, n) \xrightarrow[\cong]{(p^*)^{-1}} H^{2n-i}_{\rel}(X, n).
\]
\end{dfn}
One checks that this does not depend on the choice of $W \to X$ using the fact that for two Jouanolou torsors $W\to X, W'\to X$, the fibre product $W\times_X W' \to X$ is again a Jouanolou torsor.

\section{Comparison with the rigid syntomic regulator}
\label{sec:Comparison}

The main technical problem in the construction of the rigid syntomic regulator is the construction of functorial complexes computing rigid and rigid syntomic cohomology. This was solved by Besser \cite{Besser}. An alternative construction of the regulator using cycle classes and higher Chow groups instead of K-theory is given in \cite{Mazzari}. We recall Besser's construction with some improvements from \cite{Mazzari}. As in \cite{HK} the systematic use of dagger spaces simplifies the construction a little bit.

Let $R$ be as before and assume moreover that the residue field $k$ of $R$ is perfect. Let $K_{0} \subseteq K$ be the field of fractions of the ring of Witt vectors of $k$.

\subsection{Rigid cohomology}
\label{sec:rigidcohom}

We consider the category $\Sch_{k}$ of separated schemes of finite type over $k$ which admit a closed immersion in a flat weakly formal $R$-scheme $\mathscr Y$ with smooth special fibre $\mathscr Y_k$. 
For $X\in\Sm_{R}$ the special fibre $X_{k}$ is in $\Sch_{k}$, as we can take the closed immersion of $X_k$ in the weak completion $\widehat X$ of $X$. 

Let $X$ be in $\Sch_{k}$ and choose an embedding $X \hookrightarrow \mathscr Y$ as above. The \emph{rigid cohomology} of $X$ with coefficients in $K$ is by definition the de Rham cohomology of the tube $]X[_{\mathscr Y} := \spec^{-1}(X) \subset \mathscr Y_{K}$ (cf. \ref{Nr:DaggerSpaces}) of $X$ in $\mathscr Y$:
\[
H^{*}_{\rig}(X/K) = H^{*}_{\dR}(]X[_{\mathscr Y}/K)
\]
\cite{Berthelot}, \cite[Prop. 8.1]{GK}. Up to isomorphism this is independent of the choice of $\mathscr Y$.

Following Besser we define
\[
R\Gamma_{\rig}(X/K)_{\mathscr Y} := R\Gamma_{\dR}(]X[_{\mathscr Y}/K).
\]
This complex is functorial only in the \emph{pair} $(X,\mathscr Y)$. To obtain complexes functorial in the $k$-scheme $X$ we proceed as in \cite[\S 4]{Besser}.
Define the category of \emph{rigid pairs} $\mathcal{RP}$: Objects are pairs $(X, j\colon X \hookrightarrow \mathscr Y)$ where $X$ and $j$ are as above. We will often abbreviate such a pair as $(X, \mathscr Y)$. Morphisms $(X', \mathscr Y') \to (X, \mathscr Y)$ are pairs  of morphisms $(f\colon X'\to X, F\colon ]X'[_{\mathscr Y'} \to ]X[_{\mathscr Y})$ such that the diagram
\[\xymatrix{
]X'[_{\mathscr Y'} \ar[d]_{\spec} \ar[r]^F & ]X[_{\mathscr Y} \ar[d]^{\spec}\\
X' \ar[r]^f & X
}
\]
commutes.
The category $\mathcal{RP}$ substitutes Besser's category $\mathcal{RT}$ of rigid triples. Note that there is a natural functor $\Sm_{R}\to \mathcal{RP}$ taking $X$ to the pair $(X_{k}, \widehat X)$. 

With this replacement Besser's construction goes through word by word and yields the following
\begin{prop}[{\cite[Prop. 4.9, Cor. 4.22]{Besser}}]\label{prop:BesserRigComplexes} 
\begin{enumerate}
\item There exists a functor $R\Gamma_{\rig}(\,.\,,/k)\colon \Sch_{k}^{\op} \to \Ch_{K}$ to the category of complexes of $K$-vector spaces
such that $H^{*}(R\Gamma_{\rig}(X/K)) = H^{*}_{\rig}(X/K)$ functorially. If $K$ is absolutely unramified, i.e. $K = K_{0}$, and $\sigma$ is the Frobenius on $K_{0}$, there exists a natural $\sigma$-semilinear Frobenius-endomorphism $\phi$ on $R\Gamma_{\rig}(X/K_{0})$.
\item There exists a functor $\mathcal{RP}^{\op} \to \Ch_{K}$, $(X, \mathscr Y) \mapsto \widetilde{R\Gamma}_{\rig}(X/K)_{\mathscr Y}$ together with $\mathcal{RP}$-functorial quasi-isomorphisms
\[
R\Gamma_{\dR}(]X[_{\mathscr Y}/K) = R\Gamma_{\rig}(X/K)_{\mathscr Y} \xleftarrow{\qis} \widetilde{R\Gamma}_{\rig}(X/K)_{\mathscr Y} \xrightarrow{\qis} R\Gamma_{\rig}(X/K). 
\]
\end{enumerate}
\end{prop}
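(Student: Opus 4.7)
The plan is to transcribe Besser's construction from \cite[\S 4]{Besser} almost word for word, with the systematic modification of replacing his category $\mathcal{RT}$ of rigid triples $(U, X, \mathscr Y)$ by our category $\mathcal{RP}$ of rigid pairs. The crucial observation that makes this substitution work is that in Besser's setup the compactification $X \hookrightarrow \overline{X}$ is used only to enforce overconvergence in the rigid-analytic complexes; since overconvergence is built into the structure sheaf of a dagger space, this role is now played by the dagger tube $]X[_{\mathscr Y} \subset \mathscr Y_{K}$, and no compactification is needed.

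For part (ii), I first define a functor on $\mathcal{RP}^{\op}$ by
\[
R\Gamma_{\rig}(X/K)_{\mathscr Y} := \Gamma(]X[_{\mathscr Y}, \Gd_{Pt(]X[_{\mathscr Y})}\Omega^{*}_{]X[_{\mathscr Y}/K}) = R\Gamma_{\dR}(]X[_{\mathscr Y}/K),
\]
using the Godement resolution for dagger spaces from \ref{Nr:Godement}. Functoriality along a morphism of rigid pairs is inherited from the general functoriality of Godement resolutions under morphisms of sites, recalled at the end of \ref{Nr:Godement}. To produce a replacement $\widetilde{R\Gamma}_{\rig}(X/K)_{\mathscr Y}$ that is functorial in the $k$-scheme $X$ alone and still receives a quasi-isomorphism to $R\Gamma_{\dR}(]X[_{\mathscr Y}/K)$, I follow Besser and introduce the category of chains $(X \hookrightarrow \mathscr Y_{0}) \leftarrow \dots \leftarrow (X \hookrightarrow \mathscr Y_{n})$ of morphisms of rigid pairs over $X$, take a homotopy limit of the rigid complexes along such chains starting with $(X, \mathscr Y)$, and verify by means of Besser's combinatorial lemma that the evaluation map to $R\Gamma_{\dR}(]X[_{\mathscr Y}/K)$ is a quasi-isomorphism. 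The further homotopy limit over all rigid pairs extending $X$ is then $R\Gamma_{\rig}(X/K)$, functorial on $\Sch_{k}^{\op}$; this delivers part (i) except for Frobenius.

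For the Frobenius in the unramified case $K = K_{0}$, I use that Frobenius on $k$ lifts Zariski-locally to every weak formal scheme $\mathscr Y$, inducing a $\sigma$-semilinear endomorphism on the tube $]X[_{\mathscr Y}$ that depends on the choice of lift. By enhancing the chain categories of the previous step to carry a choice of local Frobenius lift at each stage (as in \cite[\S 7]{Besser}), these local lifts assemble into a canonical $\sigma$-semilinear endomorphism $\phi$ on $R\Gamma_{\rig}(X/K_{0})$ that is independent of all choices up to canonical quasi-isomorphism.

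The main obstacle is not conceptual but combinatorial: one must verify that Besser's index categories of chains remain sufficiently contractible when rigid triples are replaced by rigid pairs, so that the homotopy limits really compute rigid cohomology. This is in fact automatic, since the compactification in a rigid triple plays only an overconvergence role and no combinatorial one. Moreover, the corresponding diagrammatic arguments have already been carried out for dagger spaces in \cite[\S 3--4]{Mazzari}. Combining Besser's formal construction with the dagger-theoretic input from \cite{Mazzari} yields both statements of the proposition.
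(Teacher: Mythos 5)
Your proposal is correct and follows essentially the same route as the paper, which itself only remarks that Besser's construction from \cite[\S 4]{Besser} goes through word for word once the category $\mathcal{RT}$ of rigid triples is replaced by $\mathcal{RP}$, the compactification's overconvergence role being absorbed by the dagger structure of the tube $]X[_{\mathscr Y}$. Your more detailed account of the chain/homotopy-limit construction, the Frobenius lifts, and the appeal to \cite{Mazzari} for the dagger-space Godement formalism is consistent with what the paper implicitly relies on.
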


\subsection{Rigid syntomic cohomology}

\begin{Nr}\label{Nr:QuasiPullback}
Recall that the \emph{homotopy pullback} of a diagram of complexes $A \xrightarrow{f} C \xleftarrow{g} B$ is by definition the complex $A\tilde\times_{C}B := \Cone(A\oplus B \xrightarrow{f-g} C)[-1]$. It fits in a diagram 
\[
\xymatrix{
A \tilde\times_{C} B \ar[r]^-{\tilde f} \ar[d]_{\tilde g} & B \ar[d]^{g}\\
A \ar[r]^{f} & C
}
\]
which is commutative up to \emph{canonical} homotopy, given by the projection to the $C$-component of the cone. If $f$ is a quasi-isomorphism, so is $\tilde f$.
\end{Nr}

\begin{Nr}\label{Nr:TildeComplexes}
Let $X$ be in $\Sm_{R}$. Then $]X_{k}[_{\widehat X} = \widehat X_{K} \subseteq X_{K}^{\dag}$ and we have natural maps of complexes, functorial in $X$,
\[
\Fil^{n}R\Gamma_{\dR}(X_{K}/K) \xrightarrow{\eqref{eq:CompDeRhamRigid}} R\Gamma_{\dR}(]X_{k}[_{\widehat X}/K) \xleftarrow{\qis} \widetilde{R\Gamma}_{\rig}(X_{k}/K)_{\widehat X} \xrightarrow{\qis} R\Gamma_{\rig}(X_{k}/K).
\] 
Define $\Fil^{n}\widetilde{R\Gamma}_{\dR}(X/K)$ to be the homotopy pullback of the left two arrows above, a complex quasi-isomorphic to $\Fil^{n}R\Gamma_{\dR}(X_{K}/K)$ which admits a natural map
\begin{equation}\label{jul211}
\Fil^{n}\widetilde{R\Gamma}_{\dR}(X/K) \to \widetilde{R\Gamma}_{\rig}(X_{k}/K)_{\widehat X}. 
\end{equation}
On $R\Gamma_{\rig}(X_{k}/K_{0})$ we have the Frobenius $\phi$ and the natural map to $R\Gamma_{\rig}(X_{k}/K)$. We define the complex
\[
\Phi(n)(X_{k}) :=\Cone\left(R\Gamma_{\rig}(X_{k}/K_{0}) \to R\Gamma_{\rig}(X_{k}/K_{0}) \oplus R\Gamma_{\rig}(X_{k}/K)  \right)
\]
where the map is given by $\omega\mapsto \left((1-\frac{\phi}{p^{n}})\omega,\omega\right)$.
This complex is functorial in the $k$-scheme $X_{k}$ and there are natural maps 
\begin{equation}\label{jul212}
\widetilde{R\Gamma}_{\rig}(X_{k}/K)_{\widehat X} \xrightarrow{\simeq} R\Gamma_{\rig}(X_{k}/K)  \to \Phi(n)(X_{k}).
\end{equation}
Consider the composition
\begin{equation}\label{eq:MapFilPhi}
\Fil^{n}\widetilde{R\Gamma}_{\dR}(X/K) \xrightarrow{\eqref{jul211}} \widetilde{R\Gamma}_{\rig}(X_{k}/K)_{\widehat X} \xrightarrow{\eqref{jul212}} \Phi(n)(X_{k}).
\end{equation}
%
\end{Nr}
\begin{dfn}
We define the \emph{syntomic complex of $X$ twisted by $n$}
\[
R\Gamma_{\syn}(X,n):=\MF(\Fil^{n}\widetilde{R\Gamma}_{\dR}(X/K) \xrightarrow{-\eqref{eq:MapFilPhi}} \Phi(n)(X_{k})).
\]
Its cohomology groups will be denoted by $H^{*}_{\syn}(X,n)$. 
\end{dfn}
\begin{rem}
Writing down the iterated cone construction explicitly one sees that $R\Gamma_{\syn}(X,n)$ is isomorphic to the complex
\[
\Cone\left(R\Gamma_{\rig}(X_{k}/K_{0}) \oplus \Fil^{n}\widetilde{R\Gamma}_{\dR}(X/K) \to R\Gamma_{\rig}(X_{k}/K_{0}) \oplus R\Gamma_{\rig}(X_{k}/K)\right)[-1]
\]
where the map is given by $(x,y) \mapsto ((1-\frac{\phi}{p^{n}})x, x-y)$. From this one sees that the fundamental Proposition 6.3 of \cite{Besser} also holds for our definition of rigid syntomic cohomology. Hence all further constructions of \cite{Besser} work equally well in our setting. In particular, there are natural maps
\begin{equation}\label{eq:MapSynDeRham}
H^{*}_{\syn}(X,n) \to \Fil^{n}H^{*}_{\dR}(X_{K}/K).
\end{equation}

In fact, it is possible to construct a natural chain of quasi-isomorphisms connecting our version of the rigid syntomic complex with Besser's. But since we do not need this we omit the lengthy and technical details.
\end{rem}

We now give Besser's construction of the rigid syntomic regulator \cite[Thm. 7.5]{Besser} in the setup of the present paper. By \cite[Prop. 7.4]{Besser} and the discussion following it, for every positive integers $n, r$ there exists a class 
\begin{equation}\label{nov1501}
\ch_{n,(r)}^{\syn} \in H^{2n}_{\syn}(B_{\dot}\GL_{r,R}, n),
\end{equation} 
the universal $n$-th syntomic Chern character class, uniquely determined by the fact that it is mapped to the degree $2n$ component $\ch_{n,(r)}^\dR$ of the universal  de Rham Chern character class in $\Fil^{n}H^{2n}_{\dR}(B_{\dot}\GL_{r,K}/K)$ under \eqref{eq:MapSynDeRham}. These are compatible for varying $r$.

Let 
$a=(\,.\,)_{k}\colon \Sm_{R} \to \Sch_{k}$ be the special fibre functor.
We define the syntomic regulator datum $\omega^{\syn}_{n,(r)} := (\Sch_{k}, B_{\dot}\GL_{r,R} \xrightarrow{\id} B_{\dot}\GL_{r,R}, \Fil^{n}\widetilde{R\Gamma}_{\dR}(\,.\,/K) \xrightarrow{-\eqref{eq:MapFilPhi}} \Phi(n)((\,.\,)_{k}), \ch_{n,(r)}^{\syn})$.

Now let $X=\Spec(A)$ be an affine scheme in $\Sm_{R}$ and as always $X^{\dot} = X \times_{R} \Delta_{R}^{\dot}$.
With these choices we have
\begin{align*}
 & \left(B_\dot\GL_{r,R}\times_{a(B_{\dot}\GL_{r,R})} a(B_\dot\GL_{r,R})\right)(X^\dot) = B_\dot\GL_{r,R}(A[\Delta^\dot]), \\
& \MF\left(\Fil^{n}\widetilde{R\Gamma}_{\dR}(X^\dot) \to  \Phi(n)(X_{k}^\dot)\right) = R\Gamma_\syn(X\times_{R}\Delta_{R}^\dot, n), \\ 
& \MF\left(\Fil^{n}\widetilde{R\Gamma}_{\dR}(B_\dot\GL_{r,R}) \to \Phi(n)(B_\dot\GL_{r,k})\right) = R\Gamma_\syn(B_\dot\GL_{r,R},n).
\end{align*}
Similarly as in Lemma \ref{nov1206} one shows that $R\Gamma_\syn(X,n) \to R\Gamma_\syn(X\times_{R}\Delta_{R}^\dot, n)$ is a quasi-isomorphism.
\begin{dfn}
For $i\geq 1$ the \emph{syntomic Chern character} or \emph{regulator} is given by 
 \begin{multline*}
 \ch_{n,i}^{\syn}\colon K_i(X) = \varinjlim_{r} \pi_{i}\left(B_{\dot}GL_{r}(A[\Delta^{\dot}])\right) \xrightarrow{\varinjlim_{r}\reg_{i}(\omega^{\syn}_{n,(r)})} \\
 H^{2n-i}(R\Gamma_\syn(X\times_{R}\Delta_{R}^\dot, n)) \cong  H^{2n-i}_\syn(X,n).
 \end{multline*}
\end{dfn}
Using that the natural map $B_\dot\GL(A) \to \diag B_\dot\GL(A[\Delta^\dot])$ induces an isomorphism in homology with $\Z$-coefficients (cf.~the proof of Lemma \ref{mar2003}), it is easy to check that this construction is equivalent to Besser's in the affine case.

Again, this is extended to all schemes in $\Sm_{R}$ using Jouanolou's trick (cf.~\ref{nov0601}).

\subsection{The comparison}

\begin{lemma}\label{nov0202}
There exist complexes $\widetilde{R\Gamma}_\rel(X,n)$, functorial in $X\in \Sm_{R}$, together with maps
\[
 R\Gamma_\rel(X,n) \xleftarrow{\simeq} \widetilde{R\Gamma}_\rel(X,n) \to R\Gamma_\syn(X,n),
\]
the left pointing arrow being a quasi-isomorphism. These induce natural maps
\begin{equation}\label{mar0701}
H^{*}_{\rel}(X,n) \to H^{*}_{\syn}(X, n)
\end{equation}
which are isomorphisms if $X$ is proper and $*\not\in \{2n, 2n+1, 2n+2\}$.
\end{lemma}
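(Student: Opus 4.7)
The plan is to set
\[
 \widetilde{R\Gamma}_\rel(X,n) := \MF\bigl(\Fil^n \widetilde{R\Gamma}_\dR(X/K) \xrightarrow{(\ref{eq:MapFilPhi}.1)} \widetilde{R\Gamma}_\rig(X_k/K)_{\widehat X}\bigr),
\]
which is functorial in $X \in \Sm_R$ since both factors are. To produce the quasi-isomorphism with $R\Gamma_\rel(X,n)$, I apply $\MF$ to the square whose verticals are the structural quasi-isomorphisms $\Fil^n\widetilde{R\Gamma}_\dR(X/K) \xrightarrow{\simeq} \Fil^n R\Gamma_\dR(X_K/K)$ (the second leg of the homotopy pullback from \ref{Nr:TildeComplexes}) and $\widetilde{R\Gamma}_\rig(X_k/K)_{\widehat X} \xrightarrow{\simeq} R\Gamma_\dR(\widehat X_K/K)$ (Proposition \ref{prop:BesserRigComplexes} together with $]X_k[_{\widehat X} = \widehat X_K$ for smooth $X/R$). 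The square commutes up to the canonical homotopy supplied by the homotopy-pullback construction (\ref{Nr:QuasiPullback}), so $\MF$ produces a map $\widetilde{R\Gamma}_\rel(X,n) \to R\Gamma_\rel(X,n)$, and the five-lemma on the associated long exact sequences of mapping fibres shows it is a quasi-isomorphism.

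The map to $R\Gamma_\syn(X,n)$ comes from the observation that the composite $\widetilde{R\Gamma}_\rig(X_k/K)_{\widehat X} \to R\Gamma_\rig(X_k/K) \to \Phi(n)(X_k)$ is precisely the postcomposition of $(\ref{eq:MapFilPhi}.1)$ with the rest of the map used in defining $R\Gamma_\syn$. Hence the square with identity on $\Fil^n\widetilde{R\Gamma}_\dR(X/K)$ commutes on the nose and induces the desired map on mapping fibres.

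For the isomorphism range, I apply a $3\times 3$-type argument to the morphism of distinguished triangles $(\widetilde{R\Gamma}_\rel \to \Fil^n\widetilde{R\Gamma}_\dR \to \widetilde{R\Gamma}_\rig) \to (R\Gamma_\syn \to \Fil^n\widetilde{R\Gamma}_\dR \to \Phi(n))$ with identity on the middle term; this identifies $\Cone(\widetilde{R\Gamma}_\rel \to R\Gamma_\syn)$ with $\Cone(\widetilde{R\Gamma}_\rig \to \Phi(n))[-1]$. Quotienting $R\Gamma_\rig(X_k/K)$ out of the iterated cone defining $\Phi(n)$ further identifies the latter with $\Cone(R\Gamma_\rig(X_k/K_0) \xrightarrow{1-\phi/p^n} R\Gamma_\rig(X_k/K_0))[-1]$, whose cohomology in each degree is assembled from $\ker$ and $\coker$ of $1-\phi/p^n$ on $H^*_\rig(X_k/K_0)$. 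For $X/R$ smooth and proper (hence $X_k/k$ smooth proper) and $k$ finite, Katz--Messing-type weight bounds imply that $1-\phi/p^n$ is bijective on $H^j_\rig(X_k/K_0)$ for $j\neq 2n$. Accounting for the $[-1]$ shift, vanishing of $H^{i-1}$ and $H^i$ of the relative cone reduces to bijectivity on $H^{i-2}$, $H^{i-1}$, $H^i$ of rigid cohomology, which holds precisely when $i \notin \{2n, 2n+1, 2n+2\}$; the long exact sequence for $\widetilde{R\Gamma}_\rel \to R\Gamma_\syn$ then yields the claimed isomorphism. The delicate point is keeping track of this single-degree shift, which is exactly what produces the three-term asymmetric exclusion range.
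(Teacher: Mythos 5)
Your construction is exactly the paper's: the same mapping fibre $\MF\bigl(\Fil^n\widetilde{R\Gamma}_\dR(X/K) \to \widetilde{R\Gamma}_\rig(X_k/K)_{\widehat X}\bigr)$, the same homotopy-commutative left square (with the canonical homotopy from the homotopy pullback) giving the quasi-isomorphism to $R\Gamma_\rel(X,n)$, and the same strictly commutative right square giving the map to $R\Gamma_\syn(X,n)$. For the isomorphism range the paper simply defers to the weight considerations of Besser's Prop.~8.6, and your explicit reduction --- identifying the cone with $\MF\bigl(R\Gamma_\rig(X_k/K_0)\xrightarrow{1-\phi/p^n}R\Gamma_\rig(X_k/K_0)\bigr)$ and requiring bijectivity of $1-\phi/p^n$ on $H^j_\rig$ for $j\in\{i-2,i-1,i\}$, which by purity fails only for $j=2n$ --- is a correct unwinding of that citation with the right degree bookkeeping (the only caveat being that you quietly assume $k$ finite, whereas the lemma is stated for perfect $k$; the purity input there is obtained by reduction to the finite case).
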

\begin{proof}
Consider the following diagram of complexes
\begin{equation}\label{eq:MapRelSyn}
\begin{split}
\xymatrix@C+0.3cm@R-0.3cm{
\Fil^{n}R\Gamma_{\dR}(X_{K}/K)  \ar[d]^{\eqref{eq:CompDeRhamRigid}} & \Fil^{n}\widetilde{R\Gamma}_{\dR}(X/K) \ar[d]^{\eqref{jul211}} \ar[l]_-\simeq \ar@{=}[r] & \Fil^{n}\widetilde{R\Gamma}_{\dR}(X/K) \ar[d]^{-\eqref{eq:MapFilPhi}}\\
R\Gamma_{\dR}(\widehat X_{K}/K) & \widetilde{R\Gamma}_{\rig}(X_{k}/K)_{\widehat X} \ar[l]_-{\simeq} \ar[r]^{-\eqref{jul212}} & \Phi(n)(X_{k})
}
\end{split}
\end{equation}
The left square commutes up to canonical homotopy (cf. \ref{Nr:QuasiPullback}), the right square strictly commutes. 
We set $\widetilde{R\Gamma}_\rel(X,n) = \MF(\Fil^{n}\widetilde{R\Gamma}_{\dR}(X/K) \to \widetilde{R\Gamma}_{\rig}(X_{k}/K)_{\widehat X})$.
The desired maps are induced by the maps in the diagram together with the homotopy which makes the left hand square commute (cf. \ref{dez1201}).

The second statement follows from weight considerations as in the proof of \cite[Prop. 8.6]{Besser}.
\end{proof}

We can now formulate the main result of this paper:
\begin{thm}\label{thm:Comparison}
For every $X$ in $\Sm_{R}$ and $i \geq 1$ the diagram
\[
\xymatrix@C+0.5cm{
K_{i}^{\rel}(X) \ar[r] \ar[d]_{\ch_{n,i}^{\rel}} & K_{i}(X) \ar[d]^{\ch_{n,i}^{\syn}} \\
H^{2n-i}_{\rel}(X,n) \ar[r]^-{\eqref{mar0701}} & H^{2n-i}_{\syn}(X,n)
}
\]
commutes.
\end{thm}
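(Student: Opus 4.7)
The strategy is to factor both legs of the diagram through a common intermediate regulator datum (Lemma \ref{nov1602}) built from the middle column of \eqref{eq:MapRelSyn}, then apply the naturality Lemmas \ref{nov1301} and \ref{nov1302} three times. Jouanolou's trick (Definition \ref{nov0601}) reduces the claim to affine $X=\Spec(A)$.

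\textbf{Intermediate data.} Let $a\colon\Sm_R\to\mathcal{RP}$ denote $X\mapsto(X_k,\widehat X)$, and consider the intermediate regulator datum
\[
\tilde\omega_{n,(r)} := (\mathcal{RP},\; E_\dot\GL_r\to B_\dot\GL_r,\; \Fil^n\widetilde{R\Gamma}_\dR\to\widetilde{R\Gamma}_\rig\circ a,\; \tilde\ch_{n,(r)}),
\]
where $\tilde\ch_{n,(r)}$ is the unique class in $H^{2n}$ of the corresponding mapping fibre lifting the universal de Rham Chern character class $\ch^\dR_{n,(r)}\in\Fil^nH^{2n}_\dR(B_\dot\GL_{r,K}/K)$. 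Existence and uniqueness are given by the verbatim analog of Lemma \ref{nov1204}, since the tube $]E_\dot\GL_{r,k}[_{E_\dot\widehat\GL_r}=E_\dot\widehat\GL_{r,K}$ is a contractible simplicial dagger space, so $\widetilde{R\Gamma}_\rig(E_\dot\GL_{r,k})_{E_\dot\widehat\GL_r}$ has no positive-degree cohomology. Composing further with the natural transformation $\widetilde{R\Gamma}_\rig\to\Phi(n)$ coming from the strictly commuting right square of \eqref{eq:MapRelSyn}, one obtains a second regulator datum $\tilde\omega'_{n,(r)}$ over the category $\Sch_k$ with functor $(\cdot)_k\colon\Sm_R\to\Sch_k$, same simplicial input $E_\dot\GL_r\to B_\dot\GL_r$, and universal class $\tilde\ch'_{n,(r)}$ the image of $\tilde\ch_{n,(r)}$. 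By the same contractibility argument, $\tilde\ch'_{n,(r)}$ is again the unique lift of $\ch^\dR_{n,(r)}$ in its ambient mapping fibre.

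\textbf{Chaining the comparisons.} Lemma \ref{nov1301} applied to the forgetful functor $(Y,\mathscr Y)\mapsto\mathscr Y\colon\mathcal{RP}\to\Smf_R$ and the quasi-isomorphisms of the left square of \eqref{eq:MapRelSyn} (the required homotopy is the one built into the homotopy pullback definition of $\Fil^n\widetilde{R\Gamma}_\dR$) identifies $\reg_i(\tilde\omega_{n,(r)})$ with $\ch^\rel_{n,i}$ under the quasi-isomorphism $R\Gamma_\rel(X,n)\xleftarrow{\simeq}\widetilde{R\Gamma}_\rel(X,n)$. A second application of Lemma \ref{nov1301}, to the forgetful functor $(Y,\mathscr Y)\mapsto Y\colon\mathcal{RP}\to\Sch_k$ and the strictly commuting right square of \eqref{eq:MapRelSyn}, then shows that $\reg_i(\tilde\omega'_{n,(r)})$ equals the composition of $\ch^\rel_{n,i}$ with the natural map \eqref{mar0701}. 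Finally, Lemma \ref{nov1302} applied to the commutative square
\[
\xymatrix@R-0.3cm@C-0.3cm{
E_\dot\GL_r \ar[r] \ar[d] & B_\dot\GL_r \ar[d]^{\id} \\
B_\dot\GL_r \ar[r]_-{\id} & B_\dot\GL_r
}
\]
in $\Sm_R$ compares $\tilde\omega'_{n,(r)}$ with $\omega^\syn_{n,(r)}$: the induced map on function spaces is exactly the projection $F_r(A)\to B_\dot\GL_r(A[\Delta^\dot])$ that induces $K^\rel_i(X)\to K_i(X)$, and the hypothesis on universal Chern character classes holds because $\tilde\ch'_{n,(r)}$ and $\ch^\syn_{n,(r)}$ are both characterized as the unique lifts of $\ch^\dR_{n,(r)}$. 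The three naturality squares combine to give the theorem. The chief technical point throughout is the acyclicity in positive degrees of the contractible simplicial objects $E_\dot\widehat\GL_{r,K}$ and $E_\dot\GL_{r,k}$ in the cohomology theories at hand, which guarantees that each intermediate Chern character class is uniquely determined by $\ch^\dR_{n,(r)}$.
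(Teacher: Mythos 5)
Your proof is correct and follows essentially the same route as the paper: the paper likewise decomposes the square into three sub-diagrams via intermediate regulator data built from the two squares of \eqref{eq:MapRelSyn}, verifies the hypotheses of Lemmas \ref{nov1301} and \ref{nov1302} by the uniqueness of lifts of $\ch^{\dR}_{n,(r)}$ coming from the contractibility of $E_{\dot}\GL_{r}$ (as in Lemma \ref{nov1204}), and notes that the map $K_{i}^{\rel}(X)\to K_{i}(X)$ factors through the intermediate $K_{i}(X,X_{k})$. The only (immaterial) differences are that you phrase the intermediate datum over $\mathcal{RP}$ rather than $\Smf_{R}$ and permute the order of the three comparisons.
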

\begin{proof}
By construction of the maps in the diagram, we may suppose that $X = \Spec(A)$ is affine. Write $A_{k} = A\otimes_{R} k$, so that $X_{k} = \Spec(A_{k})$. 

We split the diagram up into the following smaller diagrams, and show that every single one of them commutes.
\begin{equation}\begin{split}\label{nov1502}
 \xymatrix@C+0.3cm{
K_i^\rel(X)\ar[d]_{\ch_{n,i}^{\rel}} \ar[dr]^{\widetilde\ch_{n,i}^\rel} \ar[r] & K_i(X,X_k) \ar[r] \ar[dr]^{\widetilde{\ch}_{n,i}^\syn} & K_i(X) \ar[d]^{\ch_{n,i}^\syn} \\
H^{2n-i}_\rel(X,n) & H^{2n-i}(\widetilde{R\Gamma}_\rel(X,n)) \ar[l]^-{\text{cf. \ref{nov0202}}}_-{\cong} \ar[r]_-{\text{cf. \ref{nov0202}}}& H^{2n-i}_\syn(X,n)
}
\end{split}
\end{equation}
Here $K_{i}(X, X_{k}) := \pi_{i}\left(B_{\dot}\GL(A[\Delta^{\dot}]) \times_{B_{\dot}\GL(A_{k}[\Delta^{\dot}])} E_{\dot}\GL(A_{k}[\Delta^{\dot}])\right)$. Since $A_{k}[\Delta^{\dot}] \cong A^{\dag}\<\Delta^{\dot}\>^\dag \otimes_{R} k$ the map $K_{i}^{\rel}(X) \to K_{i}(X)$ factors through $K_{i}(X,X_{k})$. 

All vertical resp. diagonal maps are induced by a compatible family of maps for each finite level $r$. We can thus restrict to a fixed finite level $r$.
To ease notation we write $E_{\dot} := E_{\dot}\GL_{r,R}, B_{\dot}:=B_{\dot}\GL_{r,R}$.
The diagonal maps arise as follows: 
Since the left hand square in \eqref{eq:MapRelSyn} commutes up to canonical homotopy we are in the situation of Lemma \ref{nov1301}. In particular, we have a natural isomorphism (cf. \eqref{eq:MapRelSyn})
\begin{multline*}
H^{2n}(\MF(\Fil^{n}\widetilde{R\Gamma}_{\dR}(B_{\dot}/K) \to \widetilde{R\Gamma}_{\rig}(E_{\dot,k}/K)_{\widehat E_{\dot}}))
\xrightarrow{\cong} \\
H^{2n}\left(\MF(\Fil^{n}R\Gamma_{\dR}(B_{\dot}\GL_{r,K}/K) \to R\Gamma_{\dR}(E_{\dot}\widehat\GL_{r,K}/K))\right)
\end{multline*}
and we define $\widetilde\ch^{\rel}_{n,(r)}$ to be the class mapping to $\ch^{\rel}_{n,(r)}$ (cf. \eqref{nov1401}) under this isomorphism.
We let
$\widetilde\ch_{n,i}^{\rel}$ be induced by the regulator data $\widetilde\omega^{\rel}_{n,(r)} = (\Smf_{R}, E_{\dot} \to B_{\dot}, \Fil^n\widetilde{R\Gamma}_\dR(./K) \to \widetilde{R\Gamma}_\rig((.)_{k}/K)_{\widehat{(.)}}, \widetilde\ch^{\rel}_{n,(r)})$.
 Then it is clear from Lemma \ref{nov1301} and the constructions that the left triangle commutes.

The map $\widetilde\ch^\syn_{n,i}$ is induced by the data $\widetilde\omega^\syn_{n,(r)} = (\Sch_k, E_\dot \to B_\dot, \Fil^{n}\widetilde{R\Gamma}_{\dR}(./K) \xrightarrow{-\eqref{eq:MapFilPhi}} \Phi(n)((.)_{k}), \widetilde\ch_{n,(r)}^\syn)$ where $\widetilde\ch_{n,(r)}^{\syn}$ is defined as follows: We have a commutative diagram
\[
\xymatrix@R-0.3cm@C-0.3cm{
E_{\dot} \ar[r] \ar[d] & B_{\dot}\ar[d]^{\id}\\
B_{\dot} \ar[r]^-{\id} & B_{\dot}
}
\]
which induces a map 
\begin{multline*}
H^{2n}_{\syn}(B_{\dot}, n) \cong H^{2n}\left(\MF\left(\Fil^{n}\widetilde{R\Gamma}_{\dR}(B_\dot/K) \to \Phi(n)(B_{\dot,k})\right)\right) \to \\
H^{2n}\left(\MF\left(\Fil^{n}\widetilde{R\Gamma}_{\dR}(B_\dot/K) \to \Phi(n)(E_{\dot,k})\right)\right)
\end{multline*}
and $\widetilde\ch_{n,(r)}^{\syn}$ is by definition the image of $\ch_{n,(r)}^{\syn}$ (see \eqref{nov1501}) by this map. It is then clear from Lemma \ref{nov1302} that the right triangle in \eqref{nov1502} commutes. 

It remains to show that the middle parallelogram in \eqref{nov1502} commutes. For this we apply Lemma \ref{nov1301} to the regulator data $\widetilde\omega^{\rel}_{n}$ and $\widetilde\omega^{\syn}_{n}$: The special fibre functor $\Sm_{R} \to \Sch_{k}$ factors naturally as $\Sm_{R}\xrightarrow{X\mapsto \widehat X} \Smf_{R} \xrightarrow{\mathscr X\mapsto \mathscr X_{k}} \Sch_{k}$. Moreover, we have a natural transformation between functors $\Smf_{R}^{\op}\to \Ch, \widetilde{R\Gamma}_{\rig}((.)_{k}/K)_{(.)} \to \Phi(n)((.)_{k})$ given by $-\eqref{eq:MapFilPhi}$. Since
\[
\xymatrix@C+0.1cm@R-0.5cm{
\Fil^{n}\widetilde{R\Gamma}_{\dR}(X/K) \ar[d]^{\eqref{jul211}}  \ar@{=}[r] & \Fil^{n}\widetilde{R\Gamma}_{\dR}(X/K) \ar[d]^{-\eqref{eq:MapFilPhi}}\\
\widetilde{R\Gamma}_{\rig}(X_{k}/K)_{\widehat X} \ar[r]^{-\eqref{jul212}} & \Phi(n)(X_{k})
}
\]
commutes for every $X\in \Sm_{R}$,
we have a natural map
\begin{multline}\label{nov1503}
H^{2n}\left(\MF(\Fil^{n}\widetilde{R\Gamma}_{\dR}(B_{\dot}/K) \to \widetilde{R\Gamma}_{\rig}(E_{\dot,k}/K)_{\widehat E_{\dot}})\right) \to \\
H^{2n}\left(\MF(\Fil^{n}\widetilde{R\Gamma}_{\dR}(B_\dot/K) \to \Phi(n)(E_{\dot,k}))\right).
\end{multline}
If we show that under this map $\widetilde\ch_{n,(r)}^{\rel}$ maps to $\widetilde\ch_{n,(r)}^{\syn}$ then Lemma \ref{nov1301} implies the desired commutativity.
But indeed, \eqref{nov1503} fits in a commutative diagram
\[
\xymatrix@C+0.2cm@R-0.4cm{
H^{2n}\left(\MF(\Fil^{n}\widetilde{R\Gamma}_{\dR}(B_{\dot}/K) \to \widetilde{R\Gamma}_{\rig}(E_{\dot,k}/K)_{\widehat E_{\dot}})\right) \ar[r]^-{\cong} \ar[d]_{\eqref{nov1503}} & H^{2n}( \Fil^{n}\widetilde{R\Gamma}_{\dR}(B_{\dot}/K))\\
H^{2n}\left(\MF(\Fil^{n}\widetilde{R\Gamma}_{\dR}(B_\dot/K) \to \Phi(n)(E_{\dot,k}))\right) \ar[ur]_-{\cong} & \Fil^{n}H^{2n}_{\dR}(B_{\dot}\GL_{r,K}/K) \ar@{=}[u]
}
\]
where the two isomorphisms are established as in Lemma \ref{nov1204} and, by the constructions, both, $\widetilde\ch_{n,(r)}^{\rel}$ and $\widetilde\ch_{n,(r)}^{\syn}$, map to $\ch_{n,(r)}^{\dR}$ on the right hand side.
\end{proof}

\subsection{Applications}\label{ssec:applications}
In this  section we assume that the residue field $k$ of $R$ is finite.

Let $X$ be a smooth and proper $R$-scheme. The \'etale Chern character class induces a map $\ch_{n,i}^{\et}\colon K_{i}(X) \to H^{2n-i}_{\et}(X_{K}, \Q_{p}(n))$. It follows from the crystalline Weil conjectures \cite{ChiarellottoLeStum} and Faltings' crystalline comparison theorem \cite{FaltingsCrystalline} that $H^{2n-i}_{\et}(X_{\ol K}, \Q_{p}(n))^{G_{K}}=0$ for $i>0$, where $\ol K$ denotes an algebraic closure of $K$ and $G_{K}=\Gal(\ol K/K)$. Hence the Hochschild-Serre spectral sequence for $X_{\ol K} \to X_{K}$ induces an edge morphism $H^{2n-i}_{\et}(X_{K}, \Q_{p}(n)) = \Fil^{1}H^{2n-i}_{\et}(X_{K}, \Q_{p}(n)) \to H^{1}(G_{K}, H^{2n-i-1}_{\et}(X_{\ol K}, \Q_{p}(n)))$ and the composition
\begin{equation}
r_{p}\colon K_{i}(X) \xrightarrow{\ch_{n,i}^{\et}} H^{2n-i}_{\et}(X_{K}, \Q_{p}(n)) \xrightarrow{\text{edge}} H^{1}(G_{K}, H^{2n-i-1}_{\et}(X_{\ol K}, \Q_{p}(n)))
\end{equation}•
is the \emph{\'etale $p$-adic regulator}.

According to the de Rham comparison theorem \cite{FaltingsCrystalline} we have an isomorphism of filtered vector spaces
\[
D_{\dR}(H^{2n-i-1}_{\et}(X_{\ol K}, \Q_{p}(n))) \cong H^{2n-i-1}_{\dR}(X_{K}/K)(n)
\]
where the twist by $n$ on the right hand side only shifts the filtration. Hence
\[
D_{\dR}(H^{2n-i-1}_{\et}(X_{\ol K}, \Q_{p}(n)))/\Fil^{0} \cong H^{2n-i-1}_{\dR}(X_{K}/K)/\Fil^{n} \cong H^{2n-i}_{\rel}(X,n)
\]
(see Remark \ref{nov0203}(iii)).
In particular, the Bloch-Kato exponential for the $G_{K}$-re\-pre\-sen\-ta\-tion $H^{2n-i-1}_{\et}(X_{\ol K}, \Q_{p}(n))$ is a map
\[
\exp\colon H^{2n-i-1}_{\dR}(X_{K}/K)/\Fil^{n} \to H^{1}(G_{K}, H^{2n-i-1}_{\et}(X_{\ol K}, \Q_{p}(n)))
\]
and from Nizio\l's work we get the
\begin{cor}\label{apr1201}
For each smooth, projective $R$-scheme $X$ the diagram
\[
\xymatrix@C+0.5cm{
K_{i}^{\rel}(X) \ar[d]^{\ch_{n,i}^{\rel}} \ar[r] & K_{i}(X) \ar[d]^{r_{p}}\\
H^{2n-i-1}_{\dR}(X_{K}/K)/\Fil^{n} \ar[r]^-{\exp} & H^{1}(G_{K}, H^{2n-i-1}_{\et}(X_{\ol K}, \Q_{p}(n)))
}
\]
commutes.
\end{cor}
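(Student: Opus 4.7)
The plan is to reduce this corollary to two inputs: Theorem \ref{thm:Comparison} of the paper, and a compatibility result of Nizio\l\ identifying the image of the syntomic regulator in Galois cohomology (via the Bloch-Kato exponential) with the étale $p$-adic regulator.

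First, I would apply Theorem \ref{thm:Comparison} to $X$, obtaining the commutative square
\[
\xymatrix@C+0.3cm{
K_{i}^{\rel}(X) \ar[d]_{\ch_{n,i}^{\rel}} \ar[r] & K_{i}(X) \ar[d]^{\ch_{n,i}^{\syn}} \\
H^{2n-i}_{\rel}(X,n) \ar[r]^-{\eqref{mar0701}} & H^{2n-i}_{\syn}(X,n).
}
\]
Because $X$ is smooth and projective and $i \geq 1$, the degree $2n-i$ avoids $\{2n, 2n+1, 2n+2\}$, so Lemma \ref{nov0202} guarantees that the lower horizontal arrow is an isomorphism. Remark \ref{nov0203}(iii) then canonically identifies both $H^{2n-i}_{\rel}(X,n)$ and $H^{2n-i}_{\syn}(X,n)$ with $H^{2n-i-1}_{\dR}(X_{K}/K)/\Fil^{n}$, and under this identification the left vertical map of the corollary's diagram agrees with $\ch_{n,i}^{\syn}$ precomposed with $K_{i}^{\rel}(X) \to K_{i}(X)$.

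It therefore remains to check that the outer square
\[
\xymatrix@C+0.5cm{
K_{i}(X) \ar[d]_{\ch_{n,i}^{\syn}} \ar[r]^-{r_{p}} & H^{1}(G_{K}, H^{2n-i-1}_{\et}(X_{\ol K}, \Q_{p}(n))) \\
H^{2n-i}_{\syn}(X,n) \ar[r]^-{\cong} & H^{2n-i-1}_{\dR}(X_{K}/K)/\Fil^{n} \ar[u]_{\exp}
}
\]
commutes on all of $K_{i}(X)$. This is exactly Nizio\l's comparison theorem between the rigid syntomic Chern character and the étale Chern character for smooth projective schemes with good reduction: in the range $i > 0$, the vanishing of $H^{2n-i}_{\et}(X_{\ol K}, \Q_{p}(n))^{G_{K}}$ (already invoked in the introduction of Subsection \ref{ssec:applications} via the Weil conjectures and Faltings' crystalline comparison) makes the Hochschild-Serre edge map into $H^{1}(G_{K},-)$ the full content of $r_{p}$, and Nizio\l's result identifies that composition with the syntomic regulator composed with the Bloch-Kato exponential applied to $H^{2n-i-1}_{\dR}(X_{K}/K)/\Fil^{n}$.

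The main obstacle is notational: Nizio\l's comparison is phrased in terms of the syntomic cohomology originally defined by Besser (or her own variant thereof), whereas our $R\Gamma_{\syn}(X,n)$ is a quasi-isomorphic but distinct complex. One therefore needs to verify that the chain of quasi-isomorphisms connecting our syntomic complex to Besser's (indicated in the remark after the definition of $R\Gamma_{\syn}$) is compatible with the syntomic Chern classes on both sides and with the identification $H^{2n-i}_{\syn}(X,n) \cong H^{2n-i-1}_{\dR}(X_K/K)/\Fil^{n}$ used to formulate the Bloch-Kato exponential. Once that bookkeeping is done, combining the reduction above with Nizio\l's theorem yields the commutativity asserted in Corollary \ref{apr1201}.
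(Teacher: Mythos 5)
Your proposal is correct and follows essentially the same route as the paper: reduce via Theorem \ref{thm:Comparison} and the identification $H^{2n-i}_{\rel}(X,n)\cong H^{2n-i-1}_{\dR}(X_{K}/K)/\Fil^{n}$, then invoke Nizio\l's comparison together with Besser's results (his Prop.~9.9, Cor.~9.10, Prop.~9.11) identifying the composite of the syntomic regulator with the Bloch--Kato exponential. The bookkeeping you flag at the end is dealt with in the paper by the remark that Besser's fundamental Proposition 6.3 holds for its version of the syntomic complex, so Besser's subsequent constructions apply verbatim.
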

\begin{proof}
It follows from Nizio\l's work \cite{NiziolImg, NiziolCrys} and the comparison with Besser's syntomic cohomology 
\cite[Prop. 9.9]{Besser} that there is a natural map $H^{*}_{\syn}(X,n) \to H_{\et}^{\ast}(X_{K}, \Q_{p}(n))$ which is compatible with Chern classes \cite[Cor. 9.10]{Besser}.  By \cite[Prop. 9.11]{Besser} the composition $H^{2n-i-1}_{\dR}(X_{K}/K)/\Fil^{n} \to H^{2n-i}_{\syn}(X,n) \to H^{2n-i}_{\et}(X_{K}, \Q_{p}(n)) \to H^{1}(G_{K}, H^{2n-i-1}_{\et}(X_{\ol K}, \Q_{p}(n)))$ 
is the Bloch-Kato exponential for $H^{2n-i-1}_{\et}(X_{\ol K}, \Q_{p}(n))$. Hence the claim follows from the comparison of the relative Chern character with the syntomic regulator in Theorem \ref{thm:Comparison}.
\end{proof}
\begin{rem}\label{nov1504}
By Calvo's result (Lemma \ref{lemma:RelKSmoothSeparated}(ii)) we have an isomorphism $K_{\top}^{-i}(X) \cong K_{i}(X_{k})$. 
For $X_{k}$ smooth and projective these groups are conjectured to be torsion (Parshin's conjecture). This would imply that $K_{i}^{\rel}(X) \to K_{i}(X)$ is rationally an isomorphism.
It follows from \cite{Harder} and \cite{QuillenK} that this conjecture is true for $\dim X_{k} \leq 1$. 
\end{rem}

From the previous corollary together with our earlier work \cite{TammeBorel, TammeDiss}, we get a new proof of the main result of \cite{HK}: Huber and Kings introduce the \emph{$p$-adic Borel regulator} $r_{\mathrm{Bo},p}\colon K_{2n-1}(R) \to K$ by imitating the construction of the classical Borel regulator for the field of complex numbers, replacing the van Est isomorphism by the Lazard isomorphism.
\begin{cor}[Huber-Kings \cite{HK}] \label{cor:HK}
Let $K$ be a finite extension of $\Q_{p}$. The diagram
\[
\xymatrix{
K_{2n-1}(R) \ar[d]_{\frac{(-1)^{n}}{(n-1)!}r_{\mathrm{Bo},p}} \ar[dr]^{r_{p}}\\
K=D_{\dR}(\Q_{p}(n)) \ar[r]^-{\exp} & H^{1}(G_{K}, \Q_{p}(n))
}
\]
commutes.
\end{cor}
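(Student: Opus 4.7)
The plan is to derive Corollary \ref{cor:HK} by specialising Corollary \ref{apr1201} to $X=\Spec(R)$ and then bridging the resulting diagram to the one in the statement via the comparison between Karoubi's relative Chern character and the $p$-adic Borel regulator established in \cite{TammeBorel, TammeDiss}.

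First I would apply Corollary \ref{apr1201} to the smooth projective $R$-scheme $X=\Spec(R)$ in degree $i=2n-1$. Since $X_K=\Spec(K)$ we have $H^{0}_{\dR}(X_K/K)=K$ with $\Fil^n K=0$ for $n\geq 1$, and the Galois representation reduces to $H^0_{\et}(\Spec(\ol K),\Q_p(n))=\Q_p(n)$ with $D_{\dR}(\Q_p(n))=K$. Thus Corollary \ref{apr1201} yields the commutative square
\[
\xymatrix@C+0.5cm{
K_{2n-1}^{\rel}(R) \ar[d]_{\ch_{n,2n-1}^{\rel}} \ar[r] & K_{2n-1}(R) \ar[d]^{r_{p}}\\
K \ar[r]^-{\exp} & H^{1}(G_{K}, \Q_{p}(n)).
}
\]
So it suffices to identify the composition $K_{2n-1}(R)\xrightarrow{\frac{(-1)^n}{(n-1)!}r_{\mathrm{Bo},p}}K$ with the left vertical arrow after a suitable identification.

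Second, I would invoke the comparison proved in \cite{TammeBorel} (building on \cite{HamidaCR}), which, as recalled in the introduction of the present paper, identifies Karoubi's relative Chern character on $\Spec(R)$ with the Huber--Kings $p$-adic Borel regulator up to the normalisation factor $\frac{(-1)^n}{(n-1)!}$; concretely, the composition
\[
K_{2n-1}^{\rel}(R)\longrightarrow K_{2n-1}(R)\xrightarrow{\;\frac{(-1)^n}{(n-1)!}\,r_{\mathrm{Bo},p}\;} K
\]
equals $\ch_{n,2n-1}^{\rel}$. Together with the square above, this already gives the commutativity of the triangle in the statement \emph{after} precomposition with $K_{2n-1}^{\rel}(R)\to K_{2n-1}(R)$.

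Finally, to upgrade to a statement on $K_{2n-1}(R)$ itself, I would use Remark \ref{nov1504}: by Calvo's theorem $K^{-(2n-1)}_{\top}(R)\cong K_{2n-1}(k)$, which is finite by Quillen's computation \cite{QuillenK} of the $K$-theory of the finite field $k$. The long exact sequence of Lemma \ref{lem:KRel} then shows that $K_{2n-1}^{\rel}(R)\otimes\Q\xrightarrow{\cong} K_{2n-1}(R)\otimes\Q$; since the target $K$ is a $\Q$-vector space, the desired triangle commutes integrally.

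The main obstacle is bookkeeping rather than conceptual: one must ensure that the normalisation constants in the definition of the $p$-adic Borel regulator in \cite{HK}, in the trace map used to define $\ch_{n,2n-1}^{\rel}$, and in the comparison of \cite{TammeBorel} are all consistent, so that the prefactor $\tfrac{(-1)^n}{(n-1)!}$ appears exactly where the corollary claims it does. All other steps are formal consequences of already established results.
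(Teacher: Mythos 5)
Your argument is correct and is essentially the paper's own proof: specialise Corollary \ref{apr1201} to $X=\Spec(R)$, $i=2n-1$, invoke the comparison of the relative Chern character with Karoubi's original construction (Theorem \ref{thm:KaroubisRelChern}) together with \cite[Cor.~7.23]{TammeDiss} to identify $\ch^{\rel}_{n,2n-1}$ with $\tfrac{(-1)^n}{(n-1)!}r_{\mathrm{Bo},p}$ precomposed with $K^{\rel}_{2n-1}(R)\to K_{2n-1}(R)$, and use that this map is rationally an isomorphism (Remark \ref{nov1504}, via Quillen's computation for finite fields). No gaps.
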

\begin{rem*}
The factor $\frac{(-1)^{n}}{(n-1)!}$ appears since Huber and Kings use Chern classes in the normalization of both, the \'etale and the $p$-adic Borel regulator, whereas we used Chern character classes in the definition of the \'etale regulator.
\end{rem*}

\begin{proof}
Apply the previous corollary with $X=\Spec(R)$, $i=2n-1$. 
As mentioned in Remark \ref{nov1504},
$K_{2n-1}^{\rel}(R) \to K_{2n-1}(R)$ is rationally an isomorphism.
In Theorem \ref{thm:KaroubisRelChern}  we will show that the present version of the relative Chern character coincides with Karoubi's original construction. For this we showed in \cite[Corollary 7.23]{TammeDiss} that the diagram 
\[
\xymatrix@R-0.2cm{
K^{\rel}_{2n-1}(R) \ar[r] \ar[d]_{\ch_{n,2n-1}^{\rel}} & K_{2n-1}(R) \ar[dl]^{\frac{(-1)^{n}}{(n-1)!}r_{\mathrm{Bo},p}} \\
K
}
\]
 commutes.
\end{proof}

\section{Comparison with Karoubi's original construction}
\label{sec:Karoubi}

As before $R$ denotes a complete discrete valuation ring with field of fractions $K$ of characteristic $0$ and residue field $k$ of characteristic $p>0$.
In this section we compare the relative Chern character of Section \ref{sec:Construction} with Karoubi's original construction \cite{KarCR, Kar87} in the form of \cite{TammeDiss}. For any smooth, affine $R$-scheme $X=\Spec(A)$ this is a homomorphism
\[
\ch_{n,i}^{\mathrm{Kar}}\colon K_{i}^{\rel}(X) \to \Hyp^{2n-i-1}(\widehat X_{K}, \Omega^{<n}_{\widehat X_{K}/K}) = H^{2n-i-1}(\Omega^{<n}(\widehat X_{K})).
\]
We recall the main steps in its construction.

\begin{Nr}
We begin with some preliminaries concerning integration (see also \cite[Appendix]{TammeBorel}). Consider the polynomial ring $\Q[\Delta^{p}] = \Q[x_{0},\dots, x_{n}]/(\sum_{i}x_{i}-1)$. There is a well defined integration map $\int_{\Delta^{p}}\colon \Omega^{p}_{\Q[\Delta^{p}]/\Q} \to \Q$ sending an algebraic $p$-form $\omega$ to the integral of $\omega$, considered as a smooth $p$-form, over the real standard simplex $\mathbf \Delta^{p}:=\{(x_{0}, \dots, x_{p}) \in \R^{p+1}\,|\,\sum_{i} x_{i} = 1, \forall i: 0\leq x_{i}\leq 1\} \subseteq \R^{p+1}$ with orientation given by the form $dx_{1}\cdots dx_{p}$. This integral is in fact a rational number. It satisfies Stokes's formula
\begin{equation}\label{mar2006}
\int_{\Delta^{p}} d\omega = \sum_{i=0}^{p} (-1)^{i} \int_{\Delta^{p-1}} (\del^{i})^{*}\omega.
\end{equation}

Similarly one can define an integration map $\int_{\Delta^{p}\times\Delta^{q}}\colon \Omega^{p+q}_{\Q[\Delta^{p}]\otimes_{\Q}\Q[\Delta^{q}]/\Q} \to \Q$, again given by the usual integration over the product of real simplices $\mathbf \Delta^{p}\times \mathbf \Delta^{q}$. There is a canonical decomposition of $\mathbf \Delta^{p}\times \mathbf \Delta^{q}$ into copies of the standard $p+q$-simplex $\mathbf \Delta^{p+q}$, indexed by all $(p,q)$-shuffles $(\mu,\nu)$ (cf.~the proof of Theorem \ref{thm:KaroubisRelChern} below). It follows from this and the analogous formula for smooth differential forms that 
\begin{equation}\label{mar2007}
\int_{\Delta^{p}\times\Delta^{q}} \omega = \sum_{(\mu,\nu)} \sgn(\mu,\nu) \int_{\Delta^{p+q}} (\mu,\nu)^{*}\omega,
\end{equation}
where the sum runs over all $(p,q)$-shuffles and $(\mu,\nu)^{*}\omega\in \Omega^{p+q}_{\Q[\Delta^{p+q}]/\Q}$ is the pullback of $\omega \in \Omega^{p+q}_{\Q[\Delta^{p}]\otimes_{\Q}\Q[\Delta^{q}]/\Q}$ to the simplex corresponding to the shuffle $(\mu,\nu)$. 

Tensoring the $\Q$-linear integration map with $K$, we get $\int_{\Delta^{p}}\colon \Omega^{p}_{K[\Delta^{p}]/K} \to K$.
More generally, for a $K$-algebra $A$ we can define an integration map $\int_{\Delta^{p}}\colon \Omega^{n}_{A[\Delta^{p}]/K} \to \Omega^{n-p}_{A/K}$ using the decomposition
$\Omega^{n}_{A[\Delta^{p}]/K} \cong \bigoplus_{k+l=n} \Omega^{k}_{A/K} \otimes_{K} \Omega^{l}_{K[\Delta^{p}]/K}$.

If $A$ is a $K$-dagger algebra one can show 
similarly as in \cite[Appendix]{TammeBorel} that by continuity this extends uniquely to a map
\[
\int_{\Delta^{p}}\colon \Omega^{n}_{A\<\Delta^{p}\>^{\dag}/K,\mathrm f} \to \Omega^{n-p}_{A/K, \mathrm f},
\]
where the subscript $\mathrm f$ indicates that we consider differential forms for dagger algebras (cf. \ref{mar2005}).
The analogues of \eqref{mar2006} and \eqref{mar2007} remain valid.
\end{Nr}

\begin{Nr}
Let $Y_{\dot}$ be a simplicial dagger space. A \emph{simplicial} $n$-form $\omega$ on $Y_{\dot}$ is a collection of $n$-forms $\omega_{p}$ on $Y_{p} \times \widehat\Delta^{p}_{K}$, $p\geq 0$, satisfying $(1\times\phi_{\Delta})^{*}\omega_{p} = (\phi_{X}\times 1)^{*}\omega_{q}$ on $Y_{p}\times\widehat\Delta^{q}_{K}$ for all monotone maps $\phi\colon [q]\to [p]$, $\phi_{\Delta},\phi_{X}$ denoting the induced (co)simplicial structure maps. We will often denote $\omega_{p}$ by $\omega|_{Y_{p}\times\widehat\Delta^{p}_{K}}$.

The space of all simplicial $n$-forms is denoted by $D^{n}(Y_{\dot})$. Applying the wedge product and exterior differential component-wise makes $D^{*}(Y_{\dot})$ into a commutative differential graded algebra. 

For a $K$-dagger space $X$ we write $\Omega^{*}(X)$ for the complex of global sections $\Gamma(X, \Omega^{*}_{X/K})$.
Define $\Omega^{*}(Y_{\dot})$ as the total complex  of the cosimplicial complex $[p]\mapsto \Omega^{*}(Y_{p})$.
By Dupont's Theorem \cite[Thm. 5.6]{TammeDiss}
\begin{equation}\label{eq:DefI}
I\colon D^{*}(Y_{\dot}) \to \Omega^{*}(Y_{\dot}), \quad \omega \mapsto \sum_{k} \int_{\Delta^{k}} \omega|_{Y_{k}\times\widehat\Delta^{k}_{K}}
\end{equation}
is a quasi-isomorphism.

We have a decomposition $\Omega^n_{X\times\widehat\Delta^k_{K}/K} \cong \bigoplus_{p+q=n} \Omega^{p,q}_{X\times\widehat\Delta^k_{K}/K}$ where 
$\Omega^{p,q}_{X\times\widehat\Delta^k_{K}/K} :=
\operatorname{pr}_X^*\Omega^p_{X/K} \otimes \operatorname{pr}_{\widehat\Delta^k_{K}}^*\Omega^q_{\widehat\Delta^k_{K}/K}$
and hence the filtration $\Fil_X^\dot\Omega^*(X\times\widehat\Delta^k_{K})$ with respect to the first degree. This induces
a filtration $\Fil^\dot$ on $D^{*}(Y_{\dot})$  and $I$ is a filtered quasi-isomorphism when $\Omega^{*}(Y_{\dot})$ 
carries the filtration given by $\Fil^{n}\Omega^{*}(Y_{\dot}) = \Omega^{\geq n}(Y_{\dot})$.
\end{Nr}

\begin{Nr}
Let $X$ be an affine scheme in $\Sm_{R}$ with generic fibre $X_{K}$. Recall the complexes $\Fil^{n}R\Gamma_{\dR}(X_{K}/K)$ from \eqref{eq:FilDRCompl}. Similarly as in \eqref{eq:GdCompMap}, \eqref{eq:ComplCompMap} there is a natural map
\begin{equation}\label{eq:KarCompMap1}
\Fil^{n}R\Gamma_{\dR}(X_{K}/K) \to \Gamma(X_{K}, \Gd_{Pt(X_{K}^{\dag})\sqcup Pt(X_{K})} \Omega^{\geq n}_{X_{K}/K})
\end{equation}
where the complex on the right computes the hypercohomology of $\Omega^{\geq n}_{X_{K}/K}$ on $X_{K}$. Since $X_{K}$ is affine the sheaves $\Omega^{i}_{X_{K}/K}$ are acyclic and hence the coaugmentation 
\begin{equation}\label{eq:KarCompMap2}
  \Gamma(X_{K}, \Gd_{Pt(X_{K}^{\dag})\sqcup Pt(X_{K})} \Omega^{\geq n}_{X_{K}/K}) \leftarrow \Gamma(X_{K}, \Omega^{\geq n}_{X_{K}/K}) 
\end{equation}
is a quasi-isomorphism.  The maps \eqref{eq:KarCompMap1}, \eqref{eq:KarCompMap2}, and 
\[
\Gamma(X_{K}, \Omega^{\geq n}_{X_{K}/K}) \to \Gamma(X_{K}^{\dag}, \Omega^{\geq n}_{X_{K}^{\dag}/K}) \to \Gamma(\widehat X_{K}, \Omega^{\geq n}_{\widehat X_{K}/K})
\]
give  a chain of morphisms
connecting $\Fil^{n}R\Gamma_{\dR}(X_{K}/K)$ with $\Gamma(\widehat X_{K}, \Omega^{\geq n}_{\widehat X_{K}/K})$ where the arrow \eqref{eq:KarCompMap2} pointing in the wrong direction is a quasi-isomorphism. All these morphisms are functorial in $X\in \Sm_{R}$. We denote this by
\begin{equation}\label{eq:KarCompMap3}
\Fil^{n}R\Gamma_{\dR}(X_{K}/K) \to\xleftarrow{\qis} \Gamma(\widehat X_{K}, \Omega^{\geq n}_{\widehat X_{K}/K}).
\end{equation}
If $Y_{\dot}$ is a simplicial smooth, affine $R$-scheme this and the quasi-isomorphism $I$ from \eqref{eq:DefI} give a natural chain of morphisms
\begin{equation}\label{eq:KarCompMap4}
\Fil^{n}R\Gamma_{\dR}(Y_{K,\dot}/K) \to\xleftarrow{\qis} \Fil^{n}D^{*}(\widehat Y_{K,\dot})
\end{equation}
\end{Nr}

We apply \eqref{eq:KarCompMap4} to $B_{\dot}\GL_{r,R}$ to get a map
\begin{equation}\label{eq:MapDeRhamDupont}
\Fil^{n}H^{2n}_{\dR}(B_{\dot}\GL_{r,K}/K) \to H^{2n}(D^{*}(B_{\dot}\widehat\GL_{r,K})).
\end{equation}
Similarly as in Lemma \ref{nov1204} there is an isomorphism
\begin{equation}\label{mar0801}
H^{2n}\left(\MF\big(\Fil^{n}D^{*}(B_{\dot}\widehat\GL_{r,K}) \to D^{*}(E_{\dot}\widehat\GL_{r,K})\big)\right) \xrightarrow{\cong}
H^{2n}(D^{*}(B_{\dot}\widehat\GL_{r,K}))
\end{equation}
and we define 
\begin{equation}\label{eq:DefChnKar}
\ch_{n,(r)}^{\mathrm{Kar}} \in H^{2n}\left(\MF\big(\Fil^{n}D^{*}(B_{\dot}\widehat\GL_{r,K}) \to D^{*}(E_{\dot}\widehat\GL_{r,K})\big)\right)\end{equation}
to be the unique element whose image under \eqref{mar0801} coincides with
 the image  of $\ch_{n,(r)}^{\dR}$  under \eqref{eq:MapDeRhamDupont}.

\begin{rem*}
Originally, Karoubi used Chern-Weil theory to construct the relevant characteristic classes. This is the reason for the use of the dga $D^{*}(B_{\dot}\widehat\GL_{K})$. One advantage is that it gives more explicit formulas for the relative Chern character. For instance, these are used for the comparison theorem in \cite{TammeBorel} and in the work of Choo and Snaith \cite{ChooSnaith}. It was checked in \cite[Prop. 5.13]{TammeDiss} that the present approach yields the same classes as the Chern-Weil theoretic one.
\end{rem*}

To describe Karoubi's version of the relative Chern character we first need a Lemma. Let $X=\Spec(A)$ be a regular, affine $R$-scheme. 
We consider $B_\dot\GL(A)$ as a bisimplicial set which is constant in the second direction. Using this, we define the bisimplicial set $F^\flat(A) := B_\dot\GL(A) \times_{B_\dot\GL(A^\dag\<\Delta^\dot\>^\dag)} E_\dot\GL(A^\dag\<\Delta^\dot\>^\dag)$ and similarly $F^{\flat}_{r}(A)$ for every finite level $r$. These are bisimplicial subsets of $F(A)$, respectively $F_{r}(A)$, from \eqref{nov0904} and \eqref{dez1202}.
\begin{lemma}\label{mar2003}
 The induced map on complexes $\Z F^\flat(A) \to \Z F(A)$ is a quasi-isomorphism.
\end{lemma}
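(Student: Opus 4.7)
I would exploit the fibre-product structure of both $F^{\flat}(A)$ and $F(A)$ to reduce the claim to a statement about the comparison of $B_{\dot}\GL$ for the ring $A$ versus the simplicial ring $A[\Delta^{\dot}]$.

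More precisely, both bisimplicial sets are pullbacks of the universal Kan fibration $E_{\dot}\GL(R)\to B_{\dot}\GL(R)$ (with $R$ the appropriate dagger or affinoid algebra) along maps coming from the first factor. Contractibility of $E_{\dot}\GL$ implies that the chain complexes $\Z F^{\flat}(A)$ and $\Z F(A)$ are quasi-isomorphic to the chain-level homotopy fibres of the natural maps
\[
\Z B_{\dot}\GL(A)\to \Z B_{\dot}\GL\bigl(A^{\dag}\langle\Delta^{\dot}\rangle\bigr) \quad\text{and}\quad \Z B_{\dot}\GL\bigl(A[\Delta^{\dot}]\bigr)\to \Z B_{\dot}\GL\bigl(A^{\dag}\langle\Delta^{\dot}\rangle^{\dag}\bigr),
\]
respectively. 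The induced comparison $\Z F^{\flat}(A)\to\Z F(A)$ then lives over a natural commutative square connecting these two fibration sequences, with vertical maps coming from $A\hookrightarrow A[\Delta^{\dot}]$ and from the comparison of convergent with overconvergent simplex algebras.

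The crucial input is the agreement of Karoubi--Villamayor and Quillen K-theory for regular rings (Gersten \cite{Gersten}), combined with the plus construction: it implies that $\Z B_{\dot}\GL(A)\to \Z\diag B_{\dot}\GL(A[\Delta^{\dot}])$ is a quasi-isomorphism, both chain complexes having the integer homology of the K-theory space of $A$. The analogous statement on the target side, comparing $\Z B_{\dot}\GL(A^{\dag}\langle\Delta^{\dot}\rangle)$ with $\Z\diag B_{\dot}\GL(A^{\dag}\langle\Delta^{\dot}\rangle^{\dag})$, follows from Calvo's theorem (Remark \ref{nov0201}(ii)) applied at each simplicial level together with the plus construction in the same way.

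The main technical obstacle is the five-lemma / spectral sequence argument extracting the quasi-isomorphism of homotopy fibres from the quasi-isomorphisms on sources and targets of the two fibration sequences. Since the bases $B_{\dot}\GL(\cdots)$ are not simply connected, the naive Serre spectral sequence comparison requires controlling the $\pi_{1}$-action on fibre homology. The cleanest way to handle this is to work directly with the filtered double complexes associated with the bisimplicial structure, using Eilenberg--Zilber to identify $\Z\diag$ with the total complex and then comparing the bar-degree filtrations column-by-column; the previous paragraph provides the required quasi-isomorphism of columns, and the convergent spectral sequence argument then yields the claim without ever having to invoke $\pi_{1}$-equivariant Serre spectral sequences.
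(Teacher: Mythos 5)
Your opening reduction is the right one and matches the paper's strategy: both $\diag F^\flat(A)$ and $\diag F(A)$ are pullbacks of the same Kan fibration, so everything comes down to the map of bases $B_\dot\GL(A)\to\diag B_\dot\GL(A[\Delta^\dot])$, and the essential input is indeed homotopy invariance of K-theory for regular rings together with the plus construction. (The comparison of $A^\dag\<\Delta^\dot\>$ with $A^\dag\<\Delta^\dot\>^\dag$ via Calvo is not actually needed: the two fibre products differ only in their bases.) The gap is in the step that carries all the weight. Knowing that the map of bases is a quasi-isomorphism on $\Z[-]$ does not let you conclude anything about the fibre products, and both of your attempts to bridge this fail. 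First, $\Z F^\flat(A)$ is \emph{not} quasi-isomorphic to the mapping fibre of $\Z B_\dot\GL(A)\to\Z B_\dot\GL(A^\dag\<\Delta^\dot\>^\dag)$: the homology of a homotopy fibre is not the cone on chains when the base is not simply connected (compare $\Omega S^1$ with $\MF(\Z\to\Z[S^1])$), and $B_\dot\GL$ has enormous $\pi_1$. Second, the proposed repair via the bar-degree filtration breaks down at $E_1$: in bar degree $p$ the two columns are $\Z\bigl[\GL(A)^p\times\GL(A^\dag\<\Delta^\dot\>^\dag)\bigr]$ and $\Z\bigl[\GL(A[\Delta^\dot])^p\times\GL(A^\dag\<\Delta^\dot\>^\dag)\bigr]$, and $\Z[\GL(A)^p]\to\Z[\GL(A[\Delta^\dot])^p]$ is not a quasi-isomorphism --- the target is the chain complex of the $p$-th power of the simplicial group $\GL(A[\Delta^\dot])$, whose higher homotopy groups are the $KV_{*+1}(A)$ and whose $\pi_0$ is a proper quotient of $\GL(A)$. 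So the "quasi-isomorphism of columns" you invoke is simply not there.

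What is actually needed is the strictly stronger statement that $B_\dot\GL(A)\to\diag B_\dot\GL(A[\Delta^\dot])$ is an \emph{acyclic map}, i.e.\ a homology isomorphism for every local coefficient system pulled back from the target; acyclicity, unlike a plain integral homology isomorphism, is preserved under pullback along Kan fibrations (Berrick), which applied to the common fibration immediately gives that $\diag F^\flat(A)\to\diag F(A)$ is acyclic, whence the lemma after Eilenberg--Zilber. Acyclicity of the base map is where the plus construction really enters: $B_\dot\GL(A)\to B_\dot\GL(A)^+$ is acyclic by construction, $B_\dot\GL(A)^+\to(\diag B_\dot\GL(A[\Delta^\dot]))^+$ is a weak equivalence by homotopy invariance for regular rings, and $\diag B_\dot\GL(A[\Delta^\dot])\to(\diag B_\dot\GL(A[\Delta^\dot]))^+$ is a weak equivalence because the source is an H-space; hence the composite, and therefore the original map, is acyclic. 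Your write-up records only "quasi-isomorphism on $\Z B_\dot\GL$", which is too weak to propagate to the fibres, and this is precisely the $\pi_1$-issue you flagged but did not resolve.
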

\begin{proof}
By the Eilenberg-Zilber theorem it suffices to show that $\Z\diag F^{\flat}(A) \to \Z\diag F(A)$ is a quasi-isomorphism. For this it is enough to show that the map $\diag F^{\flat}(A) \to \diag F(A)$ is acyclic. By the definitions we have a pullback square
\begin{equation}\label{mar0802}\begin{split}
\xymatrix@C-0.3cm@R-0.2cm{
\diag F^{\flat}(A) \ar[r]\ar@{->>}[d] & \diag F(A) \ar@{->>}[d] \\
B_{\dot}\GL(A) \ar[r] & \diag B_{\dot}\GL(A[\Delta^{\dot}]).
}
\end{split}
\end{equation}
Since $\diag E_{\dot}\GL(A^{\dag}\<\Delta^\dot\>^\dag) \to \diag B_{\dot}\GL(A^{\dag}\<\Delta^\dot\>^\dag)$ is a Kan fibration, so are the vertical maps in \eqref{mar0802}.
It thus suffices to show that the lower horizontal map is acyclic (cf.~\cite[(4.1)]{Berrick}).

Let $(\,.\,)^{+}$ denote Quillen's plus construction. Then $\pi_{i}(B_{\dot}\GL(A)^{+})=K_{i}(A), i\geq 1$, are the Quillen K-groups of $A$.
The map $B_{\dot}\GL(A) \to B_{\dot}\GL(A)^{+}$ is acyclic.
We have a commutative diagram
\[
\xymatrix@R-0.2cm{
B_{\dot}\GL(A) \ar[r] \ar[d]_{\text{acyclic}} & \diag B_{\dot}\GL(A[\Delta^{\dot}]) \ar[d]^{\simeq}\\
B_{\dot}\GL(A)^{+} \ar[r]^-{\simeq} & \diag B_{\dot}\GL(A[\Delta^{\dot}])^{+}
}
\]
where the lower horizontal map is a weak equivalence by the homotopy invariance for K-theory of regular rings, and the right vertical map is a weak equivalence since $B_{\dot}\GL(A[\Delta^{\dot}])$ has the homotopy type of an H-space. Hence the upper horizontal map is acyclic, as desired.
\end{proof}

Karoubi's relative Chern character $\ch_{n,i}^{\mathrm{Kar}}$ is defined as the composition
\begin{align*}
K_{i}^{\rel}(X) &\to H_{i}(\diag F(A), \Z) && \text{(Hurewicz)} \notag \\
&\cong H_{i}(\diag F^\flat(A), \Z)  && \text{(Lemma \ref{mar2003})} \\
&\cong \varinjlim_{r} H_{i}(\diag F^{\flat}_{r}(A),\Z) \\
&\to H^{2n-i-1}(\Omega^{<n}(\widehat X_{K})) &&\text{(using \eqref{eq:FastKaroubisRelChern})}
\end{align*}
where the last map is constructed as follows (cf. \cite[Remark 3.6(ii)]{TammeDiss}):
%
%
%
An $i$-simplex $\sigma$ in $\diag F^\flat_{r}(A)$ defines a pair of morphisms $\widehat X_{K}\xrightarrow{\sigma_0} B_{i}\widehat\GL_{r,K}$ and $\widehat X_{K} \times \widehat\Delta^{i}_{K} \xrightarrow{\sigma_1} E_{i}\widehat\GL_{r,K}$ such that 
\[
\xymatrix{
\widehat X_{K} \times \widehat \Delta^{i}_{K} \ar[r]^-{\sigma_1} \ar[d]_{\mathrm{proj.}} & E_{i}\widehat\GL_{r,K}\ar[d]^{p}   \\
\widehat X_{K} \ar[r]^-{\sigma_0} & B_{i}\widehat\GL_{r,K} 
}
\quad\text{and hence}
\xymatrix@C+0.5cm{
& E_{i}\widehat\GL_{r,K}\times\widehat\Delta^{i}_{K} \ar[d]^{p\times\id_{\Delta^{i}}} \\
 \widehat X_{K} \times \widehat\Delta^{i}_{K} \ar[ur]^{(\sigma_1,\pr_{\Delta^{i}})}   \ar[r]^-{\sigma_0\times\id_{\Delta^{i}}} & B_{i}\widehat\GL_{r,K} \times\widehat\Delta^{i}_{K}
}
\]
commute.
We can write 
\begin{equation}\label{mar2004}
\ch_{n,(r)}^{\mathrm{Kar}} = (\omega_0, \omega_1)
\end{equation}
with $\omega_0 \in \Fil^nD^{2n}(B_\dot\widehat\GL_{r,K}), \omega_1\in D^{2n}(E_\dot\widehat{\GL}_{r,K})$ and $d\omega_1=p^*\omega_0$.
Then 
\begin{multline*}
((\sigma_0\times\id_{\Delta^i})^*\omega_0, (\sigma_1,\pr_{\Delta^i})^*\omega_1)
:=\\
((\sigma_0\times\id_{\Delta^i})^*\omega_0|_{E_{i}\widehat\GL_{r,K}\times\widehat\Delta^{i}_{K}}, (\sigma_1,\pr_{\Delta^i})^*\omega_1|_{E_{i}\widehat\GL_{r,K}\times\widehat\Delta^{i}_{K}})
\end{multline*}
is a cycle of degree $2n$ in $\MF\left(\Fil^n_{\widehat X_K}\Omega^{*}(\widehat X_{K}\times\widehat\Delta^{i}_{K}) \to \Omega^{*}(\widehat X_{K}\times\widehat\Delta^{i}_{K})\right)$.
We now integrate along $\Delta^{i}$ and use the quasi-isomorphism $\MF\big(\Omega^{\geq n}(\widehat X_{K}) \to \Omega^{*}(\widehat X_{K})\big) \xrightarrow{\sim} \Omega^{<n}(\widehat X_{K})[-1]$ induced by the projection to the second component to get the map
\[
 F^\flat_{r}(A) \ni \sigma \mapsto \int_{\Delta^i} (\sigma_{1},\pr_{\Delta^{i}})^*\omega_{1} \in \Omega^{<n}(\widehat X_{K})^{2n-i-1}.
\]
This induces a well defined homomorphism 
\begin{equation}\label{eq:FastKaroubisRelChern}
H_{i}(\diag F^\flat_{r}(A), \Z) \to 
H^{2n-i-1}(\Omega^{<n}(\widehat X_{K}))
\end{equation}
compatible for varying $r$.

To compare $\ch_{n,i}^{\mathrm{Kar}}$ with our version of the relative Chern character we need the following
\begin{lemma}\label{lemma:ComparisonMap}
Let $X$ be a smooth, affine  $R$-scheme. There is a natural map
\begin{equation}\label{eq:ComparisonMap}
H^{*}_{\rel}(X,n) \to H^{*}\left(\MF(\Omega^{\geq n}(\widehat X_{K}) \to \Omega^{*}(\widehat X_{K}))\right) \cong H^{*-1}(\Omega^{<n}(\widehat X_{K})).
\end{equation}
\end{lemma}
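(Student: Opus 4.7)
Plan: The cohomological identification in the statement is immediate: the inclusion $\Omega^{\geq n}(\widehat X_K)\hookrightarrow \Omega^*(\widehat X_K)$ is a monomorphism of complexes with cokernel $\Omega^{<n}(\widehat X_K)$, so its cone is quasi-isomorphic to $\Omega^{<n}(\widehat X_K)$, hence $\MF(\Omega^{\geq n}(\widehat X_K)\to \Omega^*(\widehat X_K)) \simeq \Omega^{<n}(\widehat X_K)[-1]$ and $H^{*}(\MF)\cong H^{*-1}(\Omega^{<n}(\widehat X_K))$. The task thus reduces to the construction of a natural morphism $R\Gamma_\rel(X,n)\to \MF(\Omega^{\geq n}(\widehat X_K)\to \Omega^*(\widehat X_K))$ in the derived category.

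The plan is to build this morphism as the columnwise mapping fibre of a commutative ladder, partly via a zig-zag. The key acyclicity input is Kiehl's theorem for coherent sheaves on affinoid and dagger affinoid spaces \cite{GK}: since $X$ is affine, $X_K$ is an affine scheme and $\widehat X_K$ is a dagger affinoid, so $\Omega^i_{X_K/K}$ and $\Omega^i_{\widehat X_K/K}$ are $\Gamma$-acyclic. In particular, \eqref{eq:KarCompMap2} and the Godement augmentation $\Omega^*(\widehat X_K)\to \Gamma(\widehat X_K,\Gd_{Pt(\widehat X_K)}\Omega^*_{\widehat X_K/K}) = R\Gamma_\dR(\widehat X_K/K)$ are both quasi-isomorphisms. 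Combining these with \eqref{eq:KarCompMap1}, the inclusion $\Omega^{\geq n}\hookrightarrow\Omega^*$ and restriction along $\widehat X_K\subseteq X_K^\dag\xrightarrow{\iota}X_K$ assembles into the ladder
\[
\xymatrix@C-0.3cm{
\Fil^n R\Gamma_\dR(X_K/K) \ar[r]^-{\eqref{eq:KarCompMap1}} \ar[d]_-{\eqref{eq:CompDeRhamRigid}} & \Gamma(X_K,\Gd\Omega^{\geq n}_{X_K/K}) \ar[d] & \Omega^{\geq n}(X_K) \ar[l]_-{\qis} \ar[r] \ar[d] & \Omega^{\geq n}(\widehat X_K) \ar[d] \\
R\Gamma_\dR(\widehat X_K/K) \ar@{=}[r] & R\Gamma_\dR(\widehat X_K/K) & \Omega^*(\widehat X_K) \ar[l]^-{\qis} \ar@{=}[r] & \Omega^*(\widehat X_K).
}
\]
The leftmost column has mapping fibre $R\Gamma_\rel(X,n)$ and the rightmost column has mapping fibre $\MF(\Omega^{\geq n}(\widehat X_K)\to \Omega^*(\widehat X_K))$. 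Taking mapping fibres columnwise using \ref{dez1201} and inverting the two horizontal quasi-isomorphisms (which become quasi-isomorphisms of mapping fibres by the usual five-lemma on the long exact sequences) yields the desired natural morphism, hence \eqref{eq:ComparisonMap} on cohomology.

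The main obstacle is the strict commutativity of the three squares of the ladder. The rightmost square is tautological, as restriction along $\widehat X_K\subseteq X_K$ commutes with the inclusion $\Omega^{\geq n}\hookrightarrow \Omega^*$. The middle square is the functoriality of the Godement construction \ref{Nr:Godement} applied to the commutative diagram of sites $Pt(\widehat X_K)\to Pt(X_K^\dag)\sqcup Pt(X_K)$ coming from $\widehat X_K\hookrightarrow X_K^\dag\xrightarrow{\iota}X_K$, combined with the inclusion $\Omega^{\geq n}\hookrightarrow\Omega^*$. Commutativity of the leftmost square follows from unwinding the definitions: both compositions $\Fil^n R\Gamma_\dR(X_K/K)\to R\Gamma_\dR(\widehat X_K/K)$ arise from the same underlying natural map $\Omega^{\geq n}_{\ol X/K}(\log D)\hookrightarrow \Omega^*_{\ol X/K}(\log D)\to j_*\iota_*\Omega^*_{X_K^\dag/K}$ of sheaves on any good compactification $\ol X$ of $X_K$ used in \eqref{eq:FilDRCompl}, followed by Godement functoriality, restriction to $\widehat X_K$, and global sections. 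Functoriality in $X\in \Sm_R$ is automatic from that of each ingredient.
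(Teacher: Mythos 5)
Your proposal is correct and follows essentially the same route as the paper: the paper's proof simply combines the chain \eqref{eq:KarCompMap3} (your top row) with the Godement coaugmentation quasi-isomorphism $\Omega^{*}(\widehat X_{K})\xrightarrow{\qis}R\Gamma_{\dR}(\widehat X_{K}/K)$ (your bottom row) and takes mapping fibres columnwise, inverting the wrong-way quasi-isomorphisms. Your additional verifications (Kiehl-type acyclicity on the affine/dagger-affinoid side, strict commutativity of the squares via functoriality of the Godement construction, and the identification $\MF(\Omega^{\geq n}\to\Omega^{*})\simeq\Omega^{<n}[-1]$) are exactly the details the paper leaves implicit.
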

\begin{proof}
We have $H^{*}_{\rel}(X,n) =
\MF(\Fil^{n}R\Gamma_{\dR}(X_{K}/K) \to R\Gamma_{\dR}(\widehat X_{K}/K))$.
Since $X$ is affine we have the natural chain of morphisms $\Fil^{n}R\Gamma_{\dR}(X_{K}/K) \to\xleftarrow{\qis} \Omega^{\geq n}(\widehat X_{K})$ from \eqref{eq:KarCompMap3} and similarly $R\Gamma_{\dR}(\widehat X_{K}/K) \xleftarrow{\qis} \Omega^{*}(\widehat X_{K})$. These together induce the desired chain of maps
\begin{equation*}
\MF\left(\Fil^{n}R\Gamma_{\dR}(X_{K}/K) \to R\Gamma_{\dR}(\widehat X_{K}/K)\right) \to \xleftarrow{\qis}
\MF\left(\Omega^{\geq n}(\widehat X_{K}) \to \Omega^{*}(\widehat X_{K})\right).  \qedhere
\end{equation*}
\end{proof}

\begin{thm}\label{thm:KaroubisRelChern}
Let $X$ be a smooth, affine $R$-scheme.
The composition of the relative Chern character $\ch_{n,i}^{\rel}$ of Definition \ref{nov1601} with the comparison map \eqref{eq:ComparisonMap} coincides with Karoubi's relative Chern character $\ch_{n,i}^{\mathrm{Kar}}$.
\end{thm}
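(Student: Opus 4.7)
The plan is to exhibit both $\ch^{\rel}_{n,i}$ composed with \eqref{eq:ComparisonMap}, and $\ch^{\mathrm{Kar}}_{n,i}$, as regulators in the sense of \ref{nov1205} applied to the same bisimplicial set $F_r^\flat(A)$, and then to appeal to the naturality lemma \ref{nov1301}. The bridge between the two universal classes is Dupont's integration quasi-isomorphism $I$ of \eqref{eq:DefI}.

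First I would replace $F_r$ by $F_r^\flat$. By Lemma \ref{mar2003}, the inclusion $\Z F_r^\flat(A) \hookrightarrow \Z F_r(A)$ is a quasi-isomorphism; $F_r^\flat$ is exactly the sub-bisimplicial set of $F_r$ of pairs $(\sigma_0,\sigma_1)$ whose $B$-component is constant along $\Delta^\dot_R$. By functoriality of the formalism of \ref{nov1205}, restriction to $\pi_i(F_r^\flat(A))$ still computes $\ch^{\rel}_{n,i}$. Next, the chain \eqref{eq:KarCompMap4} applied to $B_\dot\GL_{r,R}$ and $E_\dot\GL_{r,R}$, combined with Dupont's $I$ and a $D^*$-analogue of Lemma \ref{nov1204} (whose proof needs only contractibility of $E_\dot\widehat\GL_{r,K}$), shows that $\ch^{\rel}_{n,(r)}$ and $\ch^{\mathrm{Kar}}_{n,(r)}$ are the unique lifts of the same de Rham class $\ch^{\dR}_{n,(r)}$ to their respective mapping fibres, and correspond to one another under these quasi-isomorphisms. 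Applying Lemma \ref{nov1301} then identifies $\eqref{eq:ComparisonMap}\circ \ch^{\rel}_{n,i}$ with the map on $\pi_i(F_r^\flat(A))$ induced by pairing bisimplices against $\ch^{\mathrm{Kar}}_{n,(r)} = (\omega_0,\omega_1)$ via the Dupont-style formalism.

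It remains to match this last pairing with the explicit formula \eqref{eq:FastKaroubisRelChern}. A diagonal $i$-simplex of $F_r^\flat(A)$ is an $(i,i)$-bisimplex, and on the diagonal the ``internal'' $\widehat\Delta^i_K$ attached to a simplicial form $\omega_1\in D^{2n}(E_i\widehat\GL_{r,K})$ is identified with the ``external'' $\widehat\Delta^i_K$ parametrizing $\sigma_1\colon \widehat X_K\times \widehat\Delta^i_K \to E_i\widehat\GL_{r,K}$; this identification is what produces the pullback $(\sigma_1,\pr_{\Delta^i})^*\omega_1$ appearing in Karoubi's formula. The passage from the bisimplicial total complex to the single integral $\int_{\Delta^i}$ then comes from the shuffle formula \eqref{mar2007} and Dupont's $I$. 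I expect the principal obstacle to be the bookkeeping of signs: the artificial $(-1)^{q(q-1)/2}$ built into \eqref{dez0702}, the shuffle signs $\sgn(\mu,\nu)$, and the Stokes contributions \eqref{mar2006} must collectively collapse to Karoubi's unadorned expression, and the contribution coming from $\omega_0$ must be checked to match the $\Fil^n$-component of the mapping fibre under \eqref{eq:ComparisonMap}. Once these signs are reconciled, the theorem follows.
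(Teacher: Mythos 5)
Your plan is essentially the route the paper takes: reduce to $F_r^\flat$ via Lemma \ref{mar2003}, transport the two universal classes to one another via Dupont's integration map $I$, and then reconcile the abstract pairing formalism with Karoubi's explicit integral formula through the Eilenberg--Zilber/shuffle correspondence. One point to be careful about in the framing: the complex $D^{*}(Y_\dot)$ of simplicial forms is \emph{not} a regulator datum in the sense of \ref{nov1205} (it is not the total complex of a (co)simplicial complex of the form $\Gamma_1(a(Y_\dot))$), so Lemma \ref{nov1301} cannot on its own produce Karoubi's map; you realize this when you write ``it remains to match this last pairing with the explicit formula \eqref{eq:FastKaroubisRelChern}.'' That matching is exactly where the substantive content lies, and where the paper invests essentially all of the effort: it proves the commutativity of the diagram \eqref{eq:diagKar} by first computing the explicit quasi-inverse of $\Omega^{*}(\widehat X_{K}) \to \Omega^{*}(\widehat X_{K}\times\widehat\Delta^\dot_K)$ via fibrewise integration (Lemma \ref{lemma:ExplQuInv}, including the $(-1)^{p(p-1)/2}$ sign), and then verifying the identity \eqref{nov1903} using the shuffle decomposition of $\Delta^{i-p}\times\Delta^p$ and the fact that $\omega_1$ is a \emph{simplicial} form, so that the ``internal'' and ``external'' $\widehat\Delta^i_K$ can be identified as you anticipate. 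A second small caveat: the chain \eqref{eq:KarCompMap3} from $\Fil^nR\Gamma_\dR$ to $\Omega^{\geq n}$ is a zig-zag (the arrow \eqref{eq:KarCompMap2} points the wrong way), whereas Lemma \ref{nov1301} as stated handles one-directional natural transformations; one either routes through the intermediate Godement complex and uses that the wrong-way arrow is a quasi-isomorphism, or, as the paper does, simply works directly with the induced map on $\Hom$-complexes rather than formally invoking \ref{nov1301}. Neither of these is a serious obstruction, but the sign and shuffle verification you defer is precisely the core of the argument and cannot be dispensed with briefly.
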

\begin{proof}
Let $X=\Spec(A)$.
Using the quasi-isomorphism $\Z F^\flat(A) \xrightarrow{\qis} \Z F(A)$ we can describe the composition
\begin{equation}\label{eq:RelChernNaive0}
K_{i}^{\rel}(X) \xrightarrow{\ch_{n,i}^{\rel}} H^{2n-i}_{\rel}(X, n) \xrightarrow{\eqref{eq:ComparisonMap}} H^{2n-i-1}(\Omega^{<n}(\widehat X_{K}))
\end{equation}
explicitly as follows: 
We have a natural map (cf. the construction in Lemma \ref{lemma:ComparisonMap})
\begin{multline*}
H^{2n}\left(\MF\big(\Fil^{n}R\Gamma_{\dR}(B_{\dot}\GL_{r,K}/K) \to R\Gamma_{\dR}(E_{\dot}\widehat\GL_{r,K})\big)\right) \to \\
H^{2n}\left(\MF\big(\Omega^{\geq n}(B_{\dot}\widehat\GL_{r,K}) \to \Omega^{*}(E_{\dot}\widehat\GL_{r,K})\big)\right),
\end{multline*}
and we denote the image of $\ch_{n,(r)}^{\rel}$ (see \eqref{nov1401}) under this map by 
\[
\widetilde{\ch}_{n,(r)}^{\rel} \in H^{2n}\left(\MF\big(\Omega^{\geq n}(B_\dot\widehat\GL_{r,K}) \to \Omega^*(E_\dot\widehat\GL_{r,K})\big)\right).
\]
As in \eqref{nov0801} we have a map 
\begin{equation}\label{nov1603}
 H_i(\Tot\Z F^\flat_{r}(A)) \to \Hom\binom{H^{2n}(\MF(\Omega^{\geq n}(B_{\dot}\widehat\GL_{r,K}) \to \Omega^{*}(E_{\dot}\widehat\GL_{r,K}))),\qquad}{\qquad H^{2n-i}(\MF(\Omega^{\geq n}(\widehat X_{K}) \to \Omega^{*}(\widehat X_{K}\times\widehat\Delta^{\dot}_{K})))}, 
\end{equation}
that we can compose with the evaluation $\mathrm{eval}_{\widetilde{\ch}_{n,(r)}^{\rel}}$ to get 
\begin{equation}\label{mar2001}
H_i(\Tot\Z F^\flat_{r}(A)) \to H^{2n-i}(\MF(\Omega^{\geq n}(\widehat X_{K}) \to \Omega^{*}(\widehat X_{K}\times\widehat\Delta^{\dot}_{K}))).
\end{equation}
Since $\widehat X_{K} \times \widehat\Delta^{p}_{K}$ is an affinoid dagger space, hence the higher cohomology of coherent sheaves on $\widehat X_{K} \times \widehat\Delta^{p}_{K}$ vanishes, the argument of Lemma \ref{nov1206} shows that the coaugmentation $\Omega^{*}(\widehat X_{K}) \to \Omega^{*}(\widehat X_{K}\times\widehat\Delta^{\dot}_{K})$ is a quasi-isomorphism. It induces an isomorphism
\begin{equation}\label{mar2002}
H^{2n-i}\left(\MF\big(\Omega^{\geq n}(\widehat X_{K}) \to \Omega^{*}(\widehat X_{K}\times\widehat\Delta^{\dot}_{K})\big)\right) \cong H^{2n-i-1}(\Omega^{<n}(\widehat X_{K})).
\end{equation}
Now \eqref{eq:RelChernNaive0} equals the composition of 
\begin{multline}
 K_{i}^{\rel}(X)=\pi_i(F(A)) \xrightarrow{\text{Hurewicz}} H_i(\Z\diag F(A)) \overset{\text{\ref{mar2003}}}{\cong} \\
 H_i(\Z\diag F^\flat(A)) \overset{\text{Eilenberg-Zilber}}{\cong}  H_i(\Tot \Z F^\flat(A)) \cong \varinjlim_{r} H_{i}(\Tot \Z F^{\flat}_{r}(A))
\end{multline}
with the map induced on the direct limit by \eqref{mar2001} and \eqref{mar2002}.

From the definitions of $\ch_{n,(r)}^\rel$ in \eqref{nov1401} and $\ch_{n,(r)}^{\mathrm{Kar}}$ in \eqref{eq:DefChnKar} it is clear that $\widetilde{\ch}_{n,(r)}^{\rel} = I(\ch_{n,(r)}^{\mathrm{Kar}})$ where we still denote by $I$ the isomorphism induced by $I$ (see \eqref{eq:DefI}) on the cohomology of the respective mapping fibres.

Thus we have to show that
\begin{equation}\label{eq:diagKar}
\xymatrix@C+0.5cm{
H_{i}(\Tot \Z F^\flat_{r}(A)) \ar[d]_-{\eqref{nov1603}} & H_{i}(\Z\diag F^\flat_{r}(A)) \ar[l]_{\text{Eilenberg-Zilber}}^{\cong} \ar[dd]^{\eqref{eq:FastKaroubisRelChern}}\\
\Hom\binom{H^{2n}(\MF(\Omega^{\geq n}(B_{\dot}\widehat\GL_{r,K}) \to \Omega^{*}(E_{\dot}\widehat\GL_{r,K}))),}{H^{2n-i}(\MF(\Omega^{\geq n}(\widehat X_{K}) \to \Omega^{*}(\widehat X_{K}\times\widehat\Delta^{\dot}_{K})))} 
	\ar[d]_{\mathrm{eval}_{I(\ch_{n}^{\mathrm{Kar}})}} \\
H^{2n-i}(\MF(\Omega^{\geq n}(\widehat X_{K}) \to \Omega^{*}(\widehat X_{K}\times\widehat\Delta^{\dot}_{K}))) \ar[r]_-{\cong}^-{\eqref{mar2002}}
& H^{2n-i-1}(\Omega^{<n}(\widehat X_{K})) 
}
\end{equation}
commutes. The following lemma shows that, on the level of complexes, \eqref{mar2002} is given by sending an element of the mapping fibre $(\eta_{0},\eta_{1})$ to the integral of $\eta_{1}$ along the simplex $\Delta^\dot$. 
\begin{lemma}\label{lemma:ExplQuInv}
A quasi-inverse of $\Omega^{*}(\widehat X_{K}) \xrightarrow{\qis} \Omega^{*}(\widehat X_{K}\times \widehat\Delta^{\dot}_{K})$ is given by integration over the standard simplices: 
\[
\Omega^{k+p}(\widehat X_{K}\times \widehat\Delta^{p}_{K})  \to  \Omega^{k}(\widehat X_{K}), \quad \omega \mapsto (-1)^{p(p-1)/2}\int_{\Delta^{p}} \omega 
\]
\end{lemma}
\begin{proof}
The integration map is obviously left inverse to the inclusion of $\Omega^{*}(\widehat X_{K})$ as the zeroth column in the double complex $\Omega^{*}(\widehat X_{K} \times \widehat\Delta^{\dot}_{K})$. So we only have to check that the integration indeed defines a morphism of complexes. This is straight forward using the following relative version of Stokes' formula
\[
\int_{\Delta^{p}} d\omega = (-1)^{p}d\int_{\Delta^{p}} \omega + \sum_{i=0}^{p} (-1)^{i} \int_{\Delta^{p-1}} (\del^{i})^{*}\omega
\]
and keeping in mind that the total differential on $\Omega^{q}(\widehat X_{K}\times\widehat\Delta^{p}_{K})$ is given by $\omega \mapsto  (-1)^{p}d\omega + \sum_{i=0}^{p}(-1)^{i}(\del^{i})^{*}\omega$.
\end{proof}
To show the commutativity of diagram \eqref{eq:diagKar}, we start with a bisimplex $\sigma \in F^\flat_{r}(A)_{p,i-p}$ so that the degree of $\sigma$ in $\Tot\Z F^\flat_{r}(A)$ is $i$. First we compute its image in 
$\Omega^{<n}(\widehat X_{K})$ going counterclockwise. As before, $\sigma$ gives a commutative diagram
\[
\xymatrix{
\widehat X_{K} \times \widehat\Delta_{K}^{i-p} \ar[d]_{\mathrm{proj.}} \ar[r]^-{\sigma_1} & E_{p}\widehat\GL_{r,K} \ar[d] \\
\widehat X_{K}\ar[r]^-{\sigma_0} & B_{p}\widehat\GL_{r,K}.
}
\]
The component of $I(\ch_{n,(r)}^{\mathrm{Kar}})$ in simplicial degree $p$ is given by $(\int_{\Delta^{p}} \omega_{0}, \int_{\Delta^{p}}\omega_{1})$ (cf. \eqref{mar2004}).
Hence the image of $\sigma$ in the lower left corner of \eqref{eq:diagKar}  is given by $(-1)^{(i-p)(i-p-1)/2}(\sigma_{0}^{*}\int_{\Delta^{p}}\omega_{0}, \sigma_{1}^{*}\int_{\Delta^{p}}\omega_{1}) \in \MF(\Omega^{\geq n}(\widehat X_{K}) \to \Omega^{*}(\widehat X_{K} \times \widehat\Delta^{i-p}_{K}))^{2n-p}$ and its image in $\Omega^{<n}(\widehat X_{K})$ by \[
\int_{\Delta^{i-p}} \sigma_{1}^{*}\int_{\Delta^{p}} \omega_{1} = \int_{\Delta^{i-p}\times\Delta^{p}} (\sigma_{1}\times\id_{\Delta^{p}})^{*}\omega_{1}.
\]
Note that there is no sign because the signs introduced via \eqref{nov1603} (cf. \eqref{dez0702}) and the integration map of Lemma \ref{lemma:ExplQuInv} cancel out.

Next we compute the image of $\sigma$ going through diagram \eqref{eq:diagKar} clockwise. To do this, we use the shuffle map which is an inverse of the Eilenberg-Zilber isomorphism (cf. \cite[8.5.4]{Weibel}):
Recall that a $(p,i-p)$ shuffle $(\mu,\nu)$ is a permutation $(\mu_{1}, \mu_{2}, \dots, \mu_{p}, \nu_{1}, \dots, \nu_{i-p})$
of $\{1, 2,\dots, i\}$ such that $\mu_{1} < \mu_{2} < \dots < \mu_{p}$ and $\nu_{1} < \dots < \nu_{i-p}$. 
It determines a map $\mu\colon [i] \to [p]$ in the category $\Delta$ given by $\mu=s^{\mu_{1}-1}\circ s^{\mu_{2}-1}\circ\cdots\circ s^{\mu_{p}-1}$ 
and similarly $\nu\colon [i] \to [i-p]$ such that $(\mu,\nu)\colon [i] \to [p]\times [i-p]$ is a non degenerate $i$-simplex of the simplicial set $\Delta^{p}\times\Delta^{i-p}$. All such simplices arise in this way. On any simplicial resp. cosimplicial object we have induced maps $\mu^{*}, \nu^{*}$, respectively $\mu_{*}, \nu_{*}$.

In particular, we have $\mu^{*}\colon E_{p}\widehat\GL_{r,K}\to E_{i}\widehat\GL_{r,K}$ resp. $B_{p}\widehat\GL_{r,K}\to B_{i}\widehat\GL_{r,K}$ and $\nu_{*}\colon \widehat\Delta^{i}_{K}\to \widehat\Delta^{i-p}_{K}$. Recall that we started with a $(p,i-p)$-simplex $\sigma$ in $F_{r}^\flat(A)$. Then $(\mu,\nu)^{*}\sigma$ is an $i$-simplex in $\diag F_{r}^{\flat}(A)$.
The shuffle map sends $\sigma$ to 
\begin{equation*}\label{nov1901}
\sum_{(\mu,\nu)} \sgn(\mu,\nu) (\mu,\nu)^{*}\sigma,
\end{equation*}
where the sum runs over all $(p,i-p)$-shuffles $(\mu,\nu)$.
Its  image under the Karoubi construction \eqref{eq:FastKaroubisRelChern} is then given by 
\[
\sum_{(\mu,\nu)} \sgn(\mu,\nu) \int_{\Delta^{i}} \big(((\mu,\nu)^{*}\sigma)_{1},\pr_{\Delta^{i}}\big)^{*} \omega_{1}|_{E_{i}\widehat\GL_{r,K}\times\widehat\Delta^{i}_{K}}.
\]
Thus, to complete the proof we have to show the equality
\begin{multline}\label{nov1903}
\int_{\Delta^{i-p}\times\Delta^{p}} (\sigma_{1}\times\id_{\Delta^{p}})^{*} \omega_{1}|_{E_{p}\widehat \GL_{K}\times\widehat\Delta^{p}_{K}}= \\
\sum_{(\mu,\nu)} \sgn(\mu,\nu) \int_{\Delta^{i}} \big(((\mu,\nu)^{*}\sigma)_{1},\pr_{\Delta^{i}}\big)^{*} \omega_{1}|_{E_{i}\widehat\GL_{K}\times\widehat\Delta^{i}_{K}}.
\end{multline}
Describing the map that appears in the right hand side of the formula more explicitly we have that for a $(p,i-p)$-shuffle $(\mu,\nu)$ the induced map $\widehat X_{K} \times \widehat\Delta^{i}_{K} \xrightarrow{((\mu,\nu)^{*}\sigma)_{1}} E_{i}\widehat\GL_{K}$ is given by
$\mu^{*}\circ \sigma_{1} \circ (\id_{\widehat X_{K}} \times \nu_{*})$.
Hence we have
a commutative diagram
\[
\xymatrix@C+2cm{
\widehat X_{K} \times \widehat\Delta^{i-p}_{K} \times \widehat\Delta^{p}_{K} \ar[r]^{\sigma_{1}\times\id_{\Delta^{p}}} & E_{p}\widehat\GL_{r,K}\times\widehat\Delta^{p}_{K}\\
\widehat X_{K} \times \widehat\Delta^{i-p}_{K} \times \widehat\Delta^{i}_{K} \ar[r]^{\sigma_{1}\times\id_{\Delta^{i}}} \ar[u]^{\id_{X} \times \id_{\Delta^{i-p}}\times \mu_{*}} & E_{p}\widehat\GL_{r,K}\times\widehat\Delta^{i}_{K} \ar[u]_{\id_{E_{p}\GL}\times \mu_{*}} \ar[d]^{\mu^{*}\times\id_{\Delta^{i}}} \\
\widehat X_{K}\times\widehat\Delta^{i}_{K} \ar[r]^{(((\mu,\nu)^{*}\sigma)_{1},\pr_{\Delta^{i}})} \ar[u]^{(\id_{X}\times \nu_{*},\pr_{\Delta^{i}})} & E_{i}\widehat\GL_{r,K}\times \widehat\Delta^{i}_{K}.
}
\]
The composition of the left two vertical arrows is $\id_{\widehat X_{K}}\times(\nu_{*}, \mu_{*})$. The map $(\nu_{*}, \mu_{*})$ is exactly the map corresponding to the shuffle $(\mu,\nu)$ in the standard decomposition of $\Delta^{i-p} \times \Delta^{p}$ into $i$-simplices. It follows (cf.~\eqref{mar2007}) that
\begin{equation}\label{nov1902}
\int_{\Delta^{i-p}\times \Delta^{p}} (?)=\sum_{(\mu,\nu)}\sgn(\mu,\nu)\int_{\Delta^{i}} (\id_{X}\times(\nu_{*},\mu_{*}))^{*}(?).
\end{equation}

Since $\omega_{1}\in D^{2n-1}(E_{\dot}\widehat\GL_{r,K})$ is a simplicial differential form we have $(\mu^{*}\times\id_{\Delta^{i}})^{*}\omega_{1}|_{E_{i}\widehat\GL_{r,K}\times\widehat\Delta_{K}^{i}} = (\id_{E_{p}\widehat\GL_{r,K}} \times \mu_{*})^{*}\omega_{1}|_{E_{p}\widehat\GL_{r,K}\times\widehat\Delta_{K}^{p}}$.
Hence we get
\[
(\id_{X}\times(\nu_{*},\mu_{*}))^{*}(\sigma_{1}\times\id_{\Delta^{p}})^{*}\omega_{1}\big|_{E_{p}\widehat\GL_{r,K}\times\Delta^{p}} 
= (((\mu,\nu)^{*}\sigma)_{1}, \pr_{\Delta^{i}})^{*}\omega_{1}\big|_{E_{i}\widehat\GL_{r,K}\times\Delta^{i}}.
\]
Setting $(?)= (\sigma_{1}\times\id_{\Delta^{p}})^{*}\omega_{1}|_{E_{p}\widehat\GL_{r,K}\times\Delta^{p}}$ in \eqref{nov1902} we get the desired equality \eqref{nov1903}.
\end{proof}

\bibliography{KaroubiSyn-revised}
\bibliographystyle{amsalpha}

\end{document}